\newtheorem{theorem}{Theorem}[section]
\newtheorem{proposition}[theorem]{Proposition}
\newtheorem{coro}[theorem]{Corollary}
\newtheorem{lemma}[theorem]{Lemma}
\newtheorem{rem}[theorem]{Remark}
\newtheorem{definition}[theorem]{Definition}
\theoremstyle{plain}
\renewcommand{\epsilon}{\varepsilon}
\newcommand{\eps}{\epsilon}
\newcommand{\diam}{\textrm{diam}}
\newcommand\N{\mathbb{N}}
\newcommand\R{\mathbb{R}}
\DeclareMathOperator{\osc}{Osc}
\newcommand{\abs}[1]{\left|#1\right|}
\DeclareMathOperator{\dist}{dist}
\DeclareMathOperator{\supp}{supp}
\DeclareMathOperator{\loc}{loc}
\DeclareMathOperator*{\esssup}{ess\,sup}
\newcommand{\Eins}{\ensuremath{\mathds{1}}}
\def\Xint#1{\mathchoice
   {\XXint\displaystyle\textstyle{#1}}%
   {\XXint\textstyle\scriptstyle{#1}}%
   {\XXint\scriptstyle\scriptscriptstyle{#1}}%
   {\XXint\scriptscriptstyle\scriptscriptstyle{#1}}%
   \!\int}
\def\XXint#1#2#3{{\setbox0=\hbox{$#1{#2#3}{\int}$}
     \vcenter{\hbox{$#2#3$}}\kern-.5\wd0}}
\def\aver#1{\Xint-_{#1}}
\let\original@addcontentsline\addcontentsline
\newcommand{\dummy@addcontentsline}[3]{}
\newcommand{\DeactivateToc}{\let\addcontentsline\dummy@addcontentsline}
\newcommand{\ActivateToc}{\let\addcontentsline\original@addcontentsline}
\newcommand\restr[2]{{
  \left.\kern-\nulldelimiterspace 
  #1 
  \vphantom{\big|} 
  \right|_{#2} 
  }}
\numberwithin{theorem}{section}
\numberwithin{equation}{section}
\title[Gaussian bounds through elliptic Moser]{Gaussian heat kernel bounds through elliptic Moser iteration}
\author[Fr\'ed\'eric Bernicot, Thierry Coulhon, Dorothee Frey]{Fr\'ed\'eric Bernicot, Thierry Coulhon, Dorothee Frey}
\address{Fr\'ed\'eric Bernicot, CNRS - Universit\'e de Nantes, Laboratoire Jean Leray, 2 rue de la Houssini\`ere, 44322 Nantes cedex 3, France\\}
\email{frederic.bernicot@univ-nantes.fr}
\address{Thierry Coulhon, Paris Sciences et Lettres Research University, 62 bis, rue Gay-Lussac, 75005 Paris, France}
\email{thierry.coulhon@univ-psl.fr}
\address{Dorothee Frey, Mathematical Sciences Institute, The Australian National University, Canberra ACT 0200, Australia}
\email{dorothee.frey@anu.edu.au}
\thanks{F. Bernicot's research is supported by ANR projects AFoMEN no. 2011-JS01-001-01 and HAB no. ANR-12-BS01-0013.\\
T. Coulhon's research was done while he was employed by the Australian National University. It was  supported by the Australian Research Council (ARC) grant DP  130101302.  \\
D. Frey's research is supported by the Australian Research Council (ARC) grants DP 110102488 and DP 120103692.}
\date{\today}
\begin{document}
\begin{abstract}  On  a  doubling metric measure space endowed with a ``carr\'e du champ'', we consider $L^p$ estimates $(G_p)$ of the gradient of the heat semigroup  and scale-invariant $L^p$ Poincar\'e inequalities $(P_p)$. We show that the combination of $(G_p)$ and  $(P_p)$  for $p\ge 2$ always implies two-sided Gaussian heat kernel bounds. The case $p=2$ is a famous theorem of Saloff-Coste, of which we give a shorter proof, without parabolic Moser iteration. We also give  a more direct proof of the  main result in \cite{HS}. This relies in particular on a new notion of $L^p$ H\"older regularity for a semigroup and on a  characterization of  $(P_2)$ in terms of harmonic functions.
\end{abstract}

\subjclass[2010]{58J35, 42B20}

\keywords{Heat kernel lower bounds, H\"older regularity of the heat semigroup, gradient estimates,  Poincar\'e inequalities, De Giorgi property, Riesz transform}

\maketitle

\begin{quote}
\footnotesize\tableofcontents
\end{quote}

\section{Introduction}\label{intro}

\subsection{The Dirichlet form setting}

The reader who does not care about generality but is satisfied by a wide range of interesting examples can skip this section and in the rest of the paper think of $M$ as a complete Riemannian manifold satisfying the volume doubling property and $\mathcal{L}$ its (nonnegative) Laplace-Beltrami operator.  
However, the more general setting we are about to present covers many more examples, see \cite{GSC}.

Let $M$ be a locally compact separable metrisable space equipped with a Borel measure $\mu$, finite on compact sets and strictly positive on any non-empty open set.
For $\Omega$ a measurable subset of $M$, we shall often denote $\mu\left(\Omega\right)$ by $\left|\Omega\right|$.

Let $\mathcal{L}$ be a non-negative self-adjoint operator on $L^2(M,\mu)$ with dense domain ${\mathcal D}\subset L^2(M,\mu)$. Denote by $\mathcal{E}$ the associated quadratic form 
$$\mathcal{E}(f,g)=\int_M f \mathcal{L}g\,d\mu,$$
for $f,g\in \mathcal{D}$,  and by  $\mathcal{F}$ its domain, which contains ${\mathcal D}$.
Assume that $\mathcal{E}$ is a strongly local and regular Dirichlet form (see \cite{FOT, GSC} for precise definitions).  As a consequence, there exists  an energy measure $d\Gamma$, that is a signed measure depending in a bilinear way on $f,g\in{\mathcal F}$ such that
$$\mathcal{E}(f,g)=\int_M d\Gamma(f,g)$$ for all $f,g\in\mathcal{F}$. 
A possible definition of $d\Gamma$  is through the formula
\begin{equation}\label{def}
 \int \varphi\, d\Gamma(f,f)  = {\mathcal E}(\varphi f, f) - \frac{1}{2} {\mathcal E}(\varphi,f^2),
 \end{equation}
valid for $f\in \mathcal{F}\cap L^\infty(M,\mu)$ and $\varphi \in \mathcal{F}\cap \mathcal{C}_0(M)$.
Here $\mathcal{C}_0(M)$ denotes the space of continuous functions on $M$ that vanish at infinity. 
According to the Beurling-Deny-Le Jan formula, the energy measure satisfies a Leibniz rule, namely 
\begin{equation}
 d\Gamma(fg,h) = fd\Gamma(g,h) + g d\Gamma(f,h), \label{eq:leibniz}
\end{equation}
for all $f,g\in \mathcal{F}\cap L^\infty(M,\mu)$ and $h\in\mathcal{F}$, see \cite[Section 3.2]{FOT}.
One can define a pseudo-distance $d$ associated with $\mathcal{E}$ by
 \begin{equation}\label{defd}
d(x,y):=\sup\{f(x)-f(y) ; f\in \mathcal{F}\cap \mathcal{C}_0(M) \mbox{ s.t. }d\Gamma(f,f)\le d\mu\}.
\end{equation}
Throughout the whole paper, we assume that the pseudo-distance $d$ separates points, is finite everywhere, continuous and defines the initial topology of $M$ (see \cite{ST1} and \cite[Subsection 2.2.3]{GSC} for details).

When we are in the above situation, we shall say that $(M,d,\mu, {\mathcal E})$ is a metric  measure (strongly local and regular)  Dirichlet space. Note that this terminology is slightly abusive, in the sense that in the above presentation $d$ follows from ${\mathcal E}$.

For all $x \in M$ and all $r>0$, denote by $B(x,r)$ the open ball for the metric $d$ with centre $x$ and radius $r$, and  by $V(x,r)$ its measure $|B(x,r)|$.  For a ball $B$ of radius $r$ and  $\lambda>0$, denote by $\lambda B$   the ball concentric  with $B$ and with radius $\lambda r$.  We sometimes denote by $r(B)$ the radius of the ball $B$. Finally, we will use $u\lesssim v$ to say that there exists a constant $C$ (independent of the important parameters) such that $u\leq Cv$ and $u\simeq v$ to say that $u\lesssim v$ and $v\lesssim u$. 

We shall assume  that   $(M,d,\mu)$ satisfies the volume doubling property, that is
  \begin{equation}\label{d}\tag{$V\!D$}
     V(x,2r)\lesssim  V(x,r),\quad \forall~x \in M,~r > 0.
    \end{equation}
It follows that there exists  $\nu>0$  such that
     \begin{equation*}\label{dnu}\tag{$V\!D_\nu$}
      V(x,r)\lesssim \left(\frac{r}{s}\right)^{\nu} V(x,s),\quad \forall~ x \in M,~r \ge s>0,
    \end{equation*}
which implies
     \begin{equation*}
      V(x,r)\lesssim \left(\frac{d(x,y)+r}{s}\right)^{\nu} V(y,s),\quad \forall~ x,y \in M, ~r \ge s>0.
    \end{equation*}
An easy consequence of \eqref{d} is that  balls with a non-empty intersection and comparable radii have comparable measures.

We shall say that $(M,d,\mu, {\mathcal E})$ is a doubling metric  measure   Dirichlet space if it is a metric measure space  endowed with a strongly local and regular Dirichlet form  and satisfying \eqref{d}.

\subsection{Heat kernel estimates}
Let  $(M,d,\mu, {\mathcal E})$ be a doubling metric  measure   Dirichlet space.
The Dirichlet form ${\mathcal E}$ gives rise to a strongly continuous  semigroup  $(e^{-t\mathcal{L}})_{t>0}$ of self-adjoint contractions on $L^2(M,\mu)$. In addition $(e^{-t\mathcal{L}})_{t>0}$  is submarkovian, that is
$0\leq e^{-t\mathcal{L}}f\leq 1$ if  $0\le f\leq 1$. It follows that the semigroup $(e^{-t\mathcal{L}})_{t>0}$ is uniformly  bounded on $L^p(M,\mu)$ for $p\in[1,+\infty]$. Also, $(e^{-t\mathcal{L}})_{t>0}$ is   bounded analytic on $L^p(M,\mu)$ for $1<p<+\infty$  (see \cite{topics}), which means  that  $(t\mathcal{L}e^{-t\mathcal{L}})_{t>0}$ is bounded on $L^p(M,\mu)$ uniformly in $t>0$.
Moreover, due to the doubling property \eqref{d}, the semigroup has the conservation property (see \cite{Grigo, ST1}),  that is
$$ e^{-t\mathcal{L}}1=1 ,\qquad \forall\, t>0.$$
In particular, the above assumptions rule out the case of a non-empty boundary with Dirichlet boundary conditions, see the comments
in \cite[pp. 13--14]{GSC}.

Under our assumptions,  according to \cite[Theorems 7.2.1-7.2.2]{FOT},  $e^{-t\mathcal{L}}$ has a  kernel, that is for all $t>0$ a measurable function $p_t:M\times M\to\R_+$ such that
$$e^{-t\mathcal{L}}f(x)=\int_Mp_t(x,y)f(y)\,d\mu(y), \quad \mbox{a.e. }x\in M.$$  We shall call $p_t$ the heat kernel associated with $\mathcal{L}$ (in fact with $(M,d,\mu,\mathcal{E}))$. Then $p_t(x,y)$ is nonnegative and symmetric in $x,y$  since $e^{-t\mathcal{L}}$ is positivity preserving and self-adjoint for all $t>0$.

One may  naturally ask for upper and lower estimates of $p_t$ (for upper estimates, see for instance the recent article \cite{BCS} and the many relevant references therein; for lower estimates, we will give more references below). A typical upper estimate is
  \begin{equation}\tag{$DU\!E$}
 p_{t}(x,y)\lesssim
\frac{1}{\sqrt{V(x,\sqrt{t})V(y,\sqrt{t})}}, \quad \forall~t>0,\,\mbox{a.e. }x,y\in
 M.\label{due}
\end{equation}
This estimate is called on-diagonal because if $p_t$ happens to be continuous then \eqref{due} can be rewritten as
  \begin{equation}
p_{t}(x,x)\lesssim
\frac{1}{V(x,\sqrt{t})}, \quad \forall~t>0,\,\forall\,x\in
 M.
\end{equation}

Under  \eqref{d}, \eqref{due} self-improves into a Gaussian upper estimate (see \cite[Theorem 1.1]{Gr1} for the Riemannian case,   \cite[Section 4.2]{CS} for a metric measure space setting):
\begin{equation}\tag{$U\!E$}
p_{t}(x,y)\lesssim
\frac{1}{V(x,\sqrt{t})}\exp
\left(-\frac{d^{2}(x,y)}{Ct}\right), \quad \forall~t>0,\, \mbox{a.e. }x,y\in
 M.\label{UE}
\end{equation}

The proof of this fact in \cite[Section 4.2]{CS} relies on the following  Davies-Gaffney estimate which was proved in our setting in \cite{ST2}:
 for all open subsets $E,F \subset M$,  $f\in L^2(M,\mu)$ supported in $E$, and $t>0$, 
\begin{equation} \left(\int_{F} |e^{-t\mathcal{L}} f |^2 d\mu \right)^{1/2}   \leq e^{- \frac{d^2(E,F)}{4t}} \left( \int_E |f|^2 d\mu \right)^{1/2}, \label{eq:DG0} \end{equation}
where $d(E,F)$ denotes the distance between $E$ and $F$. 
For more on the Davies-Gaffney estimate, see for instance \cite[Section 3]{CS}.

It is well-known on the contrary that  the matching Gaussian lower bound 
\begin{equation}\tag{$L\hspace{-.3mm}E$}
 p_{t}(x,y)\gtrsim \frac{1}{V(x,\sqrt{t})}\exp\left(-\frac{d^{2}(x,y)}{ct}\right) ,\quad \forall~t>0, \ \,\mbox{a.e. }x,y\in
 M\label{LE}
\end{equation}
does not always follow from \eqref{due}  (see \cite{BCF2}). Conversely, under \eqref{d}, \eqref{LE} implies \eqref{UE} (see \cite[Theorem 1.3]{Bou0} and \cite{CS3}).

It is not too difficult to prove in our situation that the conjunction of the upper and lower bounds \eqref{UE} and \eqref{LE} (that is, \eqref{d} and  \eqref{LE})  is equivalent to a uniform parabolic Harnack inequality, see \cite{FS} as well as \cite[Section 1]{BGK}.
One also knows (\cite[Theorem 2.32]{GSC}) that this Harnack inequality self-improves into  a H\"older regularity estimate for the heat kernel: there exists $\eta\in(0,1]$
\begin{equation}\label{ho}\tag{$H^\eta$}
\left|p_t(x,z)-p_t(y,z)\right| \lesssim \left(\frac{d(x,y)}{\sqrt{t}}\right)^\eta p_t(x,z),
\end{equation}
for all $t>0$ and a.e. $x,y,z\in
 M$ such that $d(x,y)\leq \sqrt{t}$.
 Note that, if \eqref{ho} holds for some $\eta\in (0,1]$, $p_t$ admits in particular a continuous version.

 What is also true, but much more difficult to prove (see  \cite{Gr0}, \cite{S}, \cite{SU}, \cite{parma}, \cite{SA}, \cite{ST3}, as well as \cite[Theorem 2.31]{GSC}) is that  $\eqref{UE}+\eqref{LE}$ is also equivalent  to \eqref{d} together with the following scale-invariant Poincar\'e inequality $(P_2)$:
 \begin{equation}\tag{$P_2$} 
 \left( \aver{B} \left| f - \aver{B} f d\mu \right|^2 d\mu \right)^{1/2} \lesssim r \left(\int_B d\Gamma(f,f)\right)^{1/2},  \label{P2}
\end{equation}
for every  $f\in {\mathcal D}$ and every ball $B\subset M$ with radius $r$. Here $\aver{B} f d\mu=\frac{1}{|B|} \int_B f d\mu$ denotes the average of $f$ on $B$.
A somewhat simplified proof of the main implication, namely the one from  $\eqref{d}+(P_2)$ to $\eqref{UE}+\eqref{LE}$,  has been given in \cite{HS}.
One of the outcomes of the present article will be to provide  a further simplification (see  the proof of Theorem \ref{rem:ep} below) as well as a short proof of the main result in \cite{HS} (see Theorem \ref{thm:2} below).
 \medskip

\subsection{$L^p$ Poincar\'e inequalities and gradient estimates}
The above scale-invariant Poincar\'e inequality $(P_2)$ quantifies the control of the oscillation of functions by the Dirichlet form. 
As it happens in many instances in analysis, it is important to have at hand a full scale of conditions for $p\in [1,+\infty]$, not just $p=2$. This requires to have, beyond the notion of $L^2$ norm of the gradient provided by the Dirichlet form,  a notion of $L^p$ norm of the gradient, hence a pointwise notion of    length of the gradient.

The relevant notion in our general setting is the one of
 ``carr\'e du champ'' (see for instance \cite{GSC} and the references therein). The Dirichlet form (or its energy measure) admits a ``carr\'e du champ'' if for all $f,g\in\mathcal{F}$ the energy measure $d\Gamma(f,g)$ is absolutely continuous with respect to $\mu$. Then the  density $\Upsilon(f,g) \in L^1(M,\mu)$ of $d\Gamma(f,g)$ is called the ``carr\'e du champ'' and satisfies the following inequality
 \begin{equation}   |\Upsilon(f,g)|^2 \leq \Upsilon(f,f) \Upsilon(g,g).  \label{eq:carre} \end{equation} In the sequel, when we assume that $(M,d,\mu,\mathcal{E})$ admits
 a ``carr\'e du champ'',  we shall abusively denote $\left[\Upsilon(f,f)\right]^{1/2}$ by $ |\nabla f|$.
 This has the advantage to stick to the more intuitive and classical Riemannian notation, but one should not forget that one works 
 in a much more general setting (see for instance \cite{GSC} for examples), and that one never uses differential calculus in the classical sense.
 
 We shall summarise this situation by saying that $(M,d,\mu, {\mathcal E})$ is a  metric  measure   Dirichlet space with a ``carr\'e du champ''.
 
We can now formulate the $L^p$ versions of the scale-invariant Poincar\'e inequalities, which may or may not be true and, contrary to the   H\"older regularity conditions for the heat semigroup,  do depend on $p\in[1,+\infty)$. More precisely, for $p\in[1,+\infty)$, one says that  $(P_p)$ holds if
\begin{equation}\tag{$P_p$}
 \left( \aver{B} \left| f - \aver{B} f d\mu \right|^p d\mu \right)^{1/p} \lesssim r \left(\aver{B} |\nabla f|^p d\mu \right)^{1/p}, \qquad \forall\, f\in {\mathcal F}, \label{Pp}
\end{equation}
where $B$ ranges over balls in $M$ of radius $r$.
Recall that $(P_p)$ is weaker and weaker as $p$ increases, that is $(P_p)$ implies $(P_q)$ for $q>p$, see for instance \cite{HaKo}, and the $p=\infty$ version is trivial in the Riemannian setting  (see however interesting developments for more general metric measure spaces in \cite{DCJS}).

On the Euclidean space, $(P_p)$ holds for all $p\in[1,+\infty]$. On  the connected sum of two copies of  $\R^n$, $(P_p)$ is valid if and only if $p>n$, as one can see by adapting the proof of  \cite[Example 4.2]{HaKo}. More interesting examples follow from \cite[Theorem 6.15]{HK}, see also \cite[Section 5]{DCJS1}. On conical manifolds with compact basis, $(P_p)$ holds at least for $p\ge 2$ (see \cite{CHQL}). A deep result from \cite{KZ} states if that $(P_p)$ holds for some $p\in(1,+\infty)$,
then $(P_{p-\eps})$ holds for some $\eps>0$. Finally the set $\{p\in [1,+\infty]; (P_p)\mbox{ holds on }M\}$ may be either $\{+\infty\}$, or $[1,+\infty]$, or of the form $(p_M,+\infty]$ for some $p_M>1$.

\bigskip

We will also consider estimates on the gradient (or ``carr\'e du champ") of the semigroup, which were introduced in  \cite{ACDH}, namely, for $p\in[1,+\infty]$, 
\begin{equation} \label{Gp}
\sup_{t>0} \|\sqrt{t}|\nabla e^{-t\mathcal{L}} |\|_{p\to p} <+\infty \tag{$G_p$},
\end{equation}
which is equivalent to the interpolation inequality
\begin{equation} \label{mult}  \||\nabla f|\|_p^2 \lesssim \|\mathcal{L} f\|_p\| f\|_p, \qquad \forall\,f\in {\mathcal D}
\end{equation}
(see \cite[Proposition 3.6]{CS2}).
 As far as examples are concerned, $(G_\infty)$ 
 holds on manifolds with non-negative Ricci curvature (\cite{LY}), Lie groups with polynomial volume growth (\cite{saloffpoly}),
 and co-compact covering manifolds  with polynomial growth deck transformation group (\cite{Du0}, \cite{Du}).
On the other hand,  conical manifolds with a compact basis  provide a family of doubling spaces $(M,d,\mu,\mathcal{E})$ with a ``carr\'e du champ'' satisfying \eqref{UE} and \eqref{LE} such that for every  $p_0>2$ there exist examples  in this family  where  $(G_p)$  holds for  $1<p<p_0$ and not for  $p\ge p_0$, see \cite{HQL1},\cite{HQL2},\cite{CHQL}.


\subsection{Our main results}

In the present paper, we are going to look at the combination $(G_p)+(P_p)$ for $1\le p\le +\infty$, and especially for $2\le p<+\infty$, on a  doubling metric  measure   Dirichlet space with a ``carr\'e du champ''. 
For $1\le p\le 2$, $(G_p)+(P_p)$ is nothing but $(P_p)$ and therefore is weaker and weaker as $p$ goes from $1$ to $2$.  On the contrary, for $2\le p\le +\infty$, since  $(G_p)$ is stronger as $p$ increases, whereas  $(P_p)$ is weaker, $(G_p)+(P_p)$ does not exhibit a priori any monotonicity. 

At one end of the range, $(G_\infty)+(P_\infty)$, at least in the Riemannian setting, is nothing but $(G_\infty)$, which does not seem to have consequences in itself. 
 However, it has been  shown in \cite[Corollary 2.2]{CS2} that, in presence of \eqref{d} and \eqref{due}, $(G_\infty)$ implies \eqref{LE}.
At the  other end of the range, for $p=2$, we already recalled the fundamental fact 
that $(G_2)+(P_2)=(P_2)$ implies $\eqref{UE}+\eqref{LE}$.

We are going to complete the  picture for $2<p<+\infty$ and by the same token  simplify  the proof of the case $p=2$.

 First,  relying on a variant of the Davies-Gaffney estimate that holds in our setting, we will prove in Section \ref{sec:ue}  that for $2\le p< +\infty$, the combination $(G_p)+(P_p)$  
 implies the upper estimate \eqref{due} and therefore \eqref{UE}.
 
 Then, in Section \ref{sec:oscillation}, we shall study the transition from \eqref{UE} to \eqref{LE} in the spirit of 
\cite{coupre} and \cite{Boutayeb}. The main novelty here will be a notion of $L^p$ $\eta$-H\"older regularity of the heat kernel: for $p\in[1,+\infty]$ and $\eta\in(0,1]$, we shall say that property \eqref{gp} holds if for every   $0<r\leq \sqrt{t}$, every pair of  concentric balls $B_r$, $B_{\sqrt{t}}$ with respective radii $r$ and $\sqrt{t}$, and every function $f\in L^{p}(M,\mu)$,
\begin{equation}
\left(\aver{B_r} \left| e^{-t\mathcal{L}}f - \aver{B_r}e^{-t\mathcal{L}}f d\mu \right|^{p} d\mu \right)^{1/p} \lesssim \left( \frac{r}{\sqrt{t}} \right)^\eta  \left|B_{\sqrt{t}}\right|^{-1/p} \|f\|_p
 \tag{$H_{p,p}^\eta$},
\end{equation}
with the obvious modification for $p=+\infty$.

Crucial to our approach is Theorem \ref{thm} below where we prove the equivalence, under \eqref{d} and \eqref{UE}, between  the Gaussian lower bound \eqref{LE} and the existence of some $p\in[1,+\infty)$ and $\eta>0$ such that \eqref{gp} holds, a property which turns out to be independent of $p\in[1,+\infty)$. As a consequence, we shall give a quick proof of the fact that  under \eqref{dnu},  $(G_p)+(P_p)$ for $p>\nu$ implies $\eqref{UE}+\eqref{LE}$, which had been proved in \cite[Thm. 5.2]{coupre} in the polynomial volume growth case.  We also recover the main result of  \cite{Boutayeb}. 
   Later, we shall see that the limitation  $p>\nu$ in the above implication is artificial. But the interest of this first approach is that it   does not require any kind of Moser iteration.
   
   In Section \ref{section:2}, we turn to the case $p=2$. We give a simple proof of the implication from $\eqref{d}+(P_2)$ to $\eqref{UE}+\eqref{LE}$; the only remaining non-trivial part is the implication from $\eqref{d}+(P_2)$ to the most classical De Giorgi property $(DG_2)$, which is recalled in Appendix \ref{AppDG}. With similar arguments, we also obtain a new proof of the result from \cite{HS} that the elliptic regularity  together with a scale-invariant local Sobolev  inequality imply the parabolic Harnack inequality (Theorem \ref{thm:2}).

Section \ref{section:DG} is devoted to considerations on $L^p$ versions of  De Giorgi property and  Caccioppoli inequality, which may be of independent interest and are used anyway in the sequel.
 
Then Section \ref{sec:poi} is devoted to the proof of Poincar\'e inequality $(P_2)$ under  $(G_p)+(P_p)$ for any $p\in(2,+\infty)$. It relies on on Proposition \ref{prop1} and on  Appendix \ref{App:ste} where a self-improving property of reverse H\"older estimates is spelled out.

Sections \ref{section:2} and \ref{sec:poi} together yield the following statement, which summarises
most of our results:
\begin{theorem} \label{th2} Let $(M,d,\mu,\mathcal{E})$ be a doubling metric measure Dirichlet space with a ``carr\'e du champ". Assume $(G_{p_0})$ and $(P_{p_0})$ for some $p_0\in [2,+\infty)$. Then 
 the heat kernel satisfies the two-sided Gaussian estimates \eqref{UE} and \eqref{LE}.
\end{theorem}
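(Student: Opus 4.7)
The plan is to split the conclusion into its upper and lower halves, and reduce everything to the classical Saloff-Coste theorem in the form proved (more simply) in Section \ref{section:2}. Concretely, the strategy is a three-step reduction:
\begin{enumerate}
\item show that $(G_{p_0})+(P_{p_0})$ implies the Gaussian upper bound \eqref{UE};
\item show that $(G_{p_0})+(P_{p_0})$ implies the $L^2$ Poincar\'e inequality $(P_2)$;
\item invoke the simplified Saloff-Coste theorem (Section \ref{section:2}) to obtain \eqref{LE} from \eqref{d}$+$$(P_2)$.
\end{enumerate}

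For Step~1, I would use the work announced in Section \ref{sec:ue}: starting from a Davies-Gaffney type off-diagonal estimate in our setting, the assumption $(G_{p_0})$ together with $(P_{p_0})$ (which is a genuine $L^{p_0}$ oscillation bound on functions by the ``carr\'e du champ") drives an elliptic Nash-type iteration delivering the on-diagonal bound \eqref{due}. Since \eqref{d} is in force, \eqref{due} self-improves to the full Gaussian upper bound \eqref{UE} via the standard argument of \cite[Section~4.2]{CS}. Step~3 is essentially a quotation: by Theorem~\ref{rem:ep} (Section \ref{section:2}), \eqref{d}$+$$(P_2)$ alone implies \eqref{UE}$+$\eqref{LE}. (Alternatively, one could feed \eqref{UE} obtained in Step~1 into the characterization Theorem~\ref{thm}, and verify the $L^p$ H\"older regularity $(H_{p,p}^\eta)$ at scale $\sqrt{t}$ using $(P_2)$ combined with Caccioppoli-type bounds from Section~\ref{section:DG}, thereby producing \eqref{LE} without any parabolic Moser iteration.)

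The genuinely delicate step is Step~2, the self-improvement from $(P_{p_0})$ to $(P_2)$ under the extra assumption $(G_{p_0})$. My proposal is to follow the route set up in Section \ref{sec:poi}: write $f = e^{-r^2\mathcal{L}}f + (I-e^{-r^2\mathcal{L}})f$ on a ball $B$ of radius $r$ and estimate the two pieces separately. On the smooth part $e^{-r^2\mathcal{L}}f$, the analyticity of the semigroup and $(G_{p_0})$ give pointwise bounds on $|\nabla e^{-r^2\mathcal{L}}f|$ and on its oscillation at scale $r$, yielding a reverse H\"older inequality of the form
\begin{equation*}
\left(\aver{B}|\nabla e^{-r^2\mathcal{L}}f|^{p_0}\,d\mu\right)^{1/p_0} \lesssim \left(\aver{2B}|\nabla e^{-r^2\mathcal{L}}f|^2\,d\mu\right)^{1/2} + \text{tails}.
\end{equation*}
The remainder $(I-e^{-r^2\mathcal{L}})f$ can be treated via the functional calculus identity $(I-e^{-r^2\mathcal{L}})f = \int_0^{r^2} \mathcal{L}e^{-s\mathcal{L}}f\,ds$ together with $(P_{p_0})$ applied on $B$. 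Combined, these give the reverse H\"older assumption needed in Proposition~\ref{prop1}; the Gehring-type self-improvement spelled out in Appendix~\ref{App:ste} then allows one to lower the exponent from $p_0$ down to $2$, giving $(P_2)$.

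The principal obstacle is precisely this interplay between $(G_{p_0})$ and $(P_{p_0})$ needed to set up the reverse H\"older inequality with the correct scaling in $r$ and with the right control of the error terms; in particular one must ensure that all tail terms arising from the off-diagonal decay of $e^{-r^2\mathcal{L}}$ are absorbed into the Gehring-type iteration rather than spoiling it. Once that is in place, Steps~1 and 3 are bookkeeping, and the chain $(G_{p_0})+(P_{p_0})\Rightarrow\eqref{UE}+(P_2)\Rightarrow\eqref{UE}+\eqref{LE}$ closes the proof.
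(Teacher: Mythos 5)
Your overall architecture matches the paper's exactly: $(G_{p_0})+(P_{p_0})\Rightarrow\eqref{UE}$ (Proposition \ref{prop:ppgp}), then $(G_{p_0})+(P_{p_0})\Rightarrow(P_2)$ for $p_0>2$ (Theorem \ref{thm:gpppp2}), then $\eqref{d}+(P_2)\Rightarrow\eqref{LE}$ (Theorem \ref{rem:ep}); Steps~1 and~3 are quotations and are fine. (Minor point: the proof of Proposition \ref{prop:ppgp} runs through $L^{\tilde p}$--$L^q$ off-diagonal bounds and the framework of \cite{BCS}, not a Nash iteration; cosmetic.)

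The genuine gap is in your implementation of Step~2. You propose to split $f = e^{-r^2\mathcal{L}}f + (I-e^{-r^2\mathcal{L}})f$ and derive a reverse H\"older inequality on $|\nabla e^{-r^2\mathcal{L}}f|$ from $(G_{p_0})$. But $(G_{p_0})$ is a global $L^{p_0}\to L^{p_0}$ bound and does not by itself yield a \emph{local} reverse H\"older on a ball of radius $r$; you need local control of $\nabla e^{-r^2\mathcal{L}}$ with absorbable off-diagonal tails, which is precisely the ``principal obstacle'' you flag but do not resolve. It is also unclear how $(P_{p_0})$ alone controls the $L^2$-oscillation of the remainder $(I-e^{-r^2\mathcal{L}})f=\int_0^{r^2}\mathcal{L}e^{-s\mathcal{L}}f\,ds$ on $B$ by $r\left(\aver{\lambda B}|\nabla f|^2 d\mu\right)^{1/2}$. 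Your reference to Proposition \ref{prop1} is moreover circular: that statement assumes $(P_2)$, which Step~2 is meant to produce. The paper sidesteps the semigroup decomposition altogether. It performs a \emph{harmonic replacement} (Lemma \ref{lem:LM}): given $B$, there is $u$ harmonic on $B$ with $f-u$ compactly supported in $B$ and with the exact energy comparison \eqref{eq:grad} furnished by the Dirichlet form. The reverse H\"older inequality fed into Theorem \ref{thm:imp} is then set up directly on $|\nabla f|$, combining the $L^{p_0}$ Caccioppoli inequality of Proposition \ref{prop:Caccioppoli} (proved under $(G_{p_0})$ by finite propagation speed and a spectral multiplier bound) with the Sobolev--Poincar\'e inequality $(P_{p_0,p_0-\eps})$; lowering the exponent to $2$ gives the weak $L^2$ Poincar\'e inequality for harmonic functions (Proposition \ref{prop}), which Theorem \ref{thm:poincare} transfers to all of $\mathcal{D}$ via the harmonic replacement, and Keith--Zhong (Theorem \ref{thm:wpoincare}) upgrades weak to strong. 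To make your semigroup decomposition work you would in effect have to prove a localised gradient bound for $e^{-r^2\mathcal{L}}$ equivalent to that Caccioppoli inequality --- which is the paper's actual key tool.
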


One can also formulate the above result by saying that  in our setting  $(G_{p_0})+(P_{p_0})$ for  $p_0>2$ implies $(P_{2})$. Finally $(G_{p})+(P_{p})$ is stronger and stronger as $p$ increases above $2$. Theorem \ref{th2} can be combined with the main result in \cite{ACDH} and yield results on Riesz transforms.

Since our results avoid parabolic Moser iteration, which is very hard to run directly in a discrete time setting (see \cite{Del}), they
are well suited to an extension to random walks on discrete graphs. As a matter of fact, our Appendix \ref{AppDG} is inspired by   \cite{AC}, but on the other hand our approach below gives a simpler proof of the main result in \cite{AC} by avoiding  the iteration step in \cite[Proposition 4.5]{AC}. 


\section{From Poincar\'e and gradient estimates to  heat kernel upper  bounds}
\label{sec:ue}

In this section we shall need a version of the Davies-Gaffney estimate \eqref{eq:DG0} which also includes the gradient, namely
\begin{equation} \left(\int_{F} |e^{-t\mathcal{L}} f |^2 d\mu \right)^{1/2}  + \sqrt{t} \left(\int_{F} |\nabla e^{-t\mathcal{L}}f|^2\,d\mu \right)^{1/2} \lesssim e^{-c \frac{d(E,F)^2}{t}} \left( \int_E |f|^2 d\mu \right)^{1/2}, \label{eq:DG} \end{equation}
for some $c>0$, all open subsets $E,F \subset M$,  $f\in L^2(M,\mu)$ supported in $E$, and $t>0$, $d(E,F)$ being the distance between $E$ and $F$. 
The proof of  this fact in \cite[Section 3.1]{ACDH} works in our setting of a Dirichlet space with a ``carr\'e du champ". Indeed, the proof relies on the following inequality: for $\varphi$ a non-negative cut-off function with  support $S$,
\begin{align} \label{eq:youpi}\int \varphi |\nabla e^{-t\mathcal{L}} f|^2 d\mu \leq &  \left(\int  |e^{-t\mathcal{L}} f|^2 |\nabla \varphi|^2 d\mu\right)^{1/2} \left(\int_S  |\nabla e^{-t\mathcal{L}} f|^2 d\mu\right)^{1/2}\\ \nonumber&+\int \varphi |e^{-t\mathcal{L}}f| | \mathcal{L}e^{-t\mathcal{L}} f| d\mu,  \end{align}
which follows from  \eqref{def}, \eqref{eq:leibniz}, and \eqref{eq:carre}. 

\begin{proposition} \label{prop:ppgp} Let $(M,d,\mu,\mathcal{E})$ be a metric measure Dirichlet space with a ``carr\'e du champ" satisfying  \eqref{d}.   Then the combination of  $(G_{p})$ with $(P_{p})$ for some $p\in[2,+\infty)$ implies \eqref{UE}.
\end{proposition}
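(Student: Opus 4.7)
Since \eqref{due} self-improves into \eqref{UE} under \eqref{d} and \eqref{eq:DG0} via \cite[Section 4.2]{CS}, it is enough to prove the on-diagonal upper bound \eqref{due}. My plan is to combine $(G_p)$, $(P_p)$, and the Davies--Gaffney estimate with gradient \eqref{eq:DG} to derive a localized $L^p\to L^\infty$ estimate on $e^{-t\mathcal{L}}$, and then extract \eqref{due} from it.

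\emph{Step 1: Localized $L^p$ off-diagonal gradient estimate.} I would first upgrade \eqref{eq:DG} to an $L^p$ version: for $f \in L^p(M)$ with $\supp f \subset E$ and any $F \subset M$,
\begin{equation*}
\sqrt{t}\Big(\int_F |\nabla e^{-t\mathcal{L}} f|^p \, d\mu\Big)^{1/p} \lesssim e^{-c \, d(E,F)^2/t}\,\|f\|_{L^p(E)}.
\end{equation*}
This would follow by interpolating the $L^2$-version \eqref{eq:DG} against the uniform global $L^p$-bound from $(G_p)$, in the spirit of the argument in \cite[Section 3.1]{ACDH}.

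\emph{Step 2: Localized $L^p$ oscillation estimate.} Given $x_0 \in M$ and $t > 0$, set $B = B(x_0,\sqrt{t})$ and decompose $f = \sum_{k\ge 0} f_k$ into dyadic annular pieces, with $f_k$ supported in $C_k = B(x_0, 2^{k+1}\sqrt{t}) \setminus B(x_0, 2^k\sqrt{t})$ for $k \ge 1$ and $C_0 = 2B$. Applying $(P_p)$ to $u_t = e^{-t\mathcal{L}}f$ on $B$, then using Step 1 on each $e^{-t\mathcal{L}}f_k$, I would obtain
\begin{equation*}
\Big(\aver{B}\Big|u_t - \aver{B} u_t\,d\mu\Big|^p d\mu\Big)^{1/p} \lesssim \sum_{k \ge 0} e^{-c \, 4^k}\, V(x_0, 2^k\sqrt{t})^{-1/p}\,\|f\|_{L^p(C_k)}.
\end{equation*}

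\emph{Step 3: Pointwise bound and \eqref{due}.} Combining Step 2 with the trivial bound $|\aver{B}u_t\,d\mu| \lesssim V(x_0,\sqrt{t})^{-1/p}\|f\|_p$ (H\"older's inequality plus uniform $L^p$-boundedness of $e^{-t\mathcal{L}}$) and iterating the oscillation estimate on nested balls around $x_0$ via the doubling property yields a localized $L^p \to L^\infty$ estimate
\begin{equation*}
|e^{-t\mathcal{L}}f(x_0)| \lesssim \sum_{k \ge 0} e^{-c \, 4^k}\, V(x_0, 2^k\sqrt{t})^{-1/p}\,\|f\|_{L^p(C_k)}.
\end{equation*}
For $p = 2$, self-adjointness and the semigroup property immediately give the $L^1 \to L^\infty$ bound encoded in \eqref{due}. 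For $p > 2$, a Coulhon-type extrapolation iteratively combining self-adjointness, the semigroup property, and the submarkovian $L^\infty$-bound reduces the exponent from $p$ down to $1$, yielding \eqref{due}.

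The main obstacle I anticipate is Step 1, i.e., the passage from the $L^2$- to the $L^p$-Davies--Gaffney estimate for the gradient: the interpolation with the $L^\infty$ submarkovian bound that works for $e^{-t\mathcal{L}}$ itself is unavailable for $\nabla e^{-t\mathcal{L}}$, so one must exploit $(G_p)$ intrinsically, either by revisiting the energy identity \eqref{eq:youpi} at the $L^p$ level or via a Stein-type chained interpolation.
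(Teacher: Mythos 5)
Your overall strategy — gradient Davies--Gaffney plus $(G_p)$ interpolation plus $(P_p)$, finished by duality/extrapolation — is in the same spirit as the paper's proof, but two steps contain genuine gaps.

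First, the endpoint problem in Step~1. Interpolating the $L^2$ off-diagonal estimate \eqref{eq:DG} (which carries exponential decay) against the global $(G_p)$ bound (which carries none) yields, by a Stein-type argument, $L^{\tilde p}$-$L^{\tilde p}$ off-diagonal estimates for $\sqrt t\, \nabla e^{-t\mathcal{L}}$ only for $\tilde p\in(2,p)$, with the Gaussian constant degenerating as $\tilde p\to p$. You cannot land at $p$ itself. Your closing paragraph correctly senses a difficulty here but misdiagnoses its nature: the fix is not to ``exploit $(G_p)$ intrinsically'' at the $L^p$ level, but to lower the exponent. The paper does this by invoking the Keith--Zhong self-improvement of $(P_p)$ to produce some $\tilde p\in(2,p)$ for which $(P_{\tilde p})$ holds, and then runs the whole argument at the level $\tilde p$, where the interpolation is clean. (For $p=2$ one takes $\tilde p=2$ directly and no interpolation is needed.)

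Second, and more seriously, the nested-ball iteration in Step~3 cannot close in the stated generality. To turn an $L^p$ oscillation bound on $B(x_0,\sqrt t)$ into a pointwise bound at $x_0$, you iterate over $B_i=B(x_0,2^{-i}\sqrt t)$: each application of $(P_p)$ gains a factor $2^{-i}\sqrt t$, but the $L^p$ average $\bigl(\aver{B_i}|\nabla u_t|^p\,d\mu\bigr)^{1/p}$ controlled only by the global $(G_p)$ bound grows like $|B_i|^{-1/p}\sim 2^{i\nu/p}$ under doubling. The resulting series $\sum_i 2^{i(\nu/p-1)}$ converges only when $p>\nu$. So your argument silently requires $p>\nu$ — which is precisely the hypothesis under which the paper later proves Theorem~\ref{nantes} via Lemma~\ref{lemma:morrey} — and thus misses the whole range $2\le p\le \nu$, including the crucial case $p=2$. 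The paper's proof of Proposition~\ref{prop:ppgp} avoids the nested-ball iteration altogether: it upgrades $(P_{\tilde p})$ to a Sobolev--Poincar\'e inequality $(P_{\tilde p,q})$ with $q>\tilde p$ (a gain in integrability, not a pointwise bound), combines this with the off-diagonal estimates to obtain $L^{\tilde p}\to L^q$ smoothing at a single scale $\sqrt t$, then sums over a same-scale covering of $M$, and finally closes via the duality/interpolation/extrapolation machinery of \cite{BCS} to reach $L^1\to L^2$, hence $(DUE)$. That machinery, rather than a pointwise telescoping over shrinking balls, is what makes the result hold uniformly for all $p\ge 2$.
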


\begin{proof} Assume first $2<p< +\infty$. From the self-improving property of $(P_{p})$  (see \cite{KZ}), there exists $\tilde p\in(2,p)$ such that $(P_{\tilde p})$ holds. Then, by interpolating between the $L^2$ Davies-Gaffney estimate  for $\nabla e^{-t\mathcal{L}}$ contained in \eqref{eq:DG} and $(G_{p})$, one obtains that for $t>0$ the operator $\sqrt{t} \nabla e^{-t\mathcal{L}}$ satisfies $L^{\tilde p}$-$L^{\tilde p}$ off-diagonal estimates at the scale $\sqrt{t}$. Similarly, by interpolating the uniform $L^\infty$ boundedness with \eqref{eq:DG}, one sees that the semigroup $e^{-t\mathcal{L}}$ also satisfies such estimates.
Namely, for some $c>0$,
\begin{equation} \|\sqrt{t}|\nabla e^{-t\mathcal{L}}| \|_{L^{\tilde p}(B) \to L^{\tilde p} (\tilde B)} + \| e^{-t\mathcal{L}} \|_{L^{\tilde p}(B) \to L^{\tilde p} (\tilde B)} \lesssim \exp\left(-c\frac{d^2(B,\tilde B)}{t}\right),\label{eq:od}
\end{equation}
for every $t>0$ and all balls $B,\tilde B$.
On the other hand, the $(P_{\tilde p})$ Poincar\'e inequality self-improves into a $(P_{{\tilde p},q})$ inequality for some $q>{\tilde p}$ (given by $q^{-1}={\tilde p}^{-1}-\nu^{-1}$ if ${\tilde p}<\nu$, $q=+\infty$ if ${\tilde p}<\nu$ and any $q>\nu$ if ${\tilde p}=\nu$, see \cite{FPW}). That is, for every ball $\tilde B$ of radius $\sqrt{t}$, one has
$$ \left(\aver{\tilde B} \left| f - \aver{\tilde B}  f d\mu \right|^q d\mu \right)^{1/q} \lesssim \sqrt{t}\left(\aver{\tilde B} \left| \nabla  f \right|^{\tilde p} d\mu \right)^{1/{\tilde p}}.$$
Hence
$$ \left(\aver{\tilde B} \left| e^{-t\mathcal{L}}f - \aver{\tilde B} e^{-t\mathcal{L}} f d\mu \right|^q d\mu \right)^{1/q} \lesssim \left(\aver{\tilde B} \left|\sqrt{t} \nabla e^{-t\mathcal{L}} f \right|^{\tilde p} d\mu \right)^{1/{\tilde p}},$$
for all $t>0$ and $f\in L^{\tilde p}(M,\mu)$.
It follows by Jensen's inequality that
$$  \left(\aver{\tilde B} \left| e^{-t\mathcal{L}}f \right|^q d\mu \right)^{1/q} \lesssim  \left(\aver{\tilde B} \left| e^{-t\mathcal{L}}f \right|^{\tilde p} d\mu \right)^{1/{\tilde p}} + \left(\aver{\tilde B} \left|\sqrt{t} \nabla e^{-t\mathcal{L}} f \right|^{\tilde p} d\mu \right)^{1/{\tilde p}}.$$
Then from \eqref{eq:od}, we deduce that for every pair of balls $B,\tilde B$ of radius $\sqrt{t}$ one has
\begin{equation} \| e^{-t\mathcal{L}} \|_{L^{\tilde p}(B) \to L^q (\tilde B)} \lesssim \exp\left(-c\frac{d^2(B,\tilde B)}{t}\right) |\tilde B|^{\frac{1}{q}-\frac{1}{{\tilde p}}}. \label{eq:od2} \end{equation}
We now use \cite{BCS}, and refer to it for more details. Set  $V_{r}(x):=V(x,r)$,  and denote abusively by $w$ the operator of multiplication by a function $w$. Using doubling,  \eqref{eq:od2} may be written as
$$ \| V_{\sqrt{t}}^{\frac{1}{{\tilde p}}-\frac{1}{q}} e^{-t\mathcal{L}} \|_{L^{\tilde p}(B) \to L^q (\tilde B)} \lesssim \exp\left(-c\frac{d^2(B,\tilde B)}{t}\right).$$
By the doubling property, we may sum this inequality over a covering of the whole space at the scale $\sqrt{t}$ and  deduce $(VE_{{\tilde p},q})$, which is 
$$\sup_{t>0} \| V_{\sqrt{t}}^{\frac{1}{{\tilde p}}-\frac{1}{q}} e^{-t\mathcal{L}} \|_{{\tilde p}\to q} <+\infty.$$
By duality,  one obtains $(EV_{q',{\tilde p}'})$,
that is
$$\sup_{t>0} \|  e^{-t\mathcal{L}}V_{\sqrt{t}}^{\frac{1}{{\tilde p}}-\frac{1}{q}} \|_{q'\to {\tilde p}'} <+\infty.$$  Then by interpolation \cite[Proposition 2.1.5]{BCS} between $(VE_{{\tilde p},q})$ and $(EV_{q',{\tilde p}'})$, one obtains $(VEV_{r,r',\frac{1}{r}-\frac{1}{2}})$,
that is
$$\sup_{t>0} \|  V_{\sqrt{t}}^{\frac{1}{r}-\frac{1}{2}}e^{-t\mathcal{L}}V_{\sqrt{t}}^{\frac{1}{r}-\frac{1}{2}} \|_{r\to r'} <+\infty,$$ where $1\le r<2$ is given by $\frac{1}{r}=\frac{1}{2}(\frac{1}{{\tilde p}}+\frac{1}{q'})=\frac{1}{2}+(\frac{1}{{\tilde p}}-\frac{1}{q})$. Then  $(EV_{r,2})$ holds by \cite[Remark 2.1.3]{BCS}.  Thanks to the $L^1$-uniform boundedness of the semigroup,  the extrapolation \cite[Proposition 4.1.9]{BCS}  yields $(EV_{1,2})$, hence \eqref{due} by \cite[Proposition 2.1.2]{BCS} and   \eqref{UE} by \cite[Section 4.2]{CS}.

Finally, if $p=2$,  one can run the above proof by setting directly ${\tilde p}=2$.  Alternatively, one can see by \cite[Section 5]{CS3} that $(P_2)$ and \eqref{d} imply the so-called Nash inequality $(N)$ and apply \cite[Theorem 1.2.1]{BCS}. 
\end{proof}


\begin{rem} The case $1\le p<2$ of Proposition $\ref{prop:ppgp}$  follows trivially from the case $p=2$.
\end{rem}

\begin{rem} One may avoid the use of the highly non-trivial result from {\rm \cite{KZ}} by  assuming directly  $(G_{p})$  and $(P_{q})$ for some $q\in(2,p)$.
\end{rem}


\section{$L^p$ H\"older regularity of the heat semigroup and heat kernel lower bounds} \label{sec:oscillation}

The following statement is valid in a more general setting than the one presented in Section \ref{intro} and used in Section \ref{sec:ue}: it is enough to consider a metric measure space $(M,d,\mu)$ satisfying $(VD)$, endowed with a  semigroup  $(e^{-t\mathcal{L}})_{t>0}$ acting on $L^p(M,\mu)$, $1\le p\le +\infty$.
For $1\le p\le +\infty$ let us write the $L^p$-oscillation for $u\in L^p_{loc}(M,\mu)$ and $B$ a ball:
$$ p\text{-}\osc_B(f) := \left(\aver{B} |f-\aver{B} f \,d\mu|^p \,d\mu\right)^{1/p},$$
if $p<+\infty$ and
$$ \infty\text{-}\osc_B(f) := \esssup_{B}| f - \aver{B}f\,d\mu |.$$

\begin{proposition} \label{proposition} Let $(M,d,\mu,\mathcal{L})$ as above. Let $p\in[1,+\infty]$ and $\eta\in(0,1]$.  Then the following two conditions are equivalent: 
\begin{enumerate}
\item for all   $0<r\leq \sqrt{t}$, every pair of concentric balls $B_r$, $B_{\sqrt{t}}$ with respective radii $r$ and $\sqrt{t}$, and every function $f\in L^{p}(M,\mu)$,
\begin{equation} \label{gp}
p\text{-}\osc_{B_r}(e^{-t\mathcal{L}}f) \lesssim \left( \frac{r}{\sqrt{t}} \right)^\eta  \left|B_{\sqrt{t}}\right|^{-1/p} \|f\|_p
 \tag{$H_{p,p}^\eta$}.
\end{equation}

\item for all   $0<r\leq \sqrt{t}$, every pair of concentric balls $B_r$, $B_{\sqrt{t}}$ with respective radii $r$ and $\sqrt{t}$, and every function $f\in L^{p}(M,\mu)$,
\begin{equation}
 \esssup_{x,y \in B_r} \left| e^{-t\mathcal{L}}f(x)-e^{-t\mathcal{L}}f(y) \right| \lesssim \left( \frac{r}{\sqrt{t}} \right)^{\eta}   \left|B_{\sqrt{t}}\right|^{-1/p} \|f\|_p. \label{eq:am} \tag{$H_{p,\infty}^\eta$}
\end{equation}
\end{enumerate}
\end{proposition}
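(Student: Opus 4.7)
The easy direction, $(H_{p,\infty}^\eta) \Rightarrow (H_{p,p}^\eta)$, is immediate from Jensen: for a.e.\ $x \in B_r$,
\[
\bigl|e^{-t\mathcal{L}}f(x) - \aver{B_r} e^{-t\mathcal{L}}f\,d\mu\bigr| \leq \aver{B_r}\bigl|e^{-t\mathcal{L}}f(x) - e^{-t\mathcal{L}}f(z)\bigr|\,d\mu(z) \leq \esssup_{u,v \in B_r}\bigl|e^{-t\mathcal{L}}f(u) - e^{-t\mathcal{L}}f(v)\bigr|,
\]
so the $L^p$ average over $x \in B_r$ of the left-hand side is bounded by the same pointwise quantity.

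For the converse $(H_{p,p}^\eta) \Rightarrow (H_{p,\infty}^\eta)$, I would run a Campanato-type dyadic telescoping. Write $g := e^{-t\mathcal{L}}f$, $B_r = B(x_0, r)$, $B_{\sqrt{t}} = B(x_0, \sqrt{t})$. Since $g \in L^1_{\loc}$ and the measure is doubling, Lebesgue's differentiation theorem gives $g(x) = \lim_{s \to 0^+} \aver{B(x,s)} g\,d\mu$ at a.e.\ point. For a Lebesgue point $x \in B_r$, consider the concentric chain $B_k^x := B(x, r/2^k)$ for $k \geq 0$. Each $B_k^x$ is concentric with $B(x,\sqrt{t})$ of radius $r/2^k \leq \sqrt{t}$, so $(H_{p,p}^\eta)$ applies and, together with doubling (using $d(x,x_0) < r \leq \sqrt{t}$ to swap $|B(x,\sqrt{t})|$ for $|B_{\sqrt{t}}|$), yields
\[
p\text{-}\osc_{B_k^x}(g) \lesssim 2^{-k\eta}(r/\sqrt{t})^\eta\,|B_{\sqrt{t}}|^{-1/p}\|f\|_p.
\]
Combining with $|\aver{B_{k+1}^x} g - \aver{B_k^x} g| \lesssim p\text{-}\osc_{B_k^x}(g)$ (from doubling applied to the inclusion $B_{k+1}^x \subset B_k^x$), the telescoping identity $g(x) - \aver{B(x,r)} g\,d\mu = \sum_{k \geq 0}\bigl(\aver{B_{k+1}^x} g\,d\mu - \aver{B_k^x} g\,d\mu\bigr)$ and summation of the geometric series produce
\[
\bigl|g(x) - \aver{B(x,r)} g\,d\mu\bigr| \lesssim (r/\sqrt{t})^\eta\,|B_{\sqrt{t}}|^{-1/p}\|f\|_p. \qquad (\ast)
\]

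It then remains to bridge $\aver{B(x,r)} g\,d\mu$ and $\aver{B(y,r)} g\,d\mu$ for Lebesgue points $x,y \in B_r$. Both balls are contained in $3B_r := B(x_0,3r)$ with comparable measures, so $|\aver{B(x,r)} g - \aver{B(y,r)} g| \lesssim p\text{-}\osc_{3B_r}(g)$. If $3r \leq \sqrt{t}$, I apply $(H_{p,p}^\eta)$ directly to $3B_r \subset B_{\sqrt{t}}$; if $3r > \sqrt{t}$, then $r \simeq \sqrt{t}$ and a bounded number of additional dyadic steps (each controlled by $(H_{p,p}^\eta)$) allow one to climb from scale $r$ up to $\sqrt{t}$. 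In either case the bridging term is $\lesssim (r/\sqrt{t})^\eta |B_{\sqrt{t}}|^{-1/p}\|f\|_p$. Combining $(\ast)$ at $x$ and at $y$ with this bridge and the triangle inequality gives the desired $(H_{p,\infty}^\eta)$; the case $p = +\infty$ is trivial because the two formulations of oscillation differ there by at most a factor~$2$. The main bookkeeping difficulty I foresee is managing the concentric-ball constraint in $(H_{p,p}^\eta)$ (where the larger ball must have radius exactly $\sqrt{t}$), particularly in the borderline regime $r \simeq \sqrt{t}$; once this is handled, the remainder is a textbook dyadic telescoping exploiting the geometric decay of the oscillations.
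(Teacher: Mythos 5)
Your proof is essentially the same as the paper's: the easy direction by averaging, and the converse by a Campanato/Meyers-type dyadic telescoping. The paper packages the telescoping into a standalone lemma (Lemma~\ref{lemma:meyers}, a Meyers-type characterization of H\"older continuity via $L^p$ oscillation decay on balls) and then feeds \eqref{gp} into it, whereas you inline the argument; this is a presentational rather than a substantive difference. The ``borderline'' issue you flag -- that when $r\simeq\sqrt{t}$ the bridging ball can exceed radius $\sqrt{t}$, putting it outside the direct reach of \eqref{gp} -- is in fact also left implicit in the paper (the balls $\tilde B\subset 6B_r$ in Lemma~\ref{lemma:meyers} can have radius up to $\sim 4r$); since in that regime $(r/\sqrt{t})^\eta\simeq 1$, the clean fix is to control $p\text{-}\osc$ over the offending large ball crudely by $\left(\aver{}|g|^p\,d\mu\right)^{1/p}\lesssim|B_{\sqrt{t}}|^{-1/p}\|e^{-t\mathcal{L}}f\|_p\lesssim|B_{\sqrt{t}}|^{-1/p}\|f\|_p$ using the $L^p$-boundedness of the semigroup, rather than by an unclear ``bounded number of additional dyadic steps.''
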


\begin{rem}\label{name} It is easy to see that \eqref{eq:am} is equivalent to the following condition, which justifies its name: for all  $0<r\leq \sqrt{t}$, every pair of concentric balls $B_r$, $B_{\sqrt{t}}$ with respective radii $r$ and $\sqrt{t}$, and every function $f\in L^{p}(M,\mu)$,
\begin{equation}
\infty\text{-}\osc_{B_r}(e^{-t\mathcal{L}}f) \lesssim \left( \frac{r}{\sqrt{t}} \right)^{\eta}  \left|B_{\sqrt{t}}\right|^{-1/p} \|f\|_p. \label{gp2}
 \end{equation}
\end{rem}

 Proposition \ref{proposition} is an easy consequence of a well-known  characterisation of H\"older continuous functions in terms of the growth of their $L^p$ oscillations on balls. This  result is due to Meyers {\rm\cite{meyers}} in  the Euclidean space, and its proof was later simplified, see e.g. {\rm \cite[III.1]{Gia0}}. It can be formulated in terms of embeddings of Morrey-Campanato spaces into H\"older spaces. The proof goes through in a doubling metric measure space setting (see {\rm \cite[Proposition 2.6]{AC2}} for an $L^2$ version).  
 We give a proof for the sake of completeness.
 
 \begin{lemma}\label{lemma:meyers} Let $(M,d,\mu)$ be a metric  measure    space satisfying \eqref{d}. Let $1\le p<+\infty$ and $\eta>0$.  Then  for every function $f\in L^p_{loc}(M,\mu)$ and every ball $B$ in $(M,d,\mu)$, 
\begin{equation*} \|f\|_{C^\eta(B)} := \esssup_{\genfrac{}{}{0pt}{}{x,y\in B}{x\neq y}} \frac{|f(x)-f(y)|}{d^\eta(x,y)} 
 \lesssim  \|f\|_{C^{\eta,p}(B)}:=\sup_{\tilde B \subset 6B}  \frac{p\text{-}\osc_{\tilde B} (f)}{r^\eta(\tilde{B})}.
\end{equation*}

\end{lemma}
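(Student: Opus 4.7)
The plan is to run the classical Meyers--Campanato telescoping argument adapted to the doubling metric measure setting. By the Lebesgue differentiation theorem for doubling spaces, almost every point $x\in M$ satisfies $f(x)=\lim_{r\to 0}\aver{B(x,r)}f\,d\mu$; since the left-hand side of the inequality is an essential supremum, it suffices to prove the pointwise H\"older bound at such Lebesgue points. Fix a ball $B$ of radius $R$, two Lebesgue points $x,y\in B$, and set $\rho:=d(x,y)\le 2R$.

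First I would control $|f(x)-\aver{B(x,\rho)}f\,d\mu|$ by a dyadic telescope. Setting $B_k^x:=B(x,\rho\,2^{-k})$ for $k\ge 0$, the Lebesgue point property gives
\begin{equation*}
|f(x)-\aver{B_0^x}f\,d\mu|\le\sum_{k\ge 0}\left|\aver{B_{k+1}^x}f\,d\mu-\aver{B_k^x}f\,d\mu\right|,
\end{equation*}
and each term is bounded, via Jensen's inequality together with the doubling estimate $|B_k^x|\lesssim|B_{k+1}^x|$, by a constant multiple of $p\text{-}\osc_{B_k^x}(f)$. Since $x\in B$ and $\rho\le 2R$, each $B_k^x$ lies in $B(x,2R)\subset 3B\subset 6B$, hence is admissible in the definition of $\|f\|_{C^{\eta,p}(B)}$; consequently each oscillation is $\lesssim(\rho 2^{-k})^\eta\|f\|_{C^{\eta,p}(B)}$ and the geometric sum yields $|f(x)-\aver{B(x,\rho)}f|\lesssim\rho^\eta\|f\|_{C^{\eta,p}(B)}$. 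The same bound holds with $x$ replaced by $y$.

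Next I would compare the two averages $\aver{B(x,\rho)}f$ and $\aver{B(y,\rho)}f$ through the common enclosing ball $\widetilde{B}:=B(x,2\rho)$, which contains both $B(x,\rho)$ and $B(y,\rho)$ and is itself contained in $5B\subset 6B$. Jensen and doubling once more give $|\aver{B(x,\rho)}f-\aver{\widetilde{B}}f|\lesssim p\text{-}\osc_{\widetilde{B}}(f)\lesssim\rho^\eta\|f\|_{C^{\eta,p}(B)}$, and symmetrically for $y$. Chaining these three bounds by the triangle inequality yields $|f(x)-f(y)|\lesssim d(x,y)^\eta\|f\|_{C^{\eta,p}(B)}$, and taking the essential supremum over $x,y\in B$ finishes the proof.

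The whole argument is routine; the only point requiring care is tracking the enclosing balls to ensure that every test ball stays inside $6B$, which is precisely why the supremum in $\|f\|_{C^{\eta,p}(B)}$ is taken over sub-balls of $6B$ rather than of $B$ itself (one needs room both for the dyadic shrinking balls based at $x$ and $y$ and for the ball $B(x,2\rho)$ used to compare their scale-$\rho$ averages). I do not expect any genuine obstacle beyond this bookkeeping, the invocation of Lebesgue differentiation in the doubling metric setting being classical.
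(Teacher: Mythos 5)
Your proof is correct and follows essentially the same telescoping Meyers--Campanato argument as the paper: reduction to Lebesgue points, a dyadic chain of shrinking balls at $x$ and $y$ bounded via Jensen and doubling, and a comparison of the two scale-$\rho$ averages through the common ball $B(x,2d(x,y))$, which is exactly the paper's $2B_0(x)$. The only difference is that you spell out the containments ($3B$, $5B\subset 6B$) slightly more explicitly than the paper does.
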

\begin{proof} Let $x,y\in B$ be  Lebesgue points for $f$. Let $B_i(x)=B(x,2^{-i}d(x,y))$, for $i\in\N$. Note that for all $i\in \N$,  $B_i(x)\subset B_0(x)\subset 3B$.
Write
\begin{eqnarray*}
\left|f(x) - \aver{B_0(x)} f d\mu \right|&\le&\sum_{i\geq 0}\left|\aver{B_i(x)} f d\mu - \aver{B_{i+1}(x)} f d\mu \right|\\
&\le&\sum_{i\geq 0}\aver{B_{i+1}(x)}\left|f-\aver{B_{i}(x)} f d\mu  \right|\,d\mu\\
&\le&\sum_{i\geq 0}\left(\aver{B_{i+1}(x)}\left|f-\aver{B_{i}(x)} f d\mu  \right|^p\,d\mu\right)^{1/p}\\
&\lesssim & \sum_{i\geq 0}p\text{-}\osc_{B_i(x)} (f),
\end{eqnarray*}
where the last inequality uses doubling.
It follows that
\begin{eqnarray*}
\left|f(x) - \aver{B_0(x)} f d\mu \right|
&\leq& \left(\sum_{i\geq 0} \left[r\left(B_i(x)\right)\right]^\eta\right) \|f\|_{C^{\eta,p}(B)}\\
&=& \left(\sum_{i\geq 0} 2^{-\eta i}d^\eta(x,y)\right) \|f\|_{C^{\eta,p}(B)}\\
&\lesssim& d^\eta(x,y) \|f\|_{C^{\eta,p}(\tilde{B})},
\end{eqnarray*}
as well as the similar estimate with the roles of $x,y$ exchanged.
Finally, since  $B_0(y),B_0(x) \subset 2B_0(x)$  with comparable measures by doubling,
\begin{eqnarray*}
\left|\aver{B_0(x)} f d\mu - \aver{B_0(y)} f d\mu \right|&\le& \left|\aver{B_0(x)} f d\mu - \aver{2B_0(x)} f d\mu \right|+\left|\aver{B_0(y)} f d\mu - \aver{2B_0(x)} f d\mu \right|\\&\lesssim& \aver{2B_0(x)} \left|f - \aver{2B_0(x)} f d\mu \right|\,d\mu\\&\leq& \left(\aver{2B_0(x)} \left|f - \aver{2B_0(x)} f d\mu \right|^p\,d\mu\right)^{1/p}\\
&= & p\text{-}\osc_{2B_0(x)} (f),
\end{eqnarray*}
hence
$$
\left|\aver{B_0(x)} f d\mu - \aver{B_0(y)} f d\mu \right|\leq d^\eta(x,y) \|f\|_{C^{\eta,p}(\tilde{B})}.
$$
The claim follows by writing
$$|f(x)-f(y)|\le \left|f(x) - \aver{B_0(x)} f d\mu \right|+\left|\aver{B_0(x)} f d\mu - \aver{B_0(y)} f d\mu \right|+\left|f(y) - \aver{B_0(y)} f d\mu \right|.$$
\end{proof}

\begin{proof}[{Proof of Proposition $\ref{proposition}$}] 
The  implication from  \eqref{eq:am} to \eqref{gp} is obvious by integration.  The case $p=+\infty$ of  the converse is similar to Remark \ref{name}.

Assume   \eqref{gp} for $1\le p<+\infty$. Let $t>0$, and  $B_r$ a ball of radius  $0<r\leq \sqrt{t}$.
From   Lemma \ref{lemma:meyers}, we deduce that for $f\in L^p(M,\mu)$, a.e. $x,y\in B_r$, 
\begin{equation}\label{eq:ouffff}
 |e^{-t\mathcal{L}}f(x)-e^{-t\mathcal{L}}f(y)|
    \lesssim r^{\eta}  \sup_{\tilde B \subset 6B_r}  \frac{p\text{-}\osc_{\tilde B} (e^{-t\mathcal{L}}f)}{r^\eta(\tilde{B})}. 
 \end{equation}
Now \eqref{gp} yields
\begin{equation*}
\frac{p\text{-}\osc_{\tilde B} (e^{-t\mathcal{L}}f)}{r^\eta(\tilde{B})}  \lesssim t^{-\eta/2} |\tilde B_{\sqrt{t}}|^{-1/p} \|f\|_p,
\end{equation*}
where $\tilde B_{\sqrt{t}}$ is the ball concentric to $\tilde B$ with radius $\sqrt{t}$. The balls $\tilde B_{\sqrt{t}}$ and $B_{\sqrt{t}}$ have the same radius 
and, if $\tilde B \subset 6B_r$, it follows  by doubling that $|\tilde B_{\sqrt{t}}|$ and $|B_{\sqrt{t}}|$ are comparable, hence
\begin{equation}\label{roger}
\sup_{\tilde B \subset 6B_r}\frac{p\text{-}\osc_{\tilde B} (e^{-t\mathcal{L}}f)}{r^\eta(\tilde{B})}  \lesssim t^{-\eta/2} |B_{\sqrt{t}}|^{-1/p} \|f\|_p, 
\end{equation}
 and \eqref{eq:ouffff} together with \eqref{roger} yield \eqref{eq:am}.
\end{proof}

Following some ideas in \cite[Theorem 3.1]{coupre}, we can now identify   \eqref{gp} as the property needed to pass from \eqref{UE} to  \eqref{LE}.

\begin{theorem} \label{thm} Let $(M,d,\mu,\mathcal{E})$  be a metric measure Dirichlet space satisfying \eqref{d} and the upper Gaussian estimate \eqref{UE}. If there exist $p\in[1,+\infty]$ and $\eta\in(0,1]$ such that \eqref{gp} is satisfied,
then  the  Gaussian lower bound \eqref{LE} holds. Conversely \eqref{LE}  implies \eqref{gp} for all $p\in[1,+\infty)$ and some $\eta\in(0,1]$.
\end{theorem}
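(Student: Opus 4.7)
My plan is to treat the two directions separately; the converse is short.

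\emph{$(LE)\Rightarrow(H_{p,p}^\eta)$.} Under $(VD)$, \eqref{UE} and \eqref{LE}, the Grigor'yan--Saloff-Coste pointwise H\"older estimate \eqref{ho} of the heat kernel is available. For $f\in L^p(M,\mu)$ with $1\le p<+\infty$, and $x,y\in B_r$ with $r\le\sqrt t$, I write the difference $e^{-t\mathcal{L}}f(x)-e^{-t\mathcal{L}}f(y)$ as an integral against $p_t(x,z)-p_t(y,z)$, apply \eqref{ho}, and then H\"older's inequality together with the easy $L^{p'}$ bound $\|p_t(x,\cdot)\|_{p'}\lesssim V(x,\sqrt t)^{-1/p}$ (a direct consequence of \eqref{UE} and doubling). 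This produces $(H_{p,\infty}^\eta)$, and Proposition~\ref{proposition} then gives $(H_{p,p}^\eta)$.

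\emph{$(UE)+(H_{p,p}^\eta)\Rightarrow(LE)$.} I would follow the classical three-step passage from on-diagonal to off-diagonal, in the spirit of \cite{coupre}. First, by the conservation property $e^{-t\mathcal{L}}1=1$ and the Gaussian tail of \eqref{UE}, the mass of $p_t(x,\cdot)$ contained in $B(x,A\sqrt t)$ is at least $1/2$ for $A$ large enough; Cauchy--Schwarz then yields the on-diagonal lower bound $p_t(x,x)\gtrsim V(x,\sqrt t)^{-1}$ (with the usual shift by a factor of $2$ in $t$).

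Next, upgrading $(H_{p,p}^\eta)$ to $(H_{p,\infty}^\eta)$ through Proposition~\ref{proposition}, I apply it at time $t/2$ to $f=p_{t/2}(\cdot,x)$, using $p_t(x,y)=e^{-(t/2)\mathcal{L}}f(y)$. Since \eqref{UE} and doubling give $\|p_{t/2}(\cdot,x)\|_p\lesssim V(x,\sqrt t)^{1/p-1}$ (with the natural reading for $p=+\infty$), the volume factor $V(x,\sqrt t)^{-1/p}$ appearing in $(H_{p,\infty}^\eta)$ combines with this to produce
\[
  |p_t(x,y)-p_t(x,x)|\lesssim (r/\sqrt t)^\eta V(x,\sqrt t)^{-1},\qquad d(x,y)\le r\le\sqrt t.
\]
Together with the on-diagonal lower bound, choosing $r=c\sqrt t$ with $c$ small enough yields the near-diagonal pointwise estimate $p_t(x,y)\gtrsim V(x,\sqrt t)^{-1}$ for $d(x,y)\le c\sqrt t$.

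I would then conclude by the classical chaining argument: for $d(x,y)=D$, set $N\simeq D^2/t+1$, use the semigroup identity
\[
  p_t(x,y)=\int_M\!\!\cdots\int_M p_{t/N}(x,z_1)p_{t/N}(z_1,z_2)\cdots p_{t/N}(z_{N-1},y)\,d\mu(z_1)\cdots d\mu(z_{N-1}),
\]
and restrict each integration to a small ball of radius comparable to $\sqrt{t/N}$ around intermediate points along a discrete chain from $x$ to $y$; the near-diagonal bound lower-bounds each factor and doubling controls the cumulative volume, giving \eqref{LE}. I expect the main obstacle to be the bookkeeping in the near-diagonal step, i.e.\ matching the $L^p$-norm of $p_{t/2}(\cdot,x)$ with the volume factor in $(H_{p,\infty}^\eta)$ so that the resulting lower bound comes out with the sharp weight $V(x,\sqrt t)^{-1}$ uniformly in $p\in[1,+\infty]$; the chaining itself is by now routine.
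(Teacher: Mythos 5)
Your proof is correct and follows essentially the same strategy as the paper's: self-improve $(H^\eta_{p,p})$ to $(H^\eta_{p,\infty})$ via Proposition~\ref{proposition}, feed $f = p_{t/2}(\cdot,x)$ into it, combine the $L^p$ bound on the kernel with the volume factor to obtain a pointwise H\"older estimate for $p_t$, then get the near-diagonal lower bound and chain. The paper itself stops at the analogue of your difference bound (its \eqref{eq:fff}) and refers to \cite[Theorem 3.1]{coupre} for the on-diagonal lower bound plus chaining, which you spell out explicitly.

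For the converse, your route is a mild variant of the paper's. You use the Gaussian-weighted H\"older estimate \eqref{ho}, which has $p_t(x,z)$ on the right, so a single application of H\"older's inequality against $\|p_t(x,\cdot)\|_{p'}\lesssim V(x,\sqrt t)^{-1/p}$ already gives $(H^\eta_{p,\infty})$ with a $p$-independent exponent $\eta$. The paper instead works with the flat $L^\infty$ form \eqref{harna}, which lacks the Gaussian weight, so it must split into $d(x,z)\le\sqrt t$ and $d(x,z)\ge\sqrt t$ to derive a genuine $L^\infty$ bound on $p_t(x,\cdot)-p_t(y,\cdot)$, and then interpolate against the trivial $L^1$ bound; this delivers the $p$-dependent exponent $\eta=\theta/(2p)$. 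Both paths rest on the same parabolic Harnack input; yours is slightly shorter and gives the marginally stronger ($p$-independent) $\eta$. One small point: \eqref{ho} is stated for $d(x,y)\le\sqrt t$, whereas $x,y\in B_r$ with $r\le\sqrt t$ only guarantees $d(x,y)\le 2\sqrt t$. When $\sqrt t<d(x,y)\le 2\sqrt t$, however, $(r/\sqrt t)^\eta\simeq 1$ and the required bound follows directly from $\|p_t(x,\cdot)\|_{p'}+\|p_t(y,\cdot)\|_{p'}\lesssim V(x,\sqrt t)^{-1/p}$; you should note this to close the argument.
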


\begin{rem} Let us emphasise two by-products of Theorem $\ref{thm}$:
 \begin{itemize} 
  \item  \eqref{LE} is equivalent to the existence of some $p\in[1,+\infty)$ and some $\eta\in(0,1]$ such that \eqref{gp} holds;
  \item  The property ``there exists $\eta>0$ such that $(H_{p,p}^\eta)$ holds'' is independent of $p\in[1,+\infty)$. 
 One can in fact prove that the property $(H_{p,p}^\eta)$ itself is $p$-independent of $p\in[1,+\infty]$, up to an arbitrarily small loss on $\eta$.  We will not pursue this here.
\end{itemize}
\end{rem}

\begin{proof}[{Proof of Theorem $\ref{thm}$}]
First assume \eqref{gp} for some $p\in[1,+\infty]$ and some $\eta>0$. By Proposition \ref{proposition}, we know that this estimate self-improves into \eqref{eq:am}.
Fix a point $z\in M$ and consider the function $f=p_t(\cdot,z)$. Then \eqref{eq:am} yields 
$$ | p_{2t}(x,z) - p_{2t}(y,z) | \lesssim \left( \frac{d(x,y)}{\sqrt{t}} \right)^{\eta}  \left|B_{\sqrt{t}}\right|^{-1/p} \|p_t(\cdot,z)\|_p, $$
uniformly for a.e. $x,y$ with $d(x,y)\leq \sqrt{t}$ and where $B_{\sqrt{t}}$ is any ball of radius $\sqrt{t}$ containing $x,y$ (in particular, $p_t$ is continuous and $p_t(x,x)$ has a meaning).
It follows  from \eqref{d} and \eqref{UE} that 
\begin{equation} \label{eq:hk-Lp-est}
\|p_t(\cdot,z)\|_p\lesssim \left[V(z,\sqrt{t})\right]^{\frac{1}{p}-1}
\end{equation}
and that 
$$V^{-1}(z,\sqrt{t})\lesssim p_{2t}(z,z).$$
For these two classical facts, see for instance \cite[Theorem 3.1]{coupre}.
Hence
\begin{align*}
 | p_{2t}(x,z) - p_{2t}(y,z) | & \lesssim \left( \frac{d(x,y)}{\sqrt{t}} \right)^{\eta}  \left(\frac{V(z,\sqrt{t})}{\left|B_{\sqrt{t}}\right|} \right)^ {1/p} V(z,\sqrt{t})^{-1} \\
  & \lesssim \left( \frac{d(x,y)}{\sqrt{t}} \right)^{\eta}  \left(\frac{V(z,\sqrt{t})}{\left|B_{\sqrt{t}}\right|} \right)^ {1/p} p_{2t}(z,z). 
\end{align*}

Note that this estimate is nothing but a slightly weaker form of  the classical H\"older estimate  \eqref{ho} from the introduction.

In particular, for $x=z$ and every $y\in B(x,\sqrt{t})$ we deduce that
\begin{align}
 | p_{2t}(x,x) - p_{2t}(y,x) | & \lesssim \left( \frac{d(x,y)}{\sqrt{t}} \right)^{\eta}  p_{2t}(x,x). \label{eq:fff}
\end{align}
It is well-known that \eqref{LE} follows (see for instance  \cite[Theorem 3.1]{coupre}).

Assume now \eqref{LE}. Since we have assumed   \eqref{UE},
it follows, through the equivalence of $\eqref{UE}+\eqref{LE}$ with the parabolic Harnack inequality (see  \cite[Proposition 3.2]{parma} or \cite[Theorems 2.31-2.32]{GSC}), that there exist $\theta\in(0,1)$ such that, for a.e. $x,y\in B_r$, $0\le r<\sqrt{t}$, and a.e. $z\in M$
\begin{equation}\label{harna}
 | p_{t}(x,z) - p_{t}(y,z) | \lesssim \frac{1}{\sqrt{V(z,\sqrt{t})V(x,\sqrt{t})}}\left( \frac{r}{\sqrt{t}} \right)^{\theta}\end{equation}
 (this is yet another version of \eqref{ho}).
On the other hand, \eqref{UE} and doubling imply that
\begin{equation}\label{gauss}
| p_{t}(x,z) - p_{t}(y,z) | \lesssim p_{t}(x,z) + p_{t}(y,z)  \lesssim \frac{1}{V(x,\sqrt{t})}\exp\left(-c\frac{d^2(x,z)}{\sqrt{t}}\right).
\end{equation}
If $d(x,z)\le \sqrt{t}$, $V(z,\sqrt{t})\simeq V(x,\sqrt{t})\simeq \left|B_{\sqrt{t}}\right|$ and  \eqref{harna} yields
$$| p_{t}(x,z) - p_{t}(y,z) | \lesssim \frac{1}{\left|B_{\sqrt{t}}\right|}\left( \frac{r}{\sqrt{t}} \right)^{\theta}.
$$
 If $d(x,z)\ge \sqrt{t}$, one multiplies the square roots of  \eqref{harna} and  \eqref{gauss} to obtain
 \begin{eqnarray*}
 | p_{t}(x,z) - p_{t}(y,z) | &\lesssim& \frac{1}{V^{1/4}(z,\sqrt{t})V^{3/4}(x,\sqrt{t})}\left( \frac{r}{\sqrt{t}} \right)^{\theta/2}\exp\left(-c\frac{d^2(x,z)}{\sqrt{t}}\right)\\
  & =&\frac{1}{V(x,\sqrt{t})}\left( \frac{r}{\sqrt{t}} \right)^{\theta/2}\left(\frac{V(x,\sqrt{t})}{V(z,\sqrt{t})}\right)^{1/4}\exp\left(-c\frac{d^2(x,z)}{\sqrt{t}}\right)\\
  & \lesssim &\frac{1}{\left|B_{\sqrt{t}}\right|}\left( \frac{r}{\sqrt{t}} \right)^{\theta/2},
 \end{eqnarray*}
where the last inequality uses again doubling.

Now we proceed as in \cite[Theorem 3.1]{coupre}.
We have just shown that
$$\|p_t(x,.)-p_t(y,.)\|_\infty\lesssim \frac{1}{\left|B_{\sqrt{t}}\right|}\left( \frac{r}{\sqrt{t}} \right)^{\theta/2}.$$
The heat semigroup being submarkovian,
$$\|p_t(x,.)-p_t(y,.)\|_1\leq 2.$$
It follows by H\"older inequality that for $1\le p<+\infty$
\begin{equation}\label{h}
\|p_t(x,.)-p_t(y,.)\|_{p'}\lesssim \left|B_{\sqrt{t}}\right|^{-1/p}\left( \frac{r}{\sqrt{t}} \right)^{\theta/2p},
\end{equation}
for a.e. $x,y\in B_r$, $0\le r<\sqrt{t}$.
Now 
\begin{eqnarray*}
|e^{-t\mathcal{L}}f(x)-e^{-t\mathcal{L}}f(y)|&\le& \int_M|p_t(x,z)-p_t(y,z)||f(z)|\,d\mu(z)\\
&\le &\|p_t(x,.)-p_t(y,.)\|_{p'}\|f\|_p,
\end{eqnarray*}
which together with \eqref{h} yields  \eqref{eq:am} with $\eta=\theta/2p$, hence  $(H_{p,p}^{\eta})$ by Proposition  \ref{proposition}.

\end{proof}

\begin{rem} \label{rem:sub} Proposition $\ref{proposition}$ and Theorem $\ref{thm}$ still hold in the context of sub-Gaussian estimates. Instead of \eqref{UE}, let us assume that the heat kernel satisfies for some $m>2$
\begin{equation}\tag{$U\!E_m$}
p_{t}(x,y)\lesssim
\frac{1}{V(x,t^{1/m})}\exp
\left(-\left(\frac{d(x,y)^m}{Ct}\right)^{1/(m-1)} \right),\ \forall~t>0,\,\mbox{a.e. }x,y\in
 M.\label{UEm} 
\end{equation} 
Then one can easily check that the above remains true by replacing everywhere the scaling factor $\sqrt{t}$ by $t^{1/m}$.
One could also consider the more general  heat kernel estimates from   {\rm \cite[Section 5]{HS}}, where the  equivalence with  matching  Harnack inequalities is proved, see also {\rm \cite{BGK}}. 
\end{rem}

 An alternative proof of  the second statement in Theorem $\ref{thm}$ can be given using {\rm \cite[Theorem 6]{Boutayeb}} instead of Proposition $\ref{proposition}$. We leave the details to the reader.
 Conversely, a natural follow-up of the end of the proof  of Theorem $\ref{thm}$ is to get the results from   \cite{Boutayeb}, that is the extension of \cite[Theorem 4.1]{coupre} to the doubling setting. There is nothing essentially new here, but we shall give a proof  of  \cite[Proposition 10]{Boutayeb} for the sake of completeness.  
 
We first need  to introduce the notion of reverse doubling. It is known (see \cite[Proposition 5.2]{GH}),  that, if  $M$ is unbounded, connected, and satisfies \eqref{dnu}, one has a  so-called reverse doubling volume property, namely there
exist
 $0<\nu'\leq \nu$ and $c>0$ such that,
for all $r\geq s>0$ and $x\in M$
\begin{equation*}
c\left(\frac{r}{s}\right)^{\nu'}\leq \frac{V(x,r)}{V(x,s)}.
\end{equation*}
Let us say that $(M,d,\mu)$ satisfies $(V\!D_{\nu,\nu'})$ if, for all $r\geq s>0$ and $x\in M$,
\begin{equation*}
c\left(\frac{r}{s}\right)^{\nu'}\leq \frac{V(x,r)}{V(x,s)}\le C\left(\frac{r}{s}\right)^{\nu}.
\end{equation*}

For the sake of simplicity we shall  set ourselves in the Gaussian case ($m=2$ in the notation of Remark  \ref{rem:sub}), but the general case is similar.

 \begin{theorem}\label{boubis} Let $(M,d,\mu,{\mathcal E})$  be a metric measure Dirichlet space satisfying $(V\!D_{\nu,\nu'})$ and the upper Gaussian estimate \eqref{UE}. Then \eqref{LE}  holds if and only if, for some (all) $p\in(1,+\infty)$, some  $\alpha>\frac{\nu}{p}$ and $\alpha'>\frac{\nu'}{p}$, 
$$
|f(x)-f(y)|\lesssim \frac{1}{ V^{1/p}(x,d(x,y))} \left(d^\alpha(x,y)\|\mathcal{L}^{\alpha/2}f\|_p+d^{\alpha'}(x,y)\|\mathcal{L}^{\alpha'/2}f\|_p\right), 
$$
$\forall\,f\in \mathcal{D}, \,x,y\in M$.
\end{theorem}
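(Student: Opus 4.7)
The plan is to exploit the characterisation from Theorem~\ref{thm} together with the subordination formula $\mathcal L^{-\beta}=\tfrac{1}{\Gamma(\beta)}\int_0^\infty t^{\beta-1}e^{-t\mathcal L}\,dt$.

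\medskip
\noindent\emph{Forward direction.} Assuming \eqref{LE}, Theorem~\ref{thm} furnishes some $\eta\in(0,1]$ with $(H^\eta_{p,\infty})$. Writing $f=\mathcal L^{-\alpha/2}\mathcal L^{\alpha/2}f$ via subordination and splitting the resulting integral at $T=d(x,y)^2$, I would handle the small- and large-time pieces separately. On $(0,T)$ (where $\sqrt t\le d(x,y)$), the pointwise bound $|e^{-t\mathcal L}\mathcal L^{\alpha/2}f(z)|\le\|p_t(z,\cdot)\|_{p'}\|\mathcal L^{\alpha/2}f\|_p\lesssim V(z,\sqrt t)^{-1/p}\|\mathcal L^{\alpha/2}f\|_p$ coming from \eqref{UE}, combined with the doubling inequality $V(x,\sqrt t)^{-1}\lesssim(d(x,y)/\sqrt t)^{\nu}V(x,d(x,y))^{-1}$ and the elementary $t$-integral, yields, provided $\alpha>\nu/p$, the first term $V(x,d(x,y))^{-1/p}d(x,y)^{\alpha}\|\mathcal L^{\alpha/2}f\|_p$ of the target estimate. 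On $(T,\infty)$ (where $\sqrt t\ge d(x,y)$), I would re-express $\mathcal L^{\alpha/2}f=\mathcal L^{(\alpha-\alpha')/2}\mathcal L^{\alpha'/2}f$ for a second parameter $\alpha'\in(\nu'/p,\eta+\nu'/p)$, commute the fractional power through half the heat kernel via analyticity ($\|\mathcal L^{(\alpha-\alpha')/2}e^{-t\mathcal L/2}\|_{p\to p}\lesssim t^{-(\alpha-\alpha')/2}$), apply $(H^\eta_{p,\infty})$ to the resulting $L^p$ function, and invoke reverse doubling $V(x,\sqrt t)^{-1}\lesssim(d(x,y)/\sqrt t)^{\nu'}V(x,d(x,y))^{-1}$. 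A routine computation, with integrability forced by $\alpha'<\eta+\nu'/p$, then produces the second term $V(x,d(x,y))^{-1/p}d(x,y)^{\alpha'}\|\mathcal L^{\alpha'/2}f\|_p$.

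\medskip
\noindent\emph{Converse direction.} Assuming the H\"older-type estimate with some $\alpha>\nu/p$ and $\alpha'>\nu'/p$, I would apply it to $f=e^{-t\mathcal L}g$ for a generic $g\in L^p(M,\mu)$. By the $L^p$-analyticity of the semigroup, $\|\mathcal L^{\beta/2}e^{-t\mathcal L}g\|_p\lesssim t^{-\beta/2}\|g\|_p$, giving
\[
|e^{-t\mathcal L}g(x)-e^{-t\mathcal L}g(y)|\lesssim V(x,d(x,y))^{-1/p}\|g\|_p\bigl[(d(x,y)/\sqrt t)^{\alpha}+(d(x,y)/\sqrt t)^{\alpha'}\bigr].
\]
For $d(x,y)\le\sqrt t$, the doubling inequality $V(x,d(x,y))^{-1/p}\lesssim(\sqrt t/d(x,y))^{\nu/p}V(x,\sqrt t)^{-1/p}$ converts this into a H\"older estimate whose useful exponent is $\alpha-\nu/p>0$, after interpolating with the trivial pointwise bound $\|e^{-t\mathcal L}g\|_\infty\lesssim V(x,\sqrt t)^{-1/p}\|g\|_p$ when necessary to suppress the $\alpha'$-term in the regime $\alpha'\le\nu/p$. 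This establishes $(H^{\eta'}_{p,\infty})$ for $\eta'=\alpha-\nu/p>0$, and the second part of Theorem~\ref{thm} closes the loop to yield \eqref{LE}.

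\medskip
\noindent\emph{Main obstacle.} The genuinely technical step is the large-$t$ part of the forward direction: one needs to validate the commutation of $\mathcal L^{(\alpha-\alpha')/2}$ past the semigroup inside the subordination integral and to secure the $(H^\eta_{p,\infty})$ bound for the resulting operator, with the correct scaling so that the factor $(d(x,y)/\sqrt t)^{\eta+\nu'/p}$ appears in the integrand and cancels against the $t^{\alpha'/2-1}$ weight upon integration. The converse is comparatively straightforward provided one handles, via interpolation with the pointwise bound, the mild nuisance that $\alpha'$ may lie in $(\nu'/p,\nu/p]$.
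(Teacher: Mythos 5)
Your \emph{forward} direction is essentially the same argument as the paper's. Both represent $f$ as an integral over $t>0$ of the heat semigroup acting on fractional powers of $\mathcal{L}$, split at $T=d(x,y)^2$, estimate the small-$t$ part via $\|p_t(x,\cdot)\|_{p'}\lesssim V(x,\sqrt t)^{-1/p}$ plus doubling (forcing $\alpha>\nu/p$), and estimate the large-$t$ part via the H\"older regularity furnished by Theorem~\ref{thm} together with reverse doubling and analyticity (forcing $\alpha'<\eta+\nu'/p$). The only cosmetic difference is that you use the subordination identity for $\mathcal{L}^{-\alpha/2}$ and then re-factor $\mathcal{L}^{\alpha/2}=\mathcal{L}^{(\alpha-\alpha')/2}\mathcal{L}^{\alpha'/2}$ for large $t$ --- which implicitly requires $\alpha>\alpha'$, whereas the paper uses $f=c(k)\int_0^\infty t^{k-1}\mathcal{L}^k e^{-t\mathcal{L}}f\,dt$ with $k>\max(\alpha/2,\alpha'/2)$ and peels off $\mathcal{L}^{\alpha/2}$ or $\mathcal{L}^{\alpha'/2}$ as needed, avoiding that constraint. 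Since the forward direction only has to produce \emph{some} admissible pair, this is fine, but the paper's choice is more robust; you would also need to justify the absolute convergence of the subordination integral at $t=\infty$ (the paper does this by noting $e^{-t\mathcal{L}}f\to 0$ in $L^p$).

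Your \emph{converse} direction, however, has a genuine gap which you flag but do not repair. After applying doubling you obtain, for $s=d(x,y)\le\sqrt t$,
\[
|e^{-t\mathcal{L}}g(x)-e^{-t\mathcal{L}}g(y)|\lesssim V(x,\sqrt t)^{-1/p}\|g\|_p\Bigl[(s/\sqrt t)^{\alpha-\nu/p}+(s/\sqrt t)^{\alpha'-\nu/p}\Bigr].
\]
The hypotheses only guarantee $\alpha'>\nu'/p$, and since $\nu'\le\nu$ the case $\alpha'\le\nu/p$ is not excluded; then the second exponent is $\le 0$ and blows up as $s\to 0$. You propose to fix this by ``interpolating with the trivial pointwise bound $\|e^{-t\mathcal L}g\|_\infty\lesssim V(x,\sqrt t)^{-1/p}\|g\|_p$,'' but this does not produce a decaying exponent: taking the minimum of the two bounds, or their geometric mean, still gives a quantity that tends to the constant bound (respectively diverges) as $s/\sqrt t\to 0$, so $\esssup_{x,y\in B_r}|\cdots|$ is not $O\bigl((r/\sqrt t)^{\eta'}\bigr)$ and one cannot derive $(H^{\eta'}_{p,\infty})$ nor the pointwise H\"older estimate for $p_{2t}$ that Theorem~\ref{thm} feeds on. The issue is structural: the only a~priori comparison $V(x,s)^{-1/p}\lesssim(\sqrt t/s)^{\nu/p}V(x,\sqrt t)^{-1/p}$ introduces the exponent $\nu/p$, not $\nu'/p$, and there is no upper bound at the $\nu'$-rate. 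The paper does not attempt this argument; it cites \cite[Theorem 6]{Boutayeb} for the converse without giving details, so you cannot simply read off a repair from the text. You should either give a genuine proof of the converse (which is not ``comparatively straightforward'' by this route) or, as the paper does, state that the converse is quoted from the literature.
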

 
 \begin{proof} Assume \eqref{LE}. Let $1<p<+\infty$,  $\alpha,\alpha'>0$ to be chosen later, and $k\in\N$ such that
 $k>\max\left(\frac{\alpha}{2},\frac{\alpha'}{2}\right)$. Let $f\in \mathcal{D}$.
 Thanks to \eqref{UE}  and to the fact that by reverse doubling $V(x,r)\to+\infty$ as $r\to+\infty$, $e^{-t\mathcal{L}}f\to 0$ in $L^2(M,\mu)$, as $t\to+\infty$
 (see \cite[Section 3.1.2]{Chen2} for details). Since  $e^{-t\mathcal{L}}f$ is bounded  in $L^1(M,\mu)$, $e^{-t\mathcal{L}}f\to 0$ in $L^p(M,\mu)$ by duality and interpolation. Thus one can 
 write
 $$f= c(k)\int_0^{+\infty}t^{k-1}\mathcal{L}^k e^{-t\mathcal{L}}f\,dt,$$
 hence
  \begin{eqnarray*}
|f(x)-f(y)|&\le& c(k)\int_0^{+\infty}t^{k-1}|\mathcal{L}^k e^{-t\mathcal{L}}f(x)-\mathcal{L}^k e^{-t\mathcal{L}}f(y)|\,dt\\
&=& c(k)\int_0^{+\infty}t^{k-1}|e^{-(t/2)\mathcal{L}}\mathcal{L}^k e^{-(t/2)\mathcal{L}}f(x)-e^{-(t/2)\mathcal{L}}\mathcal{L}^k e^{-(t/2)\mathcal{L}}f(y)|\,dt.
\end{eqnarray*}
Now for $r=d(x,y)$ and $0<\sqrt{t} \leq r$, we get from \eqref{eq:hk-Lp-est} and \eqref{dnu}
\begin{align*}
	& |e^{-(t/2)\mathcal{L}}\mathcal{L}^k e^{-(t/2)\mathcal{L}}f(x)-e^{-(t/2)\mathcal{L}}\mathcal{L}^k e^{-(t/2)\mathcal{L}}f(y)| \\
& \qquad \le \|p_t(x,.)-p_t(y,.)\|_{p'}\|\mathcal{L}^k e^{-(t/2)\mathcal{L}}f\|_p \\
& \qquad \le \left(\|p_t(x,.)\|_{p'}+\|p_t(y,.)\|_{p'}\right)\|\mathcal{L}^k e^{-(t/2)\mathcal{L}}f\|_p\\
& \qquad \lesssim V(x,\sqrt{t})^{-1/p} t^{-(k-\frac{\alpha}{2})}\|\mathcal{L}^{\alpha/2}f\|_p \\
& \qquad \lesssim V(x,r)^{-1/p} \left(\frac{r}{\sqrt{t}}\right)^{\nu/p} t^{-(k-\frac{\alpha}{2})}\|\mathcal{L}^{\alpha/2}f\|_p,
\end{align*}
where the last  inequality uses the analyticity of $(e^{-t\mathcal{L}})_{t>0}$ on $L^p(M,\mu)$.
For $0 \leq r <\sqrt{t}$, we can write as in the end of the proof of Theorem $\ref{thm}$,
  \begin{align*}& |e^{-(t/2)\mathcal{L}}\mathcal{L}^k e^{-(t/2)\mathcal{L}}f(x)-e^{-(t/2)\mathcal{L}}\mathcal{L}^k e^{-(t/2)\mathcal{L}}f(y)|\\&\le \|p_t(x,.)-p_t(y,.)\|_{p'}\|\mathcal{L}^k e^{-(t/2)\mathcal{L}}f\|_p
\lesssim \left|B_{\sqrt{t}}\right|^{-1/p}\left( \frac{r}{\sqrt{t}} \right)^{\theta/2p}t^{-(k-\frac{\alpha'}{2})}\|\mathcal{L}^{\alpha'/2}f\|_p. \end{align*}
Now reverse doubling yields
 \begin{align*} &|e^{-(t/2)\mathcal{L}}\mathcal{L}^k e^{-(t/2)\mathcal{L}}f(x)-e^{-(t/2)\mathcal{L}}\mathcal{L}^k e^{-(t/2)\mathcal{L}}f(y)| \\
 \lesssim &V(x,r)^{-1/p} \left( \frac{r}{\sqrt{t}} \right)^{\frac{\theta}{2p}+\frac{\nu'}{p}}t^{-(k-\frac{\alpha'}{2})}\|\mathcal{L}^{\alpha'/2}f\|_p.
  \end{align*}
Finally
 \begin{align*}
|f(x)-f(y)|
& \lesssim V(x,r)^{-1/p} \|\mathcal{L}^{\alpha/2}f\|_p \, r^{\nu/p} \int_0^{r^2} t^{k-1} t^{-\frac{\nu}{2p}-k+\frac{\alpha}{2}} \,dt \\
& \quad + V(x,r)^{-1/p} \|\mathcal{L}^{\alpha'/2}f\|_p \, r^{\frac{\theta}{2p}+\frac{\nu'}{p}} \int_{r^2}^{+\infty} t^{k-1} t^{-\frac{\theta}{4p}-\frac{\nu'}{2p}-k+\frac{\alpha'}{2}} \,dt.\end{align*}
The  above integrals converge if $\alpha>\frac{\nu}{p}$ and $\alpha'<\frac{\theta}{2p}+\frac{\nu'}{p}$, in which case one obtains
$$|f(x)-f(y)|
 \lesssim V(x,r)^{-1/p}\left( r^\alpha  \|\mathcal{L}^{\alpha/2}f\|_p+r^{\alpha'}  \|\mathcal{L}^{\alpha'/2}f\|_p\right).
 $$
One can choose any $\alpha>\frac{\nu}{p}$ and some $\alpha'>\frac{\nu'}{p}$.
The converse is easy, see \cite[Theorem 6]{Boutayeb}.
\end{proof}

\begin{rem} One can take $\alpha=\alpha'$ if $\nu=\nu'$, recovering in particular the polynomial volume growth case from {\rm \cite[Theorem 4.1]{coupre}}.
\end{rem}

In  \cite[Thm. 5.2]{coupre}, it is proved that if the volume growth is polynomial of exponent $\nu\ge 2$, then $(R_p)$ and $(P_{p})$ for $\nu<p<+\infty$ imply \eqref{LE}.
Using the equivalence between  $(G_p)$ and \eqref{mult}, it is easy to see that the same proof works with $(G_p)$ instead of $(R_p)$. Our next theorem   extends this result to the doubling case, and in addition its proof is more direct. 

\begin{theorem}\label{nantes} Let $(M,d,\mu, {\mathcal E})$ be a metric  measure   Dirichlet space with a ``carr\'e du champ'' satisfying \eqref{dnu}.
Assume $(G_p)$ and $(P_{p})$ for some $p\in(\nu,+\infty)$. Then  \eqref{LE} holds.
\end{theorem}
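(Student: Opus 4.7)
The plan is to combine three ingredients already assembled in the paper: Proposition \ref{prop:ppgp} for the upper bound, a Morrey-type consequence of $(P_p)$ in the presence of $(V\!D_\nu)$ when $p>\nu$, and Theorem \ref{thm} which converts an $L^p$ H\"older regularity estimate for the semigroup into the Gaussian lower bound.

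First I would dispose of the range $p\le 2$: since $(P_p)\Rightarrow(P_q)$ for $q>p$, the hypothesis $(P_p)$ implies $(P_2)$ and Saloff-Coste's theorem supplies $\eqref{UE}+\eqref{LE}$. So the interesting case is $p>2$, where Proposition \ref{prop:ppgp} already gives \eqref{UE}. In view of Theorem \ref{thm}, to obtain \eqref{LE} it then suffices to prove $(H_{p,p}^{\eta})$ for some $\eta>0$; I would aim for $\eta=1-\nu/p$, which is positive precisely because $p>\nu$.

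The key technical step is a Sobolev--Morrey type bound: for every $u\in\mathcal{F}$, every ball $B_R$, and every sub-ball $\tilde B\subset B_R$,
\begin{equation*}
p\text{-}\osc_{\tilde B}(u) \;\lesssim\; r(\tilde B)^{1-\nu/p}\, R^{\nu/p} \left(\aver{B_R} |\nabla u|^p\, d\mu\right)^{1/p}.
\end{equation*}
This is immediate: $(P_p)$ applied on $\tilde B$ controls $p\text{-}\osc_{\tilde B}(u)$ by $r(\tilde B)\bigl(\aver{\tilde B}|\nabla u|^p\bigr)^{1/p}$, and then $(V\!D_\nu)$ yields $\aver{\tilde B}|\nabla u|^p \le (|B_R|/|\tilde B|)\aver{B_R}|\nabla u|^p \lesssim (R/r(\tilde B))^{\nu} \aver{B_R}|\nabla u|^p$. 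Specialising to $u=e^{-t\mathcal{L}}f$, $R=\sqrt{t}$, and $\tilde B=B_r$ concentric with $B_{\sqrt{t}}$ (with $r\le\sqrt{t}$), and invoking $(G_p)$ in the form $\||\nabla e^{-t\mathcal{L}}f|\|_p\lesssim t^{-1/2}\|f\|_p$, I would obtain
\begin{equation*}
p\text{-}\osc_{B_r}\!\bigl(e^{-t\mathcal{L}}f\bigr) \;\lesssim\; r^{1-\nu/p}\, t^{\nu/(2p)}\,|B_{\sqrt{t}}|^{-1/p}\, t^{-1/2}\,\|f\|_p \;=\; \left(\frac{r}{\sqrt{t}}\right)^{1-\nu/p} |B_{\sqrt{t}}|^{-1/p}\,\|f\|_p,
\end{equation*}
which is exactly $(H_{p,p}^{\eta})$ with $\eta=1-\nu/p$.

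Given \eqref{UE} from Proposition \ref{prop:ppgp} and this $(H_{p,p}^{\eta})$, Theorem \ref{thm} delivers \eqref{LE}, completing the proof. Honestly, the main point is conceptual rather than technical: with the machinery already in the paper, the argument reduces to an elementary Morrey embedding, and the condition $p>\nu$ enters only to ensure that $1-\nu/p>0$ so that the oscillation estimate carries a genuine H\"older gain. This is the mechanism that bypasses parabolic Moser iteration and extends \cite[Thm.~5.2]{coupre} from the polynomial volume growth setting to the general doubling setting.
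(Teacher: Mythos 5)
Your argument is correct and follows essentially the same route as the paper's proof: apply $(P_p)$ to $e^{-t\mathcal{L}}f$ on $B_r$, compare averages on $B_r$ and $B_{\sqrt t}$ via $(VD_\nu)$, invoke $(G_p)$, read off $(H_{p,p}^{1-\nu/p})$, and conclude via Proposition \ref{prop:ppgp} and Theorem \ref{thm}. The preliminary dispatch of $p\le 2$ is harmless but unnecessary, since the same computation runs uniformly for all $p>\nu$.
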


\begin{proof} Replacing  $f$ with $e^{-t\mathcal{L}} f$ in $(P_p)$, we have, for every $t>0$ and every ball $B_r$ of radius $r>0$,$$ p\text{-}\osc_{B_r}(e^{-t\mathcal{L}}f)\lesssim  r  \left(\aver{B_r} | \nabla e^{-t\mathcal{L}} f |^p d\mu \right)^{1/p}.$$
If $B_{\sqrt{t}}$ is concentric with $B_r$ and $\sqrt{t}\ge r$,
$$ \left(\aver{B_r} | \nabla e^{-t\mathcal{L}} f |^p d\mu \right)^{1/p}  \lesssim \left( \frac{\left|B_{\sqrt{t}}\right|}{|B_r|}\right)^ {1/p} \left(\aver{B_{\sqrt{t}}} | \nabla e^{-t\mathcal{L}} f |^p d\mu \right)^{1/p} ,$$
hence by \eqref{dnu}
\begin{align*}
\left(\aver{B_r} | \nabla e^{-t\mathcal{L}} f |^p d\mu \right)^{1/p}  & \lesssim \left( \frac{\sqrt{t}}{r}\right)^ {\nu\over p}  \left|B_{\sqrt{t}}\right|^{-1/p}  \||\nabla e^{-t\mathcal{L}} f|\|_p\\& \lesssim \left( \frac{\sqrt{t}}{r}\right)^ {\nu\over p}  \left|B_{\sqrt{t}}\right|^{-1/p} \frac{ \| f\|_p}{\sqrt{t}},
\end{align*}
where the last inequality follows from  $(G_p)$. Gathering the two above estimates yields
$$p\text{-}\osc_{B_r}(e^{-t\mathcal{L}}f)\lesssim \left( \frac{r}{\sqrt{t}}\right)^{1- {\nu\over p}} \left|B_{\sqrt{t}}\right|^{-1/p} \| f\|_p,$$
that  is \eqref{gp} with $\eta=1-\frac{\nu}{p}\in(0,1)$ since  $p>\nu$. By Proposition \ref{prop:ppgp}, \eqref{UE} also holds. We conclude by applying Theorem \ref{thm}.
\end{proof}

\begin{rem} In the above statement, $(P_p)$ is necessary, since \eqref{LE} implies $(P_2)$, but $(G_p)$ is not, as  the example of conical manifolds shows (see {\rm \cite{CHQL}}).
\end{rem}

\begin{rem}\label{Ginfty} For $p=+\infty$, the above proof with the obvious modifications shows that $(G_\infty)$ together with  \eqref{UE} implies \eqref{LE}. This also follows from  {\rm \cite[Corollary 2.2]{CS2}}
and the fact that $(G_\infty)$ and \eqref{UE} imply the matching pointwise estimate of the gradient of the heat kernel.
\end{rem}

Let us give an alternative proof of Theorem \ref{nantes}, which is more direct, but does not shed the same light on the range $2\le p\le\nu$ (see Section \ref{section:2}
below) as the above one.

We shall start with a lemma which is close to \cite[Theorem 5.1]{HaKo} and to several statements  in  \cite{CLi} (for the polynomial volume growth case),
but we find it useful to formulate and prove it in the following simple and natural way, which is in fact inspired  by 
 \cite[Theorem 3.2]{HaKo}.  
 
 \begin{lemma}\label{lemma:morrey} Let $(M,d,\mu, {\mathcal E})$ be a metric  measure   Dirichlet space with a ``carr\'e du champ'' satisfying \eqref{dnu}.  Then $(P_p)$ for some $p>\nu$ implies the following Morrey inequality: for every function $f\in \mathcal{D}$ and almost every $x,y\in M$,
\begin{equation}\label{momo}
|f(x)-f(y)|\lesssim \frac{d(x,y)}{ V^{1/p}(x,d(x,y))} \||\nabla f|\|_p.
\end{equation}
\end{lemma}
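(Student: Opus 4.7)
The plan is to follow the standard Morrey-type argument from the theory of Sobolev spaces on metric measure spaces, which is essentially a telescoping sum over dyadically shrinking balls, exactly in the spirit of the proof of Lemma \ref{lemma:meyers} just given. The summability of the resulting series is precisely what forces the restriction $p>\nu$.

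First, fix $f\in\mathcal{D}$ and two Lebesgue points $x,y\in M$, set $r:=d(x,y)$, and define $B_i(x):=B(x,2^{-i}r)$ for $i\ge 0$, and analogously $B_i(y)$. The goal is to bound $|f(x)-f(y)|$ by the triangle inequality
$$|f(x)-f(y)|\le \left|f(x)-\aver{B_0(x)}f\,d\mu\right|+\left|\aver{B_0(x)}f\,d\mu-\aver{B_0(y)}f\,d\mu\right|+\left|f(y)-\aver{B_0(y)}f\,d\mu\right|$$
and control each term by $d(x,y)V(x,d(x,y))^{-1/p}\,\||\nabla f|\|_p$.

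For the first term, I would telescope exactly as in the proof of Lemma~\ref{lemma:meyers}, namely
$$\left|f(x)-\aver{B_0(x)}f\,d\mu\right|\le \sum_{i\ge 0}\left(\aver{B_{i+1}(x)}\left|f-\aver{B_i(x)}f\,d\mu\right|^p d\mu\right)^{1/p},$$
then apply $(P_p)$ on each $B_i(x)$ (using doubling to pass from the integral over $B_{i+1}(x)$ to the one over $B_i(x)$) to get each summand bounded by a constant times $2^{-i}r\,(\aver{B_i(x)}|\nabla f|^p\,d\mu)^{1/p}$. The crucial step is then to absorb the local $L^p$ norm of $|\nabla f|$ into the global one and use \eqref{dnu} in the form $V(x,r)\lesssim 2^{i\nu}V(x,2^{-i}r)$, which yields
$$\left(\aver{B_i(x)}|\nabla f|^p\,d\mu\right)^{1/p}\le V(x,2^{-i}r)^{-1/p}\||\nabla f|\|_p\lesssim 2^{i\nu/p}V(x,r)^{-1/p}\||\nabla f|\|_p.$$
Multiplying and summing gives the geometric series
$$\sum_{i\ge 0} 2^{-i(1-\nu/p)},$$
which converges precisely because $p>\nu$. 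The same bound applies with $y$ replacing $x$.

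For the middle term, observe that $B_0(x)$ and $B_0(y)$ are both contained in $2B_0(x)$ with comparable measures by doubling, so arguing as in the last display of the proof of Lemma~\ref{lemma:meyers} bounds this difference by $p\text{-}\osc_{2B_0(x)}(f)$, and a single application of $(P_p)$ together with the same doubling trick yields the required estimate. Putting the three contributions together gives \eqref{momo}. The only nontrivial input is the convergence of the geometric series, so there is no real obstacle beyond the hypothesis $p>\nu$; this is exactly where the restriction enters, and it cannot be relaxed in this Morrey-type framework.
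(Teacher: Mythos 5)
Your proof is correct and follows essentially the same route as the paper: telescoping over dyadic balls $B_i(x)$ as in Lemma~\ref{lemma:meyers}, applying $(P_p)$ on each ball, invoking $(V\!D_\nu)$ to bound $V(x,2^{-i}r)^{-1/p}\lesssim 2^{i\nu/p}V(x,r)^{-1/p}$, and summing the geometric series which converges precisely because $p>\nu$. The handling of the middle term via $p\text{-}\osc_{2B_0(x)}(f)$ and a single application of $(P_p)$ is also exactly what the paper does.
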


\begin{proof} Let $x,y$ be  Lebesgue points for $f$. Let $B_i(x)=B(x,2^{-i}d(x,y))$, for $i\in\N_0$.
As in Lemma \ref{lemma:meyers}, one has
$$
\left|f(x) - \aver{B_0(x)} f d\mu \right|\le\sum_{i\geq 0}p\text{-}\osc_{B_i(x)} (f),
$$
Then using  $(P_p)$ and \eqref{dnu} which yields $|B_0(x)|\lesssim 2^{i\nu} |B_i(x)|$, we can write
\begin{eqnarray*}
\left|f(x) - \aver{B_0(x)} f d\mu \right| &\lesssim & \sum_{i\geq 0} 2^{-i} d(x,y) \left(\aver{B_{i}(x)}\left|\nabla f\right|^p\,d\mu\right)^{1/p} \\
&\lesssim & \sum_{i\geq 0} 2^{-i} d(x,y) \left(\frac{2^{i\nu}}{ |B_0(x)|}\right)^{1/p} \left(\int_{B_{i}(x)}\left|\nabla f\right|^p\,d\mu\right)^{1/p} \\
&\lesssim & \left(\sum_{i\geq 0} 2^{-i(1-\frac{\nu}{p})}\right) d(x,y)|B_0(x)|^{-1/p} \left\| |\nabla f| \right\|_p\\
&\lesssim & d(x,y)|B_0(x)|^{-1/p} \left\| |\nabla f| \right\|_p,
\end{eqnarray*}
where we used $p>\nu$.
Similarly we have
\begin{eqnarray*}
\left|f(y) - \aver{B_0(y)} f d\mu \right| & \lesssim d(x,y)|B_0(y)|^{-1/p} \left\| |\nabla f| \right\|_p \\
& \lesssim d(x,y)|B_0(x)|^{-1/p} \left\| |\nabla f| \right\|_p,
\end{eqnarray*}
where $|B_0(x)| \simeq |B_0(y)|$ follows from doubling.
Finally,  as in Lemma \ref{lemma:meyers}, 
\begin{equation*}
\left|\aver{B_0(x)} f d\mu - \aver{B_0(y)} f d\mu \right|\leq \left(\aver{2B_0(x)} \left|f - \aver{2B_0(x)} f d\mu \right|^p\,d\mu\right)^{1/p},
\end{equation*}
and by 
 $(P_p)$
 \begin{equation*}
\left|\aver{B_0(x)} f d\mu - \aver{B_0(y)} f d\mu \right|\lesssim  d(x,y)|B_0(x)|^{-1/p} \left\| |\nabla f| \right\|_p.
\end{equation*}
The claim follows.
\end{proof}

 We can now derive  Theorem \ref{nantes} easily. 
 Replacing $f$  with $e^{-t\mathcal{L}}f$ in the conclusion of Lemma \ref{lemma:morrey}   and applying $(G_p)$ yields
\begin{equation*}
|e^{-t\mathcal{L}}f(x)-e^{-t\mathcal{L}}f(y)| \lesssim  \frac{d(x,y)} {V^ {1/p}(x,d(x,y))}  \| |\nabla e^{-t\mathcal{L}}f|\|_p \lesssim \frac{d(x,y)}{\sqrt{t}} \, \left[V(x,d(x,y))\right]^ {-1/p}   \|f\|_p,
\end{equation*}
hence by \eqref{dnu}
\begin{equation}\label{dis}
|e^{-t\mathcal{L}}f(x)-e^{-t\mathcal{L}}f(y)|\lesssim \left( \frac{d(x,y)}{\sqrt{t}} \right)^{1-\frac{\nu}{p}}|B_{\sqrt{t}}|^ {-1/p}  \|f\|_p,
\end{equation}
for every $f\in \mathcal{D}$ and every  ball $B_{\sqrt{t}}$ with  radius $\sqrt{t}\ge d(x,y)$ and containing $x$. 
Let now $B_r$ be concentric to $B_{\sqrt{t}}$ with  radius $\sqrt{t}$ such that $0<r\le \sqrt{t}$. Since  $p>\nu$, it follows from \eqref{dis} that
\begin{equation*}
\esssup_{x,y\in B_r}|e^{-t\mathcal{L}}f(x)-e^{-t\mathcal{L}}f(y)|\lesssim \left( \frac{r}{\sqrt{t}} \right)^{1-\frac{\nu}{p}}|B_{\sqrt{t}}|^ {-1/p}  \|f\|_p.
\end{equation*}
This is nothing but \eqref{eq:am}  with $\eta=1-\frac{\nu}{p}\in(0,1)$, and we conclude by using \eqref{UE} as in the beginning of the proof of Theorem \ref{thm}.

\bigskip

\section{Poincar\'e inequalities and heat kernel bounds: the $L^2$ theory} \label{section:2}

The so-called De Giorgi property  or Dirichlet property on the growth of the Dirichlet integral for  harmonic functions was introduced by De Giorgi in \cite{DeG}, for $\mathcal{L}$ a  second order divergence form differential operator with real coefficients on $\R^n$: there exists $\epsilon\in(0,1)$ such that for all $r\leq R$, every pair of concentric balls $B_r,B_R$ with radii $r,R$ and all functions $u\in W^{1,2}(\R^n)$ harmonic in $2B_R$, i.e. $\mathcal{L} u=0$ in $2B_R$, one has
\begin{equation} \left(\aver{B_r} |\nabla u|^2 d\mu \right)^{1/2} \lesssim \left(\frac{R}{r}\right)^\epsilon \left(\aver{B_R} |\nabla u|^2 d\mu\right)^{1/2}. \label{eq:dg} \end{equation}
The De Giorgi property was subsequently used in many works  and in various situations to prove H\"older regularity for  solutions of inhomogeneous elliptic equations and systems
(see for instance \cite{Gia}).

The idea  to look at the  heat equation as a Laplace equation where the RHS is a time derivative, and to deduce parabolic regularity results  from elliptic ones by using a non-homogeneous equivalent version of De Giorgi property was introduced in \cite{Aus} for  $\mathcal{L}$ a  second order operator
 in divergence form on $\R^n$.   In \cite{AC2}, the same ideas are applied in  a discrete geometric setting, and the role of Poincar\'e inequalities clearly appears to ensure the elliptic regularity and  the equivalence between the homogeneous  and non-homogeneous  versions of De Giorgi. This is the approach we will follow here,  while  taking  full advantage of Theorem \ref{thm}. We shall consider the following  non-homogeneous version of  De Giorgi property.  With the help of Lemma \ref{lemma-end} below, one shows that this formulation is a priori weaker than the one in  \cite[Proposition 4.4]{AC2}.  We shall see in the proof of Proposition  \ref{propDG} in Appendix \ref{AppDG} that under  \eqref{UE} it is equivalent to \eqref{eq:dg}.
 
 \begin{definition}[De Giorgi property] Let $(M,d,\mu, {\mathcal E})$ be a metric  measure   Dirichlet space with a ``carr\'e du champ'' and $\mathcal{L}$ the associated operator. We say that \eqref{DG2} holds if the following is satisfied: for all $r\leq R$, every pair of concentric balls $B_r,B_R$ with respective radii $r$ and $R$, and for every function $f\in {\mathcal D}$, one has
\begin{equation}
\left(\aver{B_r} |\nabla f|^2 d\mu \right)^{1/2} \lesssim \left(\frac{R}{r}\right)^\epsilon \left[\left(\aver{B_R} |\nabla f|^2 d\mu\right)^{1/2} + R \| \mathcal{L} f \|_{L^\infty(B_R)}\right].
\tag{$DG_{2,\epsilon}$} \label{DG2}
\end{equation}
We sometimes omit the parameter $\epsilon$, and write $(DG_2)$ if \eqref{DG2} is satisfied for some $\epsilon \in (0,1)$.
\end{definition}


Let us now state the counterpart of a result  of \cite{AC2} in the discrete setting. For the convenience of the reader, we give a proof  in Appendix \ref{AppDG}.

\begin{proposition} \label{propDG}
Let $(M,d,\mu, {\mathcal E})$  be  a doubling metric  measure   Dirichlet space with a ``carr\'e du champ''. Then $(P_2)$ implies $(DG_2)$. 
\end{proposition}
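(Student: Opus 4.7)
The plan is to derive $(DG_2)$ from $(P_2)$ by combining a Caccioppoli-type energy inequality with a Widman-style hole-filling iteration, tracking the contribution of $g := \mathcal{L} f$ along the way. I would first establish a Caccioppoli inequality: for every $f\in\mathcal{D}$, every non-negative cutoff $\chi\in\mathcal{F}\cap\mathcal{C}_0(M)$, and every constant $c\in\R$,
$$\int \chi^2 |\nabla f|^2 \, d\mu \;\lesssim\; \int (f-c)^2 |\nabla \chi|^2 \, d\mu \;+\; \left|\int \chi^2 (f-c)\,\mathcal{L} f \, d\mu\right|.$$
This follows by expanding $\mathcal{E}(\chi^2(f-c),f)$ via \eqref{def} and the Leibniz rule \eqref{eq:leibniz}, then using the Cauchy--Schwarz bound \eqref{eq:carre} on the resulting mixed gradient term. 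Applied to Lipschitz cutoffs adapted to concentric balls $B_\rho\subset B_{\rho'}$ with $\rho<\rho'\leq R$ and $|\nabla \chi|\lesssim (\rho'-\rho)^{-1}$, this gives a scale-invariant reverse-Poincar\'e inequality controlling $\int_{B_\rho}|\nabla f|^2$ by an oscillation term on $B_{\rho'}$ plus an inhomogeneous remainder.

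\textbf{Hole-filling contraction.} Choosing $c$ to be the average of $f$ over $B_{\rho'}\setminus B_\rho$ and controlling the resulting $L^2$-oscillation by $(P_2)$ applied along a controlled chain of balls covering the annulus (admissible under doubling and $(P_2)$, cf.\ \cite{HaKo}), I would deduce
$$\int_{B_\rho}|\nabla f|^2 \, d\mu \;\leq\; \gamma \int_{B_{\rho'}}|\nabla f|^2 \, d\mu \;+\; C\, R^2 |B_{\rho'}|\,\|\mathcal{L}f\|_{L^\infty(B_R)}^2,$$
with some $\gamma<1$ independent of scale when $\rho'/\rho$ is bounded by a fixed universal constant. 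The inhomogeneous term in the Caccioppoli inequality is handled by Cauchy--Schwarz together with $(P_2)$ and an absorption argument in which a fraction of $\int\chi^2|\nabla f|^2$ is returned to the left-hand side to close the estimate.

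\textbf{Iteration.} Iterating the contraction along a dyadic chain between $B_r$ and $B_R$, using the classical geometric iteration lemma of elliptic regularity (see \cite[Ch.~III, Lemma~5.1]{Gia}), converts $\gamma<1$ into a power-type decay $(r/R)^{2\epsilon}$ for some $\epsilon\in(0,1)$. The accumulated inhomogeneous contributions sum to a geometric series bounded by $C R^2 |B_R|\,\|\mathcal{L}f\|_{L^\infty(B_R)}^2$; after dividing through by $|B_r|$, using doubling to absorb the ratio $|B_R|/|B_r|$ into the $(R/r)^\epsilon$ factor, and taking square roots, one recovers exactly \eqref{DG2}.

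\textbf{Main obstacle.} The delicate point is producing $\gamma<1$ in the hole-filling step. Using $(P_2)$ naively on all of $B_{\rho'}$ yields only $\gamma = O(1)$ and leaves the iteration trivial; one genuinely needs a Poincar\'e-type inequality on the annulus $B_{\rho'}\setminus B_\rho$ (or on a chain of balls covering it), so that $\int_{B_\rho}|\nabla f|^2$ and $\int_{B_{\rho'}\setminus B_\rho}|\nabla f|^2$ can be played off each other. In a general Dirichlet space this requires a chaining argument in the style of \cite{HaKo} and is the one step where the full force of the doubling-Poincar\'e pair is used; once available, the rest of the proof proceeds in exact analogy with the classical Euclidean De Giorgi argument.
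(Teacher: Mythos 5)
Your strategy is Caccioppoli plus Widman hole-filling, and it differs substantially from the paper's argument, but it has a quantitative gap that I believe is fatal in the generality of the proposition.

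\textbf{The quantitative problem with hole-filling.} Suppose everything you claim goes through: an annulus Caccioppoli, an annulus Poincar\'e, and absorption give
$\int_{B_\rho}|\nabla f|^2 \le \gamma \int_{B_{\rho'}}|\nabla f|^2 + (\text{remainder})$
with a fixed $\gamma=K/(1+K)<1$. Iterating over dyadic scales yields
$\int_{B_r}|\nabla f|^2\lesssim (r/R)^{\sigma}\int_{B_R}|\nabla f|^2+\cdots$
with $\sigma=-\log_2\gamma>0$. But \eqref{DG2} is stated in terms of \emph{averages}, and passing from integrals to averages costs a factor $|B_R|/|B_r|\lesssim (R/r)^{\nu}$ by doubling, so what you obtain is
$\left(\aver{B_r}|\nabla f|^2\right)^{1/2}\lesssim (R/r)^{(\nu-\sigma)/2}\left(\aver{B_R}|\nabla f|^2\right)^{1/2}+\cdots,$
i.e.\ \eqref{DG2} with $\epsilon=(\nu-\sigma)/2$. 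The proposition, and every application in Section~\ref{section:2}, needs $\epsilon<1$, hence $\sigma>\nu-2$. Hole-filling produces $\gamma=K/(1+K)$ where $K$ depends on the Caccioppoli and Poincar\'e constants, so there is no a priori lower bound on $\sigma$; for $\nu>2$ (the generic case) the estimate you obtain is too weak to qualify as $(DG_{2,\epsilon})$ with $\epsilon<1$. This is precisely why Widman's trick gives Morrey decay and hence H\"older continuity only in two dimensions, whereas in higher dimensions the genuinely harder level-set argument of De~Giorgi is needed.

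\textbf{A secondary issue.} The step where you invoke $(P_2)$ ``applied along a controlled chain of balls covering the annulus'' to get $\gamma<1$ is also not justified. Doubling plus $(P_2)$ does not imply a Poincar\'e inequality on annuli: in $\R$ (which satisfies $(VD)$ and $(P_1)$) the annulus $B_{2}\setminus B_1=(-2,-1)\cup(1,2)$ is disconnected, and no chain of balls inside it joins the two components. The Hajlasz--Koskela chaining argument upgrades weak Poincar\'e (with $\lambda B$ on the right-hand side) to strong Poincar\'e; it does not produce Poincar\'e inequalities on non-convex subsets such as annuli.

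\textbf{What the paper does instead.} The paper avoids hole-filling entirely. It first proves the elliptic regularity estimate \eqref{eq:ER} (H\"older continuity of harmonic functions): the $L^2$ mean-value property of Proposition~\ref{propMax} (from Faber--Krahn) combined with the level-set decay of Lemma~\ref{lemmaDG}, where the modified Poincar\'e inequality $(\widetilde P_{2-\eps})$ — and hence the Keith--Zhong open-endedness of $(P_p)$, which you do not use — is the essential input. From \eqref{eq:ER} one gets $\osc_{B_r}(u)\lesssim(r/R)^\alpha\osc_{B_R}(u)$ for harmonic $u$; Caccioppoli plus $(P_2)$ then yield the homogeneous De Giorgi bound \eqref{eq:DG-harm} with the good exponent $\epsilon=1-\alpha<1$. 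For general $f\in\mathcal D$ the paper then uses the decomposition $f=u+(f-u)$ from Lemma~\ref{lem:LM}, with $u$ harmonic in $B_R$ and $f-u$ supported in $B_R$, to control the inhomogeneous term, rather than the Cauchy--Schwarz/absorption argument you sketch. In short: your outline reproduces the outer shell of a classical elliptic regularity argument but substitutes a contraction technique that is quantitatively too weak for the De Giorgi exponent constraint $\epsilon<1$, and it omits the Keith--Zhong self-improvement on which the paper's level-set argument depends.
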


We are now in a position to give a simple proof of the main result of \cite{Gr0}, \cite{S}, and \cite{ST3}. For simplicity let us denote in what follows, for $B$ a ball and $f\in L^2_{loc}(M,\mu)$:
$$ \osc_B(f) :=2\text{-}\osc_B(f)= \left(\aver{B} |f-\aver{B} f \,d\mu|^2 \,d\mu\right)^{1/2}.$$

\begin{theorem} \label{rem:ep}
Let $(M,d,\mu, {\mathcal E})$  be  a doubling metric  measure   Dirichlet space with a ``carr\'e du champ''. Then $(P_2)$ implies \eqref{LE}. 
\end{theorem}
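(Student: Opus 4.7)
The strategy is to combine three ingredients already assembled in the paper. First, Proposition~\ref{prop:ppgp} with $p=2$ — noting that $(G_2)$ is automatic on any Dirichlet space with a ``carr\'e du champ'' via spectral calculus (the bound $\sqrt{t}\,\||\nabla e^{-t\mathcal{L}}f|\|_2 \lesssim \|f\|_2$ is just $\mathcal{E}(e^{-t\mathcal{L}}f,e^{-t\mathcal{L}}f) \lesssim t^{-1}\|f\|_2^2$) — shows that \eqref{d} together with $(P_2)$ already forces the Gaussian upper bound \eqref{UE}. Second, Proposition~\ref{propDG} supplies \eqref{DG2} for some $\epsilon \in (0,1)$. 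In view of Theorem~\ref{thm}, the remaining task is to derive the $L^2$ H\"older regularity property $(H_{2,2}^{1-\epsilon})$ for the heat semigroup, from which \eqref{LE} follows.

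The proof of $(H_{2,2}^{1-\epsilon})$ is the heart of the argument. Fix $t>0$ and a ball $B_r$ with $0<r\le\sqrt{t}$, and let $B_{\sqrt{t}}$ be the concentric ball of radius $\sqrt{t}$. Set $u:=e^{-t\mathcal{L}}f$. Applying $(P_2)$ on $B_r$ gives $\osc_{B_r}(u) \lesssim r\left(\aver{B_r}|\nabla u|^2\,d\mu\right)^{1/2}$. Now apply \eqref{DG2} with $R=\sqrt{t}$ to $u$:
\begin{equation*}
 \left(\aver{B_r}|\nabla u|^2\,d\mu\right)^{1/2} \lesssim \left(\frac{\sqrt{t}}{r}\right)^{\epsilon}\left[\left(\aver{B_{\sqrt{t}}}|\nabla u|^2\,d\mu\right)^{1/2} + \sqrt{t}\,\|\mathcal{L}u\|_{L^\infty(B_{\sqrt{t}})}\right].
\end{equation*}
Both bracketed quantities are then estimated. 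The global $L^2$ gradient bound above gives
$\bigl(\aver{B_{\sqrt{t}}}|\nabla u|^2 d\mu\bigr)^{1/2} \lesssim |B_{\sqrt{t}}|^{-1/2} t^{-1/2}\|f\|_2$.
For the $L^\infty$ term we use the already established \eqref{UE}: analyticity of the semigroup together with \eqref{UE} implies that $\mathcal{L}e^{-t\mathcal{L}}$ has a kernel satisfying a Gaussian bound of order $t^{-1}V(x,\sqrt{t})^{-1}\exp(-cd^2/t)$, which via Cauchy--Schwarz and doubling yields
$\|\mathcal{L}u\|_{L^\infty(B_{\sqrt{t}})} \lesssim t^{-1}|B_{\sqrt{t}}|^{-1/2}\|f\|_2$.
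Multiplying by $\sqrt{t}$ matches the previous term, and after reinserting into the chain we obtain
\begin{equation*}
 \osc_{B_r}(u) \lesssim \left(\frac{r}{\sqrt{t}}\right)^{1-\epsilon} |B_{\sqrt{t}}|^{-1/2}\|f\|_2,
\end{equation*}
which is exactly \eqref{gp} with $\eta = 1-\epsilon \in (0,1)$. Theorem~\ref{thm} then converts $\eqref{UE}+(H_{2,2}^{1-\epsilon})$ into \eqref{LE}.

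The main obstacle is the estimate $\sqrt{t}\,\|\mathcal{L}e^{-t\mathcal{L}}f\|_{L^\infty(B_{\sqrt{t}})}\lesssim |B_{\sqrt{t}}|^{-1/2} t^{-1/2}\|f\|_2$, since this is the one step that is not purely Hilbertian: it requires the pointwise information contained in \eqref{UE}. Equivalently, one can write $\mathcal{L}e^{-t\mathcal{L}} = e^{-(t/2)\mathcal{L}}\mathcal{L}e^{-(t/2)\mathcal{L}}$, use $L^2$ analyticity $\|\mathcal{L}e^{-(t/2)\mathcal{L}}\|_{2\to 2}\lesssim t^{-1}$, and close with the ultracontractive bound $\|e^{-(t/2)\mathcal{L}}\|_{L^2\to L^\infty(B_{\sqrt{t}})}\lesssim |B_{\sqrt{t}}|^{-1/2}$ furnished by \eqref{UE}. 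All remaining steps are elementary consequences of $(P_2)$, \eqref{DG2}, doubling, and the Hilbertian structure — no Moser parabolic iteration enters, which is the point of the argument.
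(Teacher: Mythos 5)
Your argument is essentially identical to the paper's proof of Theorem \ref{rem:ep}: apply $(P_2)$ to $e^{-t\mathcal{L}}f$, feed the result into $(DG_{2,\epsilon})$ from Proposition \ref{propDG}, control the two bracketed terms via $(G_2)$ (which, as you note, is automatic from the spectral theorem) and the Gaussian pointwise bound for the kernel of $t\mathcal{L}e^{-t\mathcal{L}}$ deduced from \eqref{UE}, obtaining $(H_{2,2}^{1-\epsilon})$ and closing with Theorem \ref{thm}. The alternative derivation you sketch for the $L^\infty$ bound on $\mathcal{L}e^{-t\mathcal{L}}f$ via the factorization $e^{-(t/2)\mathcal{L}}\mathcal{L}e^{-(t/2)\mathcal{L}}$ and $L^2\to L^\infty$ ultracontractivity is also the route the paper alludes to parenthetically.
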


\begin{proof} Applying  $(P_2)$  to $e^{-t\mathcal{L}}f$ for $t>0$ and $f\in L^{2}(M,\mu)$ on a ball $B_r$ for $r>0$   yields
\begin{equation}
\osc_{B_r}(e^{-t\mathcal{L}}f)
 \lesssim  r\left(\aver{B_r} | \nabla e^{-t\mathcal{L}} f |^2 d\mu \right)^{1/2}.\label{osnab}
 \end{equation}
According to Proposition \ref{propDG}, \eqref{DG2} holds for some $\eps>0$, hence
\begin{eqnarray}\label{nabnab}
&&\left(\aver{B_r} | \nabla e^{-t\mathcal{L}} f |^2 d\mu \right)^{1/2}\\&\lesssim&\left( \frac{\sqrt{t}}{r} \right)^ {\epsilon} \left[\left(\aver{B_{\sqrt{t}}} |\nabla e^{-t\mathcal{L}} f|^2 d\mu\right)^{1/2} + \sqrt{t} \esssup_{x\in B_{\sqrt{t}}} |\mathcal{L} e^{-t\mathcal{L}} f(x)|\right]\nonumber\\&\leq&\left( \frac{\sqrt{t}}{r} \right)^ {\epsilon}  \left(|B_{\sqrt{t}}|^{-1/2} \| |\nabla e^{-t\mathcal{L}} f| \|_2 +\sqrt{t} \esssup_{x\in B_{\sqrt{t}}} |\mathcal{L} e^{-t\mathcal{L}} f(x)|\right)\nonumber
\end{eqnarray}
for some $\epsilon\in(0,1)$,  $0<r\leq \sqrt{t}$ and $B_{\sqrt{t}}$ with radius $\sqrt{t}$  concentric to $B_r$.
By $(G_2)$, 
\begin{equation}\label{nabsq}
\| |\nabla e^{-t\mathcal{L}} f| \|_2\lesssim \frac{\|f\|_2}{\sqrt{t}}.
\end{equation}
Now recall that under our assumptions, \eqref{UE} holds thanks to Proposition \ref{prop:ppgp}. By \cite[Corollary 3.3]{Gr1} (one can also use the complex time bounds of \cite[Proposition 4.1]{CCO} and a Cauchy formula) the kernel of the operator $t\mathcal{L} e^{-t\mathcal{L}}$ also satisfies pointwise Gaussian estimates.
It follow that
\begin{equation}\label{nabinf}
\esssup_{x\in 2B_{\sqrt{t}}} |t\mathcal{L} e^{-t\mathcal{L}} f(x)|\lesssim |B_{\sqrt{t}}|^{-1/2} \|f\|_2.
\end{equation}
Putting together \eqref{osnab}, \eqref{nabnab}, \eqref{nabsq} and \eqref{nabinf} yields
\begin{align*}
\osc_{B_r}(e^{-t\mathcal{L}}f) \lesssim \left( \frac{r}{\sqrt{t}} \right)^ {1-\epsilon} |B_{\sqrt{t}}|^{-1/2} \|f\|_{2},
\end{align*}
that is, $(H_{2,2}^\eta)$ with $\eta=1-\epsilon>0$.  This implies \eqref{LE} according to Theorem \ref{thm}.
\end{proof}

The  original proofs of Theorem \ref{rem:ep} went through the parabolic Harnack inequality; some \cite{S, SU, parma, SA, ST3} used   a  parabolic Moser iteration, another one \cite{Gr0} tricky geometric arguments.  In \cite[Section 4.2]{HS},  a shorter proof was given, which went in three steps (with a fourth one, borrowed from \cite{FS},  to deduce parabolic Harnack from \eqref{LE}). The first one is to derive an elliptic regularity estimate from \eqref{d} and $(P_2)$.  We do not change this step, which relies on the elliptic Moser iteration; we give a proof for the sake of completeness in Proposition \ref{propER} below.  The second step is to obtain \eqref{UE}. Our approach in Proposition \ref{prop:ppgp} is particularly simple since $p=2$. The third step is a lower bound on the Dirichlet heat kernel inside a ball whose radius is the square root of the time under consideration. This is not trivial (see  \cite[pp. 1457-1462]{HS}) and here lies our main simplification.  We first push step one a little further by deducing $(DG_2)$ from the elliptic regularity.  We could then deduce the parabolic regularity as in \cite[Section 4]{AC2}). Instead, we use  the self-improvement of H\"older regularity estimates on the semigroup from Proposition \ref{proposition} and Theorem \ref{thm}.

\medskip
Introduce the
scale-invariant local Sobolev inequality
\begin{equation*}\label{Sq}
\tag{$LS_q$}
\|f \|_{q}^2\lesssim \frac{1}{V^{1-\frac{2}{q}}(x,r)}\left(\|f\|^2_2+r^2\mathcal{E}(f)\right),
\end{equation*}
for every ball $B=B(x,r)$,  every $f\in\mathcal{F}$ supported in $B(x,r)$, and for some  $q>2$.
This inequality was introduced in \cite{S} and was shown, under \eqref{d}, to be equivalent  to \eqref{due} in the Riemannian setting. The equivalence was stated in  our more general setting in \cite{ST2}. See also \cite{BCS} for many reformulations of  \eqref{Sq}, an alternative proof of the equivalence with \eqref{due}, and more references.


The main aim of \cite{HS} is to prove that the elliptic Harnack inequality, or an equivalent elliptic regularity estimate, together with \eqref{Sq}, or equivalently \eqref{UE}, implies the parabolic Harnack inequality.   It is enough in this respect to prove \eqref{LE}, since as we already said the parabolic Harnack inequality follows from $\eqref{UE}+\eqref{LE}$. This phenomenon falls in the circle of the ideas we are developing in the present work, and, using  a transition trick from estimates for harmonic functions to estimates for all functions together with  Theorem \ref{thm}, we will now offer a simple proof of \cite[Theorem 3.1]{HS}.  Let us say that  $u \in \mathcal{F}$ is harmonic on a ball $B$ if $L u=0$ in the weak sense on $B$. Note that the following statement involves  $\diam(M)$ as we want to treat by the same token the cases $M$ bounded and unbounded. In a first reading one can certainly assume $\diam(M)=+\infty$.

\begin{theorem} \label{thm:2} Let $(M,d,\mu, {\mathcal E})$  be  a doubling metric  measure   Dirichlet space with a ``carr\'e du champ'' satisfying \eqref{Sq} for some $q>2$. 
Assume that the following elliptic regularity estimate holds: there exists $\alpha>0$ and $\delta\in(0,1)$ such that for every $x_0 \in M$, $R>0$ with $R< \delta\, \diam(M)$, $u \in \mathcal{F}$ harmonic in $B(x_0,R)$ and $x,y \in B(x_0,R/2)$, one has
\begin{equation} \label{eq:ER} \tag{$ER$}
	|u(x)-u(y)|
		\lesssim \left(\frac{d(x,y)}{R}\right)^\alpha
		\osc_{B(x_0,R)}(u).
\end{equation}
Then  \eqref{LE} follows. 
\end{theorem}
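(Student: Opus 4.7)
The plan is to invoke Theorem \ref{thm}: since under \eqref{d} the hypothesis \eqref{Sq} is equivalent to \eqref{due} and hence delivers \eqref{UE} (as recalled just before the theorem statement), it remains only to produce the $L^2$ H\"older regularity $(H_{2,2}^\eta)$ of the heat semigroup for some $\eta>0$. This is where I would use the ``transition from harmonic to general functions'' alluded to in the paper, realised through a decomposition on balls.

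Fix $f\in L^2(M,\mu)$, $t>0$ and $x_0\in M$, set $u:=e^{-t\mathcal{L}}f$, and for any $R\leq\sqrt{t}$ write $B_0:=B(x_0,R)$. Decompose $u=u_1+u_2$ on $B_0$, where $u_2\in\mathcal{F}_0(B_0)$ solves the Dirichlet problem $\mathcal{L} u_2=\mathcal{L} u$ in $B_0$, $u_2=0$ on $\partial B_0$, and $u_1:=u-u_2$ is then harmonic in $B_0$. Applying \eqref{eq:ER} to $u_1$ yields, for $r\leq R/2$,
\[
\infty\text{-}\osc_{B(x_0,r)}(u_1)\lesssim \Big(\frac{r}{R}\Big)^\alpha \osc_{B_0}(u_1)\lesssim \Big(\frac{r}{R}\Big)^\alpha\big(\osc_{B_0}(u)+\|u_2\|_{L^\infty(B_0)}\big).
\]
For $u_2$, an elliptic Moser iteration based on \eqref{Sq} (essentially as in \cite{HS}) gives $\|u_2\|_{L^\infty(B_0)}\lesssim R^2\|\mathcal{L} u\|_{L^\infty(B_0)}$, while \eqref{UE} transfers to pointwise Gaussian bounds on the kernel of $t\mathcal{L}e^{-t\mathcal{L}}$ (as in \eqref{nabinf} in the proof of Theorem \ref{rem:ep}) and yields
\[
\|\mathcal{L} e^{-t\mathcal{L}}f\|_{L^\infty(B(x_0,\sqrt{t}))}\lesssim t^{-1}V(x_0,\sqrt{t})^{-1/2}\|f\|_2.
\]
Combining these bounds with the trivial consequence of \eqref{UE} that $\osc_{B_0}(u)\leq 2\|u\|_\infty\lesssim V(x_0,\sqrt{t})^{-1/2}\|f\|_2$, one arrives at the recursive inequality, valid for every $0<r\leq R\leq\sqrt{t}$,
\[
\omega(r)\leq C_0\Big(\frac{r}{R}\Big)^\alpha \omega(R)+C_1 R^2 M,
\]
where $\omega(\rho):=\osc_{B(x_0,\rho)}(u)$ and $M:=t^{-1}V(x_0,\sqrt{t})^{-1/2}\|f\|_2$.

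Finally, choosing a dilation ratio $\tau\in(0,1)$ small enough that $C_0\tau^\alpha<1$ and iterating the above at scales $R_k=\tau^k\sqrt{t}$, a standard Campanato--Morrey iteration lemma (cf. \cite[III.1]{Gia0}) converts this recursion into
\[
\omega(r)\lesssim \Big(\frac{r}{\sqrt{t}}\Big)^\eta V(x_0,\sqrt{t})^{-1/2}\|f\|_2,\qquad r\leq\sqrt{t},
\]
for some $\eta>0$. By doubling, $V(x_0,\sqrt{t})\simeq |B_{\sqrt{t}}|$, so this is precisely $(H_{2,2}^\eta)$, and Theorem \ref{thm} then delivers \eqref{LE}. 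I expect the most delicate step to be the Moser iteration giving the Dirichlet-problem bound $\|u_2\|_{L^\infty(B_0)}\lesssim R^2\|\mathcal{L} u\|_{L^\infty(B_0)}$; it is classical elliptic theory and is the one ingredient from \cite{HS} that the present approach keeps essentially unchanged. The simplification over \cite{HS} lies precisely in replacing the delicate lower bound on the Dirichlet heat kernel of \cite[pp.~1457--1462]{HS} by a one-shot application of Theorem \ref{thm}.
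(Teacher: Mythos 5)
Your proposal is correct in substance, and the skeleton is the paper's: decompose $u=e^{-t\mathcal{L}}f$ near a ball into a harmonic piece $u_1$ and a Dirichlet correction $u_2$ (this is exactly Lemma~\ref{lem:LM}, where the roles of $u_1$ and $u_2$ are played by $u$ and $f-u$), apply \eqref{eq:ER} to the harmonic piece, iterate in the radius, and feed the resulting $(H_{2,2}^\eta)$ into Theorem~\ref{thm}. The genuine divergence is in how the correction is controlled. You invoke an elliptic Moser iteration based on \eqref{Sq} to produce the pointwise bound $\|u_2\|_{L^\infty(B_0)} \lesssim R^2\|\mathcal{L}u\|_{L^\infty(B_0)}$, which then enters a clean Campanato recursion with no loss as $r\to 0$. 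The paper instead stays entirely in $L^2$: it bounds $\osc_{B_R}(f-u)\lesssim R^2\bigl(\aver{B_R}|\mathcal{L}f|^2\,d\mu\bigr)^{1/2}$ using only Faber--Krahn and integration by parts (\eqref{sarko}, \eqref{edun}), accepts the doubling penalty $(R/r)^{\nu/2}$ when descending to the smaller ball, and absorbs that unbounded factor through the tailored iteration Lemma~\ref{lemma-end}. The payoff of the paper's route is that the proof of Theorem~\ref{thm:2} uses \emph{no} Moser iteration at all: the only place elliptic Moser appears in this circle of results is Proposition~\ref{propER}, the derivation of \eqref{eq:ER} from $(P_2)$, which is not part of Theorem~\ref{thm:2} since \eqref{eq:ER} is a hypothesis here. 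Your remark that the Moser step ``is the one ingredient from \cite{HS} that the present approach keeps'' therefore misattributes the structure; it is precisely the step the paper strips out. Both roads lead to $(H_{2,2}^\eta)$, but if you keep your version you owe a careful $L^\infty$ Moser argument up to the boundary for the Dirichlet problem in the abstract Dirichlet-form setting (where there are no barriers and no classical boundary regularity), which is heavier than the $L^2$/Lax--Milgram argument it would replace.
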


\begin{rem}\label{PFK} It is  known that  $(P_2)$ implies  \eqref{Sq} for some $q>2$, see for instance {\rm\cite[Theorem 2.1]{S}, \cite[Section 5]{CS3}}. We shall also see in Proposition ${\rm \ref{propER}}$ below that   $(P_2)$ implies  \eqref{eq:ER}. Thus Theorem $\ref{thm:2}$ gives back Theorem  $\ref{rem:ep}$.

\end{rem}

Before we start the proof of Theorem $\ref{thm:2}$, recall  that \eqref{Sq} for some $q>2$ implies the following relative Faber-Krahn inequality
\begin{equation} \tag{$FK$} \label{FK}
	\left(\int_\Omega |f|^2\,d\mu\right)^{1/2}
	\lesssim r\left(\frac{|\Omega|}{V(x,r)}\right)^\beta  \left(\int_\Omega |\nabla f|^2\,d\mu\right)^{1/2}
\end{equation}
for some $\beta>0$, all balls $B(x,r)$, $x\in M$, $r\in(0,\delta \,\diam(M))$ with some $\delta<1$, and all  $f\in\mathcal{F}$ supported in $\Omega\subset B(x,r)$. See for instance \cite[Theorem 2.5]{HS}, as well as \cite[Section 3.3]{BCS}.

In particular, one has
\begin{equation}  \label{FKR}
	\left(\int_{B(x,r)} |f|^2\,d\mu\right)^{1/2}
	\lesssim r \left(\int_{B(x,r)} |\nabla f|^2\,d\mu\right)^{1/2},
\end{equation}
for all balls $B(x,r)$, $x\in M$, $r\in(0,\delta \,\diam(M))$ with some $\delta<1$, and all  $f\in\mathcal{F}$ supported in $B(x,r)$.

We will need the following result inspired by  \cite[Lemma 4.2]{AC2}. Note that the role classically played by ellipticity in such Lax-Milgram type arguments is played here by \eqref{FKR}.

\begin{lemma} \label{lem:LM} Let $(M,d,\mu, {\mathcal E})$  be  a doubling metric  measure   Dirichlet space with a ``carr\'e du champ'' satisfying \eqref{FKR}. Let $f\in {\mathcal D}$ and consider an open ball $B \subset M$. Then, there exists $u\in {\mathcal F}$ such that $f-u\in {\mathcal F}$ is supported in the ball $B$ and $u$ is harmonic in $B$: for every $\phi \in {\mathcal F}$ supported on $B$ 
$$ \int_M d\Gamma(u,\phi)=0,$$
where we recall that $d\Gamma$ is the energy measure associated with the Dirichlet form ${\mathcal E}$.
Moreover, we have
\begin{equation} \left(\aver{B} |\nabla(f-u)|^2 \, d\mu \right)^{1/2} + \left(\aver{B} |\nabla u|^2 \, d\mu \right)^{1/2} \lesssim \left(\aver{B} |\nabla f|^2 \, d\mu \right)^{1/2}. \label{eq:grad} \end{equation}
\end{lemma}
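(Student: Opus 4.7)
The plan is to interpret the statement as a Dirichlet problem on $B$ and solve it by a Riesz representation / Lax--Milgram argument, with the Faber--Krahn-type inequality \eqref{FKR} playing the role of coercivity. Setting $v := f-u$, the requirements translate into: find $v \in {\mathcal F}$ supported in $\bar B$ such that ${\mathcal E}(v,\phi) = {\mathcal E}(f,\phi)$ for every $\phi \in {\mathcal F}$ supported in $B$. Then $u := f-v$ automatically satisfies the harmonicity condition and $f-u = v$ has support in $B$.

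To produce such a $v$, I would introduce the closed subspace
\[
H := \{\phi \in {\mathcal F} : \phi = 0 \text{ quasi-everywhere on } M \setminus B\}
\]
equipped with the bilinear form ${\mathcal E}(\cdot,\cdot)$. Applied to $\phi \in H$, the Faber--Krahn inequality \eqref{FKR} yields $\|\phi\|_2^2 \lesssim r^2\,{\mathcal E}(\phi,\phi)$, so ${\mathcal E}$ is coercive on $H$ and defines a Hilbert norm there, equivalent to the graph norm inherited from ${\mathcal F}$. The functional $\phi \mapsto {\mathcal E}(f,\phi)$ is continuous on $H$ by Cauchy--Schwarz, so Riesz representation produces the required $v \in H$ with ${\mathcal E}(v,\phi) = {\mathcal E}(f,\phi)$ for all $\phi \in H$.

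For the gradient estimate \eqref{eq:grad}, I would test the variational identity against $\phi = v$ itself, obtaining ${\mathcal E}(v,v) = {\mathcal E}(f,v)$. Since $v \in H$ vanishes outside $\bar B$, strong locality forces the energy measure $d\Gamma(v,v)$ to be supported in $\bar B$, and the pointwise Cauchy--Schwarz inequality \eqref{eq:carre} then shows that $d\Gamma(f,v)$ is as well. Consequently,
\[
\int_M d\Gamma(v,v) = \int_B d\Gamma(f,v) \le \left(\int_B |\nabla f|^2\,d\mu\right)^{1/2}\left(\int_B |\nabla v|^2\,d\mu\right)^{1/2},
\]
which forces $\int_B |\nabla v|^2\,d\mu \le \int_B |\nabla f|^2\,d\mu$. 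The bound on $\int_B |\nabla u|^2\,d\mu$ then follows by the triangle inequality, and dividing through by $|B|$ produces \eqref{eq:grad}.

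The main obstacle here is the coercivity step: one needs a Poincaré-type bound for functions vanishing outside $B$ in order to upgrade ${\mathcal E}$ into a genuine Hilbert inner product on $H$, and this is precisely where \eqref{FKR} (an inequality of Faber--Krahn rather than of usual Poincaré type) is crucial. A secondary technical point is to verify that $H$ is closed in ${\mathcal F}$ and that its elements can legitimately be regarded as having support in $\bar B$ in the sense required for strong locality to apply; this is standard in the Dirichlet-form framework but should be flagged.
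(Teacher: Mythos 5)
Your proof is correct and follows essentially the same route as the paper's: the same Riesz-representation argument on the space of $\mathcal{F}$-functions vanishing outside $B$, with \eqref{FKR} supplying coercivity, and the same energy identity $\mathcal{E}(v,v)=\mathcal{E}(f,v)$ combined with strong locality and the pointwise Cauchy--Schwarz inequality \eqref{eq:carre} to obtain \eqref{eq:grad}. The only cosmetic difference is that you characterize the trial space by quasi-everywhere vanishing on $M\setminus B$ rather than by the support condition $\supp(\phi)\subset B$; this is a cleaner formulation of the same closed subspace.
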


\begin{proof}  Consider the space of functions 
$$ {\mathcal H} := \left\{ \phi \in {\mathcal F}\subset L^2, \ \textrm{supp}(\phi) \subset B \right\}.$$
Then, due to \eqref{FKR} the application 
$$\phi \mapsto \|\phi\|_{{\mathcal H}}:= \| |\nabla \phi |\|_{L^2(B)}$$
defines a norm on ${\mathcal H}$.  Consequently, ${\mathcal H}$ equipped with this norm is a Hilbert space, with the scalar product
$$ \langle \phi_1,\phi_2\rangle_{{\mathcal H}} := \int_{B} d\Gamma(\phi_1,\phi_2).$$ 
Since $f\in {\mathcal D} \subset {\mathcal F}$ then the linear form 
$$ \phi \mapsto \int_{B} d\Gamma(f,\phi)$$
is continuous on ${\mathcal H}$. By the   Riesz representation theorem, there exists $v\in{\mathcal H}$ such that for every $\phi \in  {\mathcal H}$ 
$$ \int_{B} d\Gamma(f,\phi) = \int_{B} d\Gamma(v,\phi).$$
We set $u:= f-v$ so that $v=f-u$ being in ${\mathcal H}$  is supported in  $B$. Moreover for every $\phi\in {\mathcal H}$, $\phi$ is supported in $B$ so the previous equality yields
$$ \int_{M} d\Gamma(u,\phi) =  \int_{M} d\Gamma(f,\phi) - \int_{M} d\Gamma(v,\phi) =0.$$
Observe that since $f-u$ is supported in $B$ and $u$ is harmonic on $B$
\begin{align*}
\aver{B} |\nabla(f-u)|^2 \, d\mu & = \frac{1}{|B|}\mathcal{E}(f-u,f-u)
=\frac{1}{|B|}\int_M \, d\Gamma(f-u,f-u)  \\
&=\frac{1}{|B|}\int_{M} \, d\Gamma(f,f-u). 
\end{align*}
Then by the locality property of the carr\'e du champ and since $f-u$ is supported in the ball $B$, we deduce that
\begin{align*}
\aver{B} |\nabla(f-u)|^2 \, d\mu & \leq \aver{B} |\nabla f| |\nabla(f-u)| \, d\mu  \\
 & \leq  \left(\aver{B} |\nabla (f-u)|^2 \, d\mu \right)^{1/2} \left(\aver{B} |\nabla f|^2 \, d\mu\right)^{1/2}.
\end{align*}
So it follows that 
$$ \left(\aver{B} |\nabla(f-u)|^2 \, d\mu \right)^{1/2} \lesssim \left(\aver{B} |\nabla f|^2 \, d\mu \right)^{1/2}$$
which allows us to prove \eqref{eq:grad}. 
\end{proof}

\begin{proof}[{Proof of Theorem $\ref{thm:2}$}] 


Let $u\in {\mathcal F}$ be a function harmonic on a ball $B_R=B(x_0,R)$ and write $B_{r}=B(x_0,r)$ for $r\leq R$ with $R \leq \delta\, \diam(M)$ (where we have chosen for $\delta$ the minimum of the two parameters in \eqref{FK} and \eqref{eq:ER}). From \eqref{eq:ER}, it follows that
\begin{equation} \osc_{B_r}(u) \lesssim \left(\frac{r}{R}\right)^{\alpha} \osc_{B_R}(u). \label{eq:ff} \end{equation}
Indeed, let $0<r\leq R/4$. According to  \eqref{eq:ER},  for every $x \in B_{r}$,
\begin{align*}
	&\left|u(x)-\aver{B_{r}} u(y) \,d\mu(y)\right|
	\leq \aver{B_{r}} |u(x)-u(y)|\,d\mu(y)\\
	& \quad \lesssim \left(\aver{B_{r}} \left(\frac{d(x,y)}{R}\right)^\alpha \,d\mu(y)\right)\osc_{B_R}(u)\\
	& \quad \lesssim \left(\frac{r}{R}\right)^\alpha \osc_{B_R}(u).
\end{align*}
Integrating over $B_{r}$ then gives
\begin{align*}
\osc_{B_{r}}(u)	 \lesssim \left(\frac{r}{R}\right)^\alpha \osc_{B_R}(u).
\end{align*}
The case $R/4\le r\le R$ is trivial.

We will now extend this estimate to non-harmonic functions, namely prove that
\begin{equation}\label{ernh}
 \osc_{B_r}(f)  \lesssim \left(\frac{r}{R}\right)^{\alpha} \osc_{B_{R}}(f) + \left(\frac{R}{r}\right)^{\nu/2} R^2  \left(\aver{B_R} |\mathcal{L}f|^2 \, d\mu\right)^{1/2}
\end{equation}
for all $f\in {\mathcal D}$ and  concentric balls $B_r,B_R$ with $0<r\le R$.
Let $f\in {\mathcal D}$. 
Since \eqref{FK} holds, one can invoke  Lemma \ref{lem:LM} below:  there exists $u\in {\mathcal F}$ harmonic on $B_R$ such that $f-u\in {\mathcal F}$ is supported in the ball $B_R$. 
One may write, using triangle inequality and \eqref{eq:ff},
\begin{align*} 
\osc_{B_r}(f) & \leq \osc_{B_r}(u) +  \osc_{B_r}(f-u) 
\lesssim \left(\frac{r}{R}\right)^{\alpha} \osc_{B_R}(u) +  \osc_{B_r}(f-u),
\end{align*}
hence by triangle inequality again
\begin{equation}\label{osc}
\osc_{B_r}(f)  \leq  \left(\frac{r}{R}\right)^{\alpha} \left(\osc_{B_R}(f) +  \osc_{B_R}(f-u)\right) + \osc_{B_r}(f-u).
\end{equation}

Let us start with estimating $\osc_{B_R}(f-u)$:
$$\osc_{B_R}(f-u)\lesssim \left( \aver{B_R} |f-u|^2 \, d\mu \right)^{1/2},$$
and since $f-u$ is supported in $B_R$, by \eqref{FK}  we have
\begin{equation*}
\left( \aver{B_R} |f-u|^2 \, d\mu \right)^{1/2}   \lesssim R \left( \aver{B_R} |\nabla (f-u)|^2 \, d\mu \right)^{1/2 }.
\end{equation*}
Now, since $f-u$ is supported on $B_R$ and $u$ is harmonic  on $B_R$
\begin{eqnarray*}
\aver{B_R} |\nabla(f-u)|^2 \, d\mu & = &\frac{1}{|B_R|}\mathcal{E}(f-u,f-u)
=\frac{1}{|B_R|}\int_M d\Gamma(f-u,f-u)  \\
&=&\frac{1}{|B_R|}\int_{M} d\Gamma(f,f-u) 
=\frac{1}{|B_R|}\int_{B_R}(f-u)\mathcal{L}f\,d\mu\\
&\le&\frac{1}{|B_R|}\int_{B_R}|f-u| |\mathcal{L}f|\,d\mu
\leq   \left(\aver{B_R} |f-u|^2\right)^{1/2} \left(\aver{B_R} |\mathcal{L}f|^2 \, d\mu\right)^{1/2}.
\end{eqnarray*}
From \eqref{FK}, it follows that
\begin{equation*}
\left(\aver{B_R} |\nabla(f-u)|^2 \, d\mu\right)^{1/2}\lesssim   R^{1/2} \left(\aver{B_R} |\nabla (f-u)|^2\right)^{1/4} \left(\aver{B_R} |\mathcal{L}f|^2 \, d\mu\right)^{1/4},
\end{equation*}
hence
\begin{equation}\label{sarko}
  \left(\aver{B_R} |\nabla(f-u)|^2 \, d\mu\right)^{1/2} \lesssim   R \left(\aver{B_R} |\mathcal{L}f|^2 \, d\mu\right)^{1/2}.
 \end{equation}

Gathering the above inequalities yields
\begin{equation}\label{edun}
\osc_{B_R}(f-u) \lesssim \left( \aver{B_R} |f-u|^2 \, d\mu \right)^{1/2}\lesssim R^2 \left( \aver{B_R} |\mathcal{L} f |^2 \, d\mu \right)^{1/2 }.
\end{equation}
Now for $\osc_{B_r}(f-u)$:
 by doubling 
\begin{align*}
\osc_{B_r}(f-u)  &\lesssim \left( \aver{B_r} |f-u|^2 \, d\mu \right)^{1/2}\\ &\lesssim \left(\frac{R}{r}\right)^{\nu/2}  \left( \aver{B_R} |f-u|^2 \, d\mu \right)^{1/2},
\end{align*}
therefore by \eqref{edun}
\begin{equation}\label{eded}
\osc_{B_r}(f-u)  \lesssim \left(\frac{R}{r}\right)^{\nu/2}R^2 \left(\aver{B_R} |\mathcal{L}f|^2 \, d\mu\right)^{1/2}.
\end{equation}

Finally, putting together \eqref{osc}, \eqref{edun}, \eqref{eded},
\begin{align*} \osc_{B_r}(f) & \lesssim \left(\frac{r}{R}\right)^{\alpha} \osc_{B_R}(f) + \left[ \left(\frac{r}{R}\right)^{\alpha}+\left(\frac{R}{r}\right)^{\nu/2}\right] R^2  \left(\aver{B_R} |\mathcal{L}f|^2 \, d\mu\right)^{1/2},
\end{align*}
which yields \eqref{ernh}.
A standard iteration argument, Lemma \ref{lemma-end} below, with 
$$ A(s):=s^{-1}\osc_{B_s}(f) \qquad \textrm{ and } \qquad B(s):=  \left(\aver{B_s} |\mathcal{L}f|^2 \, d\mu\right)^{1/2},$$
allows us to obtain for $\alpha'\in(0,\alpha)$
\begin{align*} \osc_{B_r}(f) & \lesssim \left(\frac{r}{R}\right)^{\alpha'} \left(\osc_{B_{R}}(f) + R^2  \|\mathcal{L}f\|_{L^\infty(B_{R})}\right).
\end{align*}
That holds for every $r\leq R$ with $R \leq \delta \,\diam(M)$.

Then if $M$ is unbounded, we may choose $R=\sqrt{t}$  and replace  $f$ with $e^{-t\mathcal{L}}f$,  which yields
\begin{align} \osc_{B_r}(e^{-t\mathcal{L}}f)  & \lesssim \left(\frac{r}{\sqrt{t}}\right)^{\alpha'} \left[\osc_{B_{\sqrt{t}}}(e^{-t\mathcal{L}}f)+ \|t\mathcal{L}e^{-t\mathcal{L}}f\|_{L^\infty(B_{\sqrt{t}})}\right] \nonumber \\
& \lesssim \left(\frac{r}{\sqrt{t}}\right)^{\alpha'}  |B_{\sqrt{t}}|^{-1/2} \|f\|_2, \label{eq:ffff}
\end{align}
where we used the Gaussian estimates for $t\mathcal{L}e^{-t\mathcal{L}}$ (see the proof of Theorem \ref{rem:ep}).  Property $(H^{\alpha'}_{2,2})$ follows and Theorem \ref{thm} yields \eqref{LE}. If the ambient space $M$ is bounded,  one can see that to get \eqref{LE} it is sufficient to check  \eqref{eq:ffff} for the scales $\sqrt{t} \lesssim \diam (M)$, which is exactly what we just have proved.
\end{proof}

It remains to prove the next lemma which follows ideas of \cite[Theorem 3.6]{Aus} and \cite[Lemma 2.1, Chapter III]{Gia0}.

\begin{lemma} \label{lemma-end} Let $0<r<R$ and consider a function $A:[r,R]\to {\mathbb R}^+$ such that
\begin{equation}\label{dtheta}
A(s)\lesssim \left(\frac{s'}{s}\right)^\theta A (s')
\end{equation}
for all  $s,s'$  such that  $r\le s\le s'\le R$ and for some $\theta>0$.
Let $B:[r,R]\to {\mathbb R}^+$, and 
assume that 
\begin{align} A(s)  & \lesssim  \left(\frac{s'}{s}\right)^{\epsilon} A(s') + \left(\frac{s'}{s}\right)^{\gamma} s' B(s'), \label{eq:dd} \end{align}
for every $r\leq s\leq s'\leq R$ and for some $\epsilon\in(0,1)$ and  $\gamma>0$.
Then
$$ A(r)  \lesssim  \left(\frac{R}{r}\right)^{\epsilon'} \left[A(R) + R \sup_{r\leq u\leq R} B(u)\right], $$
for every $\epsilon'\in(\epsilon,1)$. 
\end{lemma}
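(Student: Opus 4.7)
The plan is a Campanato-type geometric iteration. Let $C_0$ denote the implicit constant in the main hypothesis \eqref{eq:dd}; substituting $s=\tau s'$ for $\tau\in(0,1)$ gives
$$A(\tau s')\le C_0\tau^{-\epsilon}A(s')+C_0\tau^{-\gamma}\,s'B(s').$$
Fix $\tau\in(0,1)$ for now, set $\lambda:=C_0\tau^{-\epsilon}$, and consider the geometric sequence $s_k:=\tau^kR$ (iterated as long as $s_k\ge r$). A straightforward induction on $n$ yields
$$A(s_n)\le \lambda^n A(R)+C_0\tau^{-\gamma}\sum_{k=0}^{n-1}\lambda^{n-1-k}s_kB(s_k).$$
Bounding $B(s_k)\le B_*:=\sup_{r\le u\le R}B(u)$ and $s_k=\tau^kR$, the sum equals $\lambda^{n-1}R\sum_{k=0}^{n-1}(\tau^{1+\epsilon}/C_0)^k$. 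Assuming without loss of generality $C_0\ge 1$, the common ratio $\tau^{1+\epsilon}/C_0<1$ for any $\tau<1$, so the geometric series converges, producing
$$A(s_n)\lesssim \lambda^n\bigl[A(R)+RB_*\bigr],$$
with a constant depending on $\tau, C_0, \epsilon, \gamma$.

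Given any $\epsilon'\in(\epsilon,1)$, I calibrate $\tau$ by demanding $\lambda=\tau^{-\epsilon'}$, namely $\tau=C_0^{-1/(\epsilon'-\epsilon)}\in(0,1)$; then $\lambda^n=\tau^{-n\epsilon'}=(R/s_n)^{\epsilon'}$ and the previous estimate becomes
$$A(s_n)\lesssim (R/s_n)^{\epsilon'}\bigl[A(R)+RB_*\bigr].$$
To pass from the discrete scales $s_n$ to an arbitrary $s\in[r,R]$, I pick the unique index $n$ with $s_{n+1}\le s\le s_n$ and invoke the a priori bound \eqref{dtheta}:
$$A(s)\lesssim (s_n/s)^\theta A(s_n)\le \tau^{-\theta}A(s_n)\lesssim (R/s_n)^{\epsilon'}\bigl[A(R)+RB_*\bigr]\le (R/s)^{\epsilon'}\bigl[A(R)+RB_*\bigr],$$
where the last inequality uses $s\le s_n$. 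Taking $s=r$ delivers the claim.

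The delicate point is the calibration of $\tau$: each iteration step incurs a multiplicative factor $C_0$, and to absorb the cumulative $C_0^n$ into a clean power of $R/s_n$ one is forced to increase the exponent from $\epsilon$ to some $\epsilon'>\epsilon$. As $\epsilon'\downarrow\epsilon$, the required $\tau$ tends to $0$, so that the implicit constant (including the $\tau^{-\theta}$ factor picked up in the last interpolation step) blows up; this explains why the borderline exponent $\epsilon$ itself is not attained by this argument, and why the statement only claims the conclusion for every $\epsilon'\in(\epsilon,1)$.
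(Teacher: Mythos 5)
Your proof is correct and essentially identical to the paper's argument, with your ratio $\tau$ playing the role of $1/K$ where $K>1$ is the paper's iteration scale (the paper iterates upward from $r$, you iterate downward from $R$). The key calibration step, choosing the scale ratio so that the per-step constant is absorbed by worsening the exponent from $\epsilon$ to $\epsilon'$, and the final use of the a priori growth bound to interpolate from the discrete geometric scales to the endpoint, are exactly the paper's moves.
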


\begin{proof} Applying \eqref{eq:dd} with $s$ and $s'=Ks$ gives, for some numerical constant $C$, 
$$ A(s) \leq C K^\epsilon A(Ks)+ CK^{\gamma} Ks B(Ks) .$$
We choose $K> 1$ large enough such that $CK^\epsilon \leq K^{\epsilon'}$ for some fixed $\epsilon'\in(\epsilon,1)$. It follows that, for  $r\le s<Ks\leq R$, 
\begin{align*}
 A(s) & \leq  K^{\epsilon'} A(Ks)+  K^{\gamma+\epsilon'-\epsilon+1} s B(Ks)  \\
         & \leq  K^{\epsilon'} A(Ks)+  K^{\gamma+2} s \left(\sup_{r\leq u\leq R} B(u) \right).
\end{align*}

By iterating for $s=r, Kr, K^2r, ..., K^{\lfloor\lambda\rfloor-1}r$, where $\lambda$ is such that $K^\lambda r=R$, we deduce that
\begin{align*}
 A(r) \leq & K^{\lfloor\lambda\rfloor\eps'}A(K^{\lfloor\lambda\rfloor}r) + \left(\sum_{\ell=0}^{\lfloor\lambda\rfloor-1} (K^\ell r) K^{\ell  \epsilon'} \right)  K^{\gamma+2} \left(\sup_{r\leq u\leq R} B(u) \right)\\
= & K^{\lfloor\lambda\rfloor\eps'}A(K^{\lfloor\lambda\rfloor}r) + \left(\sum_{\ell=0}^{\lfloor\lambda\rfloor-1}  K^{\ell  (1+\epsilon')} \right)  r K^{\gamma+2} \left(\sup_{r\leq u\leq R} B(u) \right)\\
\lesssim & K^{\lfloor\lambda\rfloor\eps'}A(K^{\lfloor\lambda\rfloor}r) + K^{\lfloor\lambda\rfloor(1+\epsilon')}   r  \left(\sup_{r\leq u\leq R} B(u) \right)\\
\leq & K^{\lfloor\lambda\rfloor\eps'}\left[A(K^{\lfloor\lambda\rfloor}r) + K^{\lfloor\lambda\rfloor}   r  \left(\sup_{r\leq u\leq R} B(u) \right)\right]\\
\leq & K^{\lfloor\lambda\rfloor\eps'}\left[A(K^{\lfloor\lambda\rfloor}r) + R \left(\sup_{r\leq u\leq R} B(u) \right)\right].
 \end{align*}
The claim follows by using \eqref{dtheta}.
\end{proof}

\section{$L^p$ De Giorgi property and Caccioppoli inequality} \label{section:DG}

We pursue the same ideas as in Section \ref{section:2} but now for $p>2$. To this end,  we will rely on  $(G_p)$ and we will  introduce   $L^p$ versions, for $p\in[1,+\infty)$, of the De Giorgi property.

\begin{definition}[$L^p$ De Giorgi property] Let $(M,d,\mu, {\mathcal E})$ be a metric  measure   Dirichlet space with a ``carr\'e du champ'' and $\mathcal{L}$ the associated operator. For $p\in[1,+\infty)$ and $\epsilon\in(0,1)$, we say that \eqref{DG} holds if the following is satisfied: for all $r\leq R$, every pair of concentric balls $B_r,B_R$ with respective radii $r$ and $R$, and for every function $f\in {\mathcal D}$, one has
\begin{equation}
\left(\aver{B_r} |\nabla f|^p d\mu \right)^{1/p} \lesssim \left(\frac{R}{r}\right)^\epsilon \left[\left(\aver{B_R} |\nabla f|^p d\mu\right)^{1/p} + R \| \mathcal{L} f \|_{L^\infty(B_R)} \right].
\tag{$DG_{p,\epsilon}$} \label{DG}
\end{equation}
We sometimes omit the parameter $\epsilon$, and write $(DG_p)$ if \eqref{DG} is satisfied for some $\epsilon \in (0,1)$.
\end{definition}

\begin{rem}\label{lala} For  $f \in{\mathcal D}$ and $0<r<R$,
$$ \left(\aver{B_r} |\nabla f|^p d\mu \right)^{1/p} \lesssim \left(\frac{|B_R|}{|B_r|}\right)^{1/p} \left(\aver{B_R} |\nabla f|^p d\mu\right)^{1/p},$$
hence if  \eqref{dnu} holds, then
$$ \left(\aver{B_r} |\nabla f|^p d\mu \right)^{1/p} \lesssim \left(\frac{R}{r}\right)^{\nu/p} \left(\aver{B_R} |\nabla f|^p d\mu\right)^{1/p}.$$
Therefore if $p>\nu$, one always has $(DG_{p,\epsilon})$  with $\epsilon=\frac{\nu}{p}<1$. 
\end{rem}


We have just seen that, for $\nu<p<+\infty$, $(DG_{p})$ is a trivial consequence of \eqref{dnu}, and Proposition \ref{propDG} states that $(DG_{2})$ follows from \eqref{d} and $(P_2)$. In the range $2<p\le \nu$, property $(DG_{p})$ is more mysterious. The main result of this section is that, under $(P_2)$ and $(G_p)$ then $(DG_{q})$ holds for $2\le q<p$. 
In the $L^2$ situation, $(DG_2)$ is obtained from $(P_2)$ (as detailed in the Appendix \ref{AppDG}) through an extensive use of $L^2$-Caccioppoli inequality for subharmonic functions. Although this $L^2$ inequality is relatively easy, its $L^p$ version is unknown (we will prove it only for harmonic functions) and so it is not clear how  directly prove $(DG_p)$ for some $p>2$.  Beyond the fact that Proposition \ref{prop:Caccioppoli} will play a crucial role in Section \ref{sec:poi}, we feel that these questions are interesting.

\begin{proposition} \label{prop1} Let $(M,d,\mu, {\mathcal E})$  be  a doubling metric  measure   Dirichlet space with a ``carr\'e du champ''. Assume $(P_2)$ and  $(G_{p})$ for some $p\in(2,+\infty)$. Then  $({DG}_{q})$  holds for every $q\in[2,p]$.
 \end{proposition}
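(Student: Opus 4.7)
The plan is to start from Proposition \ref{propDG}, which already yields $(DG_2)$ from $(P_2)$, so only an upgrade of the integrability exponent is needed. I would follow a Giaquinta--Gehring-type scheme: decompose $f$ into a harmonic part plus a residual driven by $\mathcal{L}f$, upgrade the harmonic part to $L^q$ by a reverse H\"older inequality obtained from the $L^p$ Caccioppoli for harmonic functions (the announced Proposition \ref{prop:Caccioppoli}), then recombine.

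Given $f\in\mathcal{D}$ and concentric balls $B_r\subset B_R$, Lemma \ref{lem:LM} produces $u$ harmonic on $B_R$ with $v=f-u$ supported in $B_R$, and \eqref{eq:grad} controls both $\|\nabla u\|_{L^2(B_R)}$ and $\|\nabla v\|_{L^2(B_R)}$ by $\|\nabla f\|_{L^2(B_R)}$. The Faber-Krahn inequality \eqref{FKR} (available under $(P_2)$ via local Sobolev, as recalled in Remark \ref{PFK}) then yields $\left(\aver{B_R}|\nabla v|^2\,d\mu\right)^{1/2}\lesssim R\|\mathcal{L}f\|_{L^\infty(B_R)}$, exactly as in the proof of Theorem \ref{thm:2}.

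For the harmonic part $u$, I would combine Proposition \ref{prop:Caccioppoli}, which should give
\[
\left(\aver{B_\rho}|\nabla u|^q\,d\mu\right)^{1/q}\lesssim \frac{1}{\rho}\left(\aver{2B_\rho}\left|u-\aver{2B_\rho}u\,d\mu\right|^q\,d\mu\right)^{1/q}
\]
for $q\in[2,p]$ and $2B_\rho\subset B_R$, with the Sobolev-Poincar\'e embedding associated to $(P_2)$ in order to dominate the oscillation term by $\rho\left(\aver{cB_\rho}|\nabla u|^2\,d\mu\right)^{1/2}$. This gives the reverse H\"older $\left(\aver{B_\rho}|\nabla u|^q\,d\mu\right)^{1/q}\lesssim\left(\aver{cB_\rho}|\nabla u|^2\,d\mu\right)^{1/2}$. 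Chaining it with $(DG_2)$ applied to the harmonic $u$ and invoking Jensen's inequality ($q\ge 2$) yields
\[
\left(\aver{B_r}|\nabla u|^q\,d\mu\right)^{1/q}\lesssim\left(\frac{R}{r}\right)^\epsilon\left(\aver{B_R}|\nabla f|^q\,d\mu\right)^{1/q}.
\]

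For the residual $v$, the same reverse H\"older strategy is unavailable because $v$ is not harmonic. Instead I would exploit that $\mathcal{L}v=\mathcal{L}f$ is bounded on $B_R$ and apply $(G_q)$, which holds for $q\in[2,p]$ by interpolation between the trivially true $(G_2)$ and the assumed $(G_p)$, to a semigroup representation of $v$ localised to $B_R$, using off-diagonal decay as in the proof of Proposition \ref{prop:ppgp}. Combined with \eqref{dnu} this should produce
\[
\left(\aver{B_r}|\nabla v|^q\,d\mu\right)^{1/q}\lesssim \left(\frac{R}{r}\right)^{\nu/q}R\|\mathcal{L}f\|_{L^\infty(B_R)}.
\]
Summing the two pieces by triangle inequality then gives $(DG_q)$ with exponent $\max(\epsilon,\nu/q)<1$. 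The main obstacle lies precisely in this last $L^q$ control of $\nabla v$: since the $L^p$ Caccioppoli of Proposition \ref{prop:Caccioppoli} applies only to harmonic functions, one cannot iterate the argument on $v$ and must instead transfer the Faber-Krahn energy estimate to higher integrability by hand, via the off-diagonal refinement of $(G_p)$.
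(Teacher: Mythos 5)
Your plan diverges from the paper's proof in a way that creates real obstacles, and it rests on a misreading of Proposition~\ref{prop:Caccioppoli}.

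First, the misreading: the $L^p$ Caccioppoli inequality of Proposition~\ref{prop:Caccioppoli} is stated for \emph{every} $f\in\mathcal{D}$, not just for harmonic functions; the price for non-harmonicity is precisely the $r^2(\aver{2B_r}|\mathcal{L}f|^q\,d\mu)^{1/q}$ term on the right. This is crucial, because the paper's proof of Proposition~\ref{prop1} never performs the decomposition $f=u+v$ you propose. It applies the $L^q$ Caccioppoli directly to $f$ (after subtracting the mean), obtaining
$(\aver{B_r}|\nabla f|^q\,d\mu)^{1/q}\lesssim r^{-1}(\aver{2B_r}|f-\aver{B_r}f\,d\mu|^q\,d\mu)^{1/q}+r(\aver{2B_r}|\mathcal{L}f|^q\,d\mu)^{1/q}$,
and then controls the oscillation term by the $C^{1-\epsilon}$-H\"older norm of $f$ on $B_{2R}$. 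That H\"older norm is in turn bounded via the Meyers/Campanato embedding (Lemma~\ref{lemma:meyers} with $p=2$), combined with $(P_2)$ and $(DG_{2,\epsilon})$ from Proposition~\ref{propDG}, and then upgraded from $L^2$ to $L^q$ by Jensen's inequality. The whole mechanism is: $L^2$-De Giorgi $\Rightarrow$ pointwise H\"older control $\Rightarrow$ $L^q$ oscillation control, which feeds cleanly into the $L^q$ Caccioppoli with no Moser-type iteration.

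Second, even as a standalone strategy your plan has two gaps. (i) The reverse H\"older step on the harmonic part relies on the Sobolev--Poincar\'e inequality $(P_{2,q})$, which under $(VD_\nu)$ only holds for $q$ up to the Sobolev exponent $2\nu/(\nu-2)$ (when $\nu>2$, cf.\ Lemma~\ref{met}); for $q$ between this exponent and $p$ it fails, and the range $q\in[2,p]$ is not covered without an iteration you explicitly want to avoid. (ii) The $L^q$ control of $\nabla v$ is not established: $(G_q)$ concerns $\nabla e^{-t\mathcal{L}}$ for the \emph{global} semigroup, whereas $v$ solves a Dirichlet-type problem on $B_R$ with no representation through that semigroup; the ``off-diagonal refinement'' you invoke does not produce the required local $L^q$ gradient bound. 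The fix is to recognize that the $L^q$ Caccioppoli already handles general $f$ with the inhomogeneous term, so the harmonic/residual decomposition is unnecessary here; what is missing from your argument is the Campanato-type passage from the $L^2$-De Giorgi property to a genuine H\"older estimate (via Lemma~\ref{lemma:meyers}) that makes the final recombination work for all $q\in[2,p]$.
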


The proof of Proposition \ref{prop1} will make use of an  $L^p$ version of the Caccioppoli inequality. 
For $p=2$, integration by parts yields easily a $L^2$-Caccioppoli inequality (see for instance Lemma \ref{prop:Cacc-harmonic}) which enables one to deduce 
$(DG_{2})$  from  $(P_2)$ (Proposition \ref{propDG}).
But obtaining an $L^p$-Caccioppoli inequality for $p>2$ seems more difficult and cannot be handled using integration by parts directly. We use $(G_p)$ and the finite  propagation speed property instead, and obtain:

\begin{proposition}[$L^p$ Caccioppoli inequality] \label{prop:Caccioppoli} Let $(M,d,\mu, {\mathcal E})$  be  a doubling metric  measure   Dirichlet space with a ``carr\'e du champ''
satisfying  \eqref{UE}. Assume $(G_{p})$ for some $p\in[2,+\infty]$. Then for every $q\in(1,p]$,
 \begin{equation}\label{cacp}
 r\left(\aver{B_r} | \nabla f|^q d\mu \right)^{1/q} \lesssim \left(\aver{2B_r} |f|^q\, d\mu \right)^{1/q}+ r^2\left(\aver{2B_r} | \mathcal{L}f|^q \, d\mu \right)^{1/q}\end{equation}
for all $f\in {\mathcal D}$ and all balls  $B_r$ of radius $r$.
\end{proposition}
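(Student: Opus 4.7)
The plan is to emulate, at $L^q$-level, the standard $L^2$ Caccioppoli argument, but with the heat semigroup doing the work of integration by parts. Starting from the Duhamel formula
$$\nabla f = \nabla e^{-r^2\mathcal{L}} f + \int_0^{r^2} \nabla e^{-s\mathcal{L}}(\mathcal{L} f)\,ds, \qquad f\in\mathcal{D},$$
the task reduces to estimating $L^q(B_r)$-norms of $\nabla e^{-s\mathcal{L}}g$ for $g = f$ (at $s=r^2$) and $g = \mathcal{L} f$ (integrated over $s\in(0,r^2)$).

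The key analytic input is a Gaussian $L^q$-$L^q$ off-diagonal estimate for $\sqrt{s}\,\nabla e^{-s\mathcal{L}}$. For $q\in[2,p]$ this follows by interpolation between $(G_p)$ and the Davies-Gaffney bound \eqref{eq:DG}, exactly as in \eqref{eq:od}; the range $q\in(1,2)$ requires more care and can be handled by duality together with \eqref{UE} and the Calder\'on-Zygmund theory for the Riesz-type operator $\nabla\mathcal{L}^{-1/2}$. Splitting $f$ and $\mathcal{L} f$ along the dyadic annuli $C_0=2B_r$, $C_k=2^{k+1}B_r\setminus 2^kB_r$ ($k\ge 1$), applying these off-diagonal bounds piece by piece, integrating in $s$, and using doubling \eqref{dnu} yields
$$r\,\Bigl(\aver{B_r}|\nabla f|^q\,d\mu\Bigr)^{1/q}\lesssim\sum_{k\ge 0}e^{-c4^k}2^{k\nu/q}\Bigl[\Bigl(\aver{2^{k+1}B_r}|f|^q\,d\mu\Bigr)^{1/q}+r^2\Bigl(\aver{2^{k+1}B_r}|\mathcal{L} f|^q\,d\mu\Bigr)^{1/q}\Bigr].$$
The factors $e^{-c 4^k}$ easily dominate the polynomial $2^{k\nu/q}$, so the series is absolutely convergent.

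The main obstacle, and the reason finite propagation speed enters, is to replace the averages over the dilates $2^{k+1}B_r$ by averages over $2B_r$ alone. For this I would write $e^{-s\mathcal{L}}$ via the subordination formula $\frac{1}{\sqrt{\pi s}}\int_\R e^{-t^2/(4s)}\cos(t\sqrt{\mathcal{L}})\,dt$ and exploit that the wave operator $\cos(t\sqrt{\mathcal{L}})$ propagates support by at most $|t|$: for data supported in $C_k$, its contribution to $\nabla e^{-s\mathcal{L}}f$ on $B_r$ survives only for $|t|\gtrsim 2^k r$, producing a Gaussian weight $e^{-c 4^k r^2/s}$ uniform in $s\in(0,r^2]$. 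Coupled with a self-improvement argument of the type of Lemma \ref{lemma-end} applied to $\rho\mapsto\rho\|\nabla f\|_{L^q(B_\rho)}$, this allows the tail contributions to be absorbed into the left-hand side, leaving only $2B_r$-averages on the right. The technically hardest point will be to couple the $L^2$-native finite-propagation argument with the $L^q$-off-diagonal framework when $q\ne 2$, since the wave operator is naturally only $L^2$-bounded and one must pass carefully through the $L^q$ gradient bounds derived from $(G_p)$.
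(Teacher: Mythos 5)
Your Duhamel-plus-dyadic-annuli scheme would indeed establish the \emph{nonlocal} variant of the inequality, the one with the sum $\sum_{k\ge 0}e^{-c4^k}$ over dilates $2^{k+1}B_r$ — and the paper itself remarks, immediately before the proof, that this weaker version would suffice for the applications. But your route to the \emph{local} statement \eqref{cacp} does not close. The heat semigroup $e^{-s\mathcal{L}}$ has no compact propagation support, only Gaussian decay: even after subordinating to the wave group $\cos(t\sqrt{\mathcal{L}})$, the contribution of $f|_{C_k}$ to $\nabla e^{-s\mathcal{L}}f$ on $B_r$ for $k\ge 1$ is nonzero (the subordination integral averages over all $t$, so the support confinement of $\cos(t\sqrt{\mathcal{L}})$ only yields the weight $e^{-c4^k r^2/s}$, not a vanishing). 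There is then no mechanism by which a Lemma~\ref{lemma-end}-type iteration over radii can ``absorb'' these far-annulus contributions into the left-hand side: Lemma~\ref{lemma-end} iterates a nested-ball inequality of the form $A(s)\lesssim (s'/s)^\epsilon A(s') + (s'/s)^\gamma s'B(s')$ from small to large radii and produces a power-law bound; it is not designed to, and cannot, eliminate Gaussian tails coming from annuli disjoint from $2B_r$. The tails simply remain.

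The paper's argument avoids this obstruction by abandoning the Duhamel/heat-semigroup decomposition entirely in favour of the splitting $f = \varphi(r\sqrt{\mathcal{L}})f + (I-\varphi(r\sqrt{\mathcal{L}}))f$, where $\varphi\in\mathcal{S}(\R)$ is even, $\varphi(0)=1$, and $\supp\hat\varphi\subseteq[-1,1]$. By Fourier inversion and the finite propagation speed of $\cos(t\sqrt{\mathcal{L}})$, both $\varphi(r\sqrt{\mathcal{L}})$ and $\psi(r\sqrt{\mathcal{L}}):=(r^2\mathcal{L})^{-1}(I-\varphi(r\sqrt{\mathcal{L}}))$ (with $\psi(x)=(1-\varphi(x))/x^2$, again even, Schwartz, and of Fourier support in $[-1,1]$) propagate at distance \emph{exactly} at most $r$, and by strong locality so do their compositions with $\nabla$. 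So there are no tails at all: $\|\,|\nabla f|\,\|_{L^q(B_r)}$ is controlled directly by $\|f\|_{L^q(2B_r)}$ and $r^2\|\mathcal{L}f\|_{L^q(2B_r)}$, once one bounds $\|\,|\nabla\varphi(r\sqrt{\mathcal{L}})|\,\|_{q\to q}$ and $\|\,|\nabla\psi(r\sqrt{\mathcal{L}})|\,\|_{q\to q}$ by $r^{-1}$. Those $L^q$ operator bounds are obtained by factoring through the resolvent, $\psi(r\sqrt{\mathcal{L}})=(1+r^2\mathcal{L})^{-1}\lambda(r\sqrt{\mathcal{L}})$ with $\lambda\in\mathcal{S}(\R)$, then using the Laplace-transform gradient bound on the resolvent (from $(G_q)$) together with the Duong–Ouhabaz–Sikora spectral multiplier theorem for $\lambda(r\sqrt{\mathcal{L}})$ under \eqref{UE}. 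This also sidesteps a second gap in your sketch: the claimed $L^q$-$L^q$ off-diagonal estimates for $\sqrt{s}\nabla e^{-s\mathcal{L}}$ when $q<2$ are not established by ``duality'' (the adjoint of $\nabla e^{-s\mathcal{L}}$ is not itself a gradient operator), and while $(G_q)$ for $1<q<2$ does hold by Calder\'on–Zygmund theory under \eqref{UE}, upgrading it to Gaussian off-diagonal $L^q$ bounds would need a separate argument which you do not supply.
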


Note that it would be easier, and sufficient for our purposes in this section, to prove only a non-local version of the above inequality:
$$ r\left(\aver{B} | \nabla f|^q d\mu \right)^{1/q} \lesssim \sum_{\ell\geq 0}  2^{-\ell N} \left[\left(\aver{2^\ell B} |f|^q d\mu \right)^{1/q}+ r^2\left(\aver{2^\ell B} | \mathcal{L}f|^q d\mu \right)^{1/q}\right].$$
We feel anyway that \eqref{cacp} may be of independent interest, and that it is worth the extra effort, namely the use of the finite  propagation speed property.

Since the heat semigroup satisfies Davies-Gaffney estimates \eqref{eq:DG}, it is known (see e.g. \cite{ST2}, \cite{Sikora} and \cite[Section 3]{CS}) that $\sqrt{\mathcal{L}}$ satisfies the finite speed propagation property (with a speed equal to $1$ due to the normalization in \eqref{defd}) for solutions of the corresponding wave equation. Consequently for every even function $\varphi \in \mathcal{S}(\R)$ with $\supp \hat{\varphi} \subseteq [-1,1]$, every pair of Borel sets $E,F \subset M$ and every $r>0$, one has $\Eins_E \varphi(r \sqrt{\mathcal{L}}) \Eins_F =0$ if $\dist(E,F)>r$. This follows from the Fourier inversion formula and the bounded Borel functional calculus of $\sqrt{\mathcal{L}}$, cf. \cite[Lemma 4.4]{AMcM}.
Moreover, since the Dirichlet form is strongly local, one also has  $\Eins_E |\nabla \varphi(r \sqrt{\mathcal{L}}) \Eins_F |^2=0$ if $\dist(E,F)>r$.

Indeed, for every nonnegative, bounded and Lipschitz function $\chi_E$ supported in $E$ and $g\in {\mathcal D}$ supported in $F$, it follows from \eqref{def}, \eqref{eq:leibniz}, and \eqref{eq:carre} that
\begin{align*}
 \int_M \chi_E |\nabla \varphi(r \sqrt{\mathcal{L}}) g |^2 \, d\mu &  \leq  \int_M |\varphi(r\sqrt{\mathcal{L}})g|^2 |\nabla \chi_E|^2 \, d\mu \\
& \ + \int_M \chi_E |\varphi(r \sqrt{\mathcal{L}}) g| |\mathcal{L} \varphi(r \sqrt{\mathcal{L}}) g| \, d\mu.
\end{align*}
If $d(E,F)>\sqrt{r}$ then $\varphi(r\sqrt{\mathcal{L}})g=0$ in the support of $\chi_E$, so
$$  \int \chi_E |\nabla \varphi(r \sqrt{\mathcal{L}}) g |^2 \, d\mu=0.$$
This holds for every nonnegative, bounded Lipschitz function $\chi_E$ supported in $E$, hence 
$$ \int_E |\nabla \varphi(r \sqrt{\mathcal{L}}) g |^2 \, d\mu=0.$$

\begin{proof}[Proof of Proposition $\ref{prop:Caccioppoli}$]
Consider an even function $\varphi \in \mathcal{S}(\R)$ with $\supp \hat{\varphi} \subseteq [-1,1]$ and $\varphi(0)=1$. Consequently, $\varphi'(0)=0$, and $x\mapsto x^{-1}\varphi'(x) \in \mathcal{S}(\R)$ is even with Fourier support in $ [-1,1]$, cf. \cite[Lemma 6.1]{AMcM}.  Fix a ball $B_r$ of radius $r>0$, an exponent $q\in(1,p]$ and split
$$ f = \varphi(r\sqrt{\mathcal{L}}) f + (I- \varphi(r\sqrt{\mathcal{L}})) f.$$
Since $\varphi(0)=1$, one has
\begin{align*}
 I- \varphi(r\sqrt{\mathcal{L}}) = \int_0^r \sqrt{\mathcal{L}}\varphi'(s\sqrt{\mathcal{L}})  \, ds.
\end{align*}
Using the finite propagation speed  property applied to the functions $\varphi$ and $x\mapsto x^{-1}\varphi'(x)$, we have that both $\varphi(r\sqrt{\mathcal{L}})$ and $(r^2 \mathcal{L})^{-1}(1- \varphi(r\sqrt{\mathcal{L}}))$ satisfy the  propagation property at  speed $1$ and so propagate at a distance at most $r$. As we have seen above, the same stills holds by composing with the gradient. Hence, for $f\in \mathcal{D}$,
\begin{align}\label{eq:cc}
	\||\nabla f |\|_{L^q(B_r)}
		& \lesssim \||\nabla\varphi(r \sqrt{\mathcal{L}})|\|_{q\to q} \|f\|_{L^q(2B_r)}   \\
		& +  r^2 \||\nabla (1- \varphi(r\sqrt{\mathcal{L}}))(r^2 \mathcal{L})^{-1}| \|_{q\to q} \| \mathcal{L} f \|_{L^q(2B_r)}. \nonumber
\end{align}
Let us now estimate $\||\nabla\varphi(r \sqrt{\mathcal{L}})|\|_{q\to q}$ and $\||\nabla (1- \varphi(r\sqrt{\mathcal{L}}))(r^2 \mathcal{L})^{-1}| \|_{q\to q}$.
For $q>2$,  $(G_q)$ holds by interpolation between $(G_2)$ and $(G_p)$, and for  $q<2$, $(G_q)$ always holds as we already said. By writing the resolvent via the Laplace transform as $(1+r^2\mathcal{L})^{-1} = \int_0^{+\infty}  e^{-t(1+r^2\mathcal{L})} dt$, we deduce gradient bounds for the resolvent in $L^q$, that is
\begin{align*}
\| |\nabla (1+r^2\mathcal{L})^{-1} |\|_{q \to q} & \lesssim \int_0^{+\infty }e^{-t} \||\nabla e^{-tr^2\mathcal{L}}|\|_{q \to q} \,dt 
  \lesssim \int_0^{+\infty} \frac{e^{-t}}{r\sqrt{t}} \,dt  \lesssim r^{-1}. 
\end{align*}
Denote $\psi:=\varphi$ or $\psi:=x\mapsto (1-\varphi(x))/x^2$, and consider $\lambda(x)=\psi(x) (1+x^2)$. Hence
$$ \psi(r \sqrt{\mathcal{L}}) = (1+r^2\mathcal{L})^{-1} \lambda(r\sqrt{\mathcal{L}}) $$
and therefore
\begin{align*} 
\| |\nabla \psi(r \sqrt{\mathcal{L}})|\|_{q\to q} & \leq \| |\nabla (1+r^2\mathcal{L})^{-1}| \|_{q \to q} \| \lambda(r\sqrt{\mathcal{L}}) \|_{q \to q} 
 \lesssim r^{-1} \| \lambda(r\sqrt{\mathcal{L}}) \|_{q \to q}.
\end{align*}
Observe that $\lambda \in \mathcal{S}(\R)$ since $\varphi(0)=1$ and $\varphi'(0)=0$. 
By a functional calculus result (see e.g. \cite{DOS}, Theorem 3.1)
which relies on \eqref{UE}, we then have 
$$ \sup_{r>0} \ \| \lambda(r\sqrt{\mathcal{L}}) \|_{q \to q} \lesssim 1,$$
and consequently,
$$ \| |\nabla \psi(r \sqrt{\mathcal{L}})| \|_{q\to q} \lesssim r^{-1}.$$
Coming back to \eqref{eq:cc}, we obtain
\begin{align*}
	\||\nabla f |\|_{L^q(B_r)}	& \lesssim r^{-1} \| f\|_{L^q(2B_r)} +  r \| \mathcal{L} f \|_{L^q(2B_r)},
	\end{align*}
which is the claim.
\end{proof}

We are now ready to prove Proposition \ref{prop1}. To pass from $(DG_{2})$ to $(DG_{q})$ for $2<q\le p$ we shall use H\"older estimates as in \cite[Lemma 2.3]{HKim}.

\begin{proof}[Proof of Proposition $\ref{prop1}$] 
Let $B_R$ denote a ball of radius $R>0$, and let $f \in \mathcal{F}$. 
 Using  Lemma \ref{lemma:meyers} for $p=2$ and $(P_2)$,   we can write
\begin{equation}\label{holP}
\|f\|_{C^{\eta}(B_R)}  \lesssim \|f\|_{C^{\eta,2}(B_R)}  
  \lesssim \sup_{\tilde B \subset 6B_R} r(\tilde{B})^{1-\eta} \left(\aver{\tilde B} \left| \nabla f \right|^2 d\mu \right)^{{1/2}}.
\end{equation}
for any $\eta\in(0,1)$.  Now according to Proposition \ref{propDG}, $(P_2)$ yields $(DG_{2,\epsilon})$ for some $\epsilon\in(0,1)$.
Choose $\eta=1-\eps$. Then it follows from  \eqref{holP} together  with $(DG_{2,\epsilon})$  that
\begin{eqnarray*}
 \|f\|_{C^{1-\epsilon,2}(B_R)} 
  &\lesssim& \sup_{\tilde B \subset 6B_R} r^\eps(\tilde{B}) \left(\aver{\tilde B} \left| \nabla f \right|^2 d\mu \right)^{{1/2}}\\
 &\lesssim& R^{\eps} \left[ \left(\aver{24 B_R} \left| \nabla f \right|^2 d\mu \right)^{{1/2}}  + R \esssup_{x\in 24 B_R} |\mathcal{L} f(x)| \right]. 
\end{eqnarray*}
We then apply Jensen's inequality and obtain for $q\geq 2$
\begin{equation} \label{eq:hol}  \|f\|_{C^{1-\epsilon}(B_R)} \lesssim R^{\eps} \left[ \left(\aver{24 B_R} \left| \nabla f \right|^q d\mu \right)^{\frac{1}{q}}  + R \esssup_{x\in 24 B_R} |\mathcal{L} f(x)|\right]. \end{equation}

We now deduce $(DG_q)$ for $2<q\le p$ from \eqref{eq:hol} and $(G_p)$.

By $(G_p)$ and Proposition \ref{prop:Caccioppoli} one has \eqref{cacp} for $2<q\le p$. Replacing $f$ with  $f-\aver{B_r} f d\mu$
in \eqref{cacp} yields
\begin{equation}\label{cacm}
 \left( \aver{B_r} |\nabla f|^q d\mu \right)^{1/q}  \lesssim r^{-1 } \left(\aver{2 B_r} |f-\aver{B_r} f d\mu|^q d\mu \right)^{1/q}+ r \left(\aver{2 B_r} | \mathcal{L} f|^q d\mu \right)^{1/q}.
 \end{equation}
for every ball  $B_r$  with radius $r>0$ and $B_r \subset B_R$.

Now   we can write
\begin{align*}
 \left(\aver{2 B_r} |f-\aver{B_r} f d\mu|^q d\mu \right)^{1/q} & 
  \le\esssup_{x,y\in 2B_r}|f(x)-f(y)| \\& 
  \le r^{1-\epsilon} \|f\|_{C^{1-\epsilon}(B_{2r})} \\ &\leq r^{1-\epsilon} \|f\|_{C^{1-\epsilon}(B_{2R})}
\end{align*}
and \eqref{eq:hol} yields
\begin{align*}
 \left(\aver{2 B_r} |f-\aver{B_r} f d\mu|^q d\mu \right)^{1/q} 
& \lesssim r^{1-\epsilon} R^{\epsilon} \left[\left(\aver{48 B_R} |\nabla f|^q d\mu \right)^{1/q} +  R \esssup_{x\in 48 B_R} |\mathcal{L} f(x)| \right].
\end{align*}
Consequently
\begin{eqnarray*}
\left( \aver{B_r} |\nabla f|^q d\mu \right)^{1/q}  & \lesssim&  \left(\frac{R}{r}\right)^{\epsilon} \left[\left(\aver{48 B_R} |\nabla f|^q d\mu \right)^{1/q} 
  + R \esssup_{x\in 48 B_R} |\mathcal{L} f(x)|\right]\\
  &&+ r \left(\aver{2 B_r} | \mathcal{L} f|^q d\mu \right)^{1/q}\\
   & \lesssim&  \left(\frac{R}{r}\right)^{\epsilon} \left[\left(\aver{48 B_R} |\nabla f|^q d\mu \right)^{1/q} 
  + R \esssup_{x\in 48 B_R} |\mathcal{L} f(x)|\right],
  \end{eqnarray*}
which gives easily $(DG_{q,\epsilon})$.
\end{proof}

Let us finish this section by noting a  consequence of \cite[Theorem 0.4]{AC} and Proposition \ref{prop1} on the De Giorgi property.
\begin{coro}
Let $(M,d,\mu, {\mathcal E})$  be  a doubling metric  measure   Dirichlet space with a ``carr\'e du champ'' satisfying    $(P_2)$. 
 Then there exists $\eps>0$ such that  $({DG}_p)$  holds for every $p\in[2,2+\eps)$.
\end{coro}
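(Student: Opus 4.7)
The plan is to combine the self-improvement of the gradient estimate $(G_2)$ supplied by \cite[Theorem 0.4]{AC} with the already proved Proposition \ref{prop1}. The corollary is genuinely short; the content has been front-loaded into those two results.

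First I would observe that the setting is a good one: since $(P_2)$ holds together with \eqref{d}, Theorem \ref{rem:ep} gives the two-sided Gaussian estimates \eqref{UE} and \eqref{LE}. In particular \eqref{UE} is in force (this is the hypothesis needed to invoke the Auscher--Coulhon self-improvement). Moreover $(G_2)$ is automatic (it follows from the analyticity of $e^{-t\mathcal{L}}$ on $L^2(M,\mu)$ together with integration by parts), so we are exactly in the framework covered by \cite[Theorem 0.4]{AC}.

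Next, I would invoke \cite[Theorem 0.4]{AC}, which says that under doubling and $(P_2)$ the property $(G_p)$ self-improves in $p$: there exists $\epsilon_0>0$ such that $(G_{p})$ holds for every $p\in[2,2+\epsilon_0)$. Pick any $p_0\in(2,2+\epsilon_0)$, for instance $p_0:=2+\epsilon_0/2$. Then Proposition \ref{prop1}, applied with this $p_0$, yields $(DG_q)$ for every $q\in[2,p_0]$. Setting $\epsilon:=\epsilon_0/2$ gives the corollary.

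There is no real obstacle here — all the hard work (namely the passage from $(P_2)+(G_p)$ to $(DG_q)$ via the $L^p$ Caccioppoli inequality) has been carried out in Proposition \ref{prop1}, and the self-improvement of $(G_p)$ is cited as a black box from \cite{AC}. The only small care required is in the ordering of the quantifiers: one must fix a single $p_0>2$ in the range where $(G_{p_0})$ is available, and then Proposition \ref{prop1} immediately delivers $(DG_q)$ for the whole interval $[2,p_0]$, in particular for $q$ ranging in $[2,2+\epsilon)$ with $\epsilon$ any number strictly less than $p_0-2$.
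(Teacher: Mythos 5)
Your proposal is correct and matches the paper's intended argument exactly: the paper does not spell out a proof, but the sentence preceding the corollary states that it is ``a consequence of \cite[Theorem 0.4]{AC} and Proposition \ref{prop1}'', which is precisely the two-step reduction you carry out. One tiny remark on precision: you present \eqref{UE} as a hypothesis needed to invoke \cite[Theorem 0.4]{AC}, but that theorem is stated directly under doubling and $(P_2)$ (it produces $(R_p)$ for $p\in(1,2+\epsilon_0)$, hence $(G_p)$ in that range via $(R_p)\Rightarrow(G_p)$), so the detour through Theorem \ref{rem:ep} is harmless but not needed.
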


\section{$L^2$ Poincar\'e inequality through harmonic functions}
\label{sec:poi}



\begin{theorem} \label{thm:poincare} Let $(M,d,\mu, {\mathcal E})$  be  a doubling metric  measure   Dirichlet space with a ``carr\'e du champ'' satisfying \eqref{FKR}. Suppose that 
$$ \left( \aver{B} | u- \aver{B} u \, d\mu|^2 \, d\mu \right)^{1/2} \lesssim r \left( \aver{B} |\nabla u|^2\, d\mu \right)^{1/2}$$
for every ball $B$ of radius $r>0$ and every function $u\in {\mathcal D}$ harmonic on $4B$.
Then 
$$ \left( \aver{B} | f- \aver{B} f \, d\mu |^2 \, d\mu \right)^{1/2} \lesssim r \left( \aver{4B} |\nabla f|^2\, d\mu \right)^{1/2}$$
for every ball $B$ of radius $r>0$ and every function $f\in {\mathcal D}$.
\end{theorem}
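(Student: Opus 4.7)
The natural plan is to decompose $f$ into a harmonic part (on a slightly enlarged ball) plus a part with zero boundary values, then estimate each piece using the assumed Poincaré inequality for harmonic functions and the Faber--Krahn inequality \eqref{FKR}, respectively.

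More precisely, given $f\in{\mathcal D}$ and a ball $B$ of radius $r$, apply Lemma \ref{lem:LM} to the ball $4B$. This produces a decomposition $f = u + v$, where $u\in{\mathcal F}$ is harmonic in $4B$, $v=f-u\in{\mathcal F}$ is supported in $4B$, and the energies are comparable:
\begin{equation*}
\left(\aver{4B}|\nabla v|^2\,d\mu\right)^{1/2} + \left(\aver{4B}|\nabla u|^2\,d\mu\right)^{1/2} \lesssim \left(\aver{4B}|\nabla f|^2\,d\mu\right)^{1/2}.
\end{equation*}
By the triangle inequality, $\osc_B(f)\le \osc_B(u)+\osc_B(v)$, so it suffices to bound each term by the right-hand side of the desired inequality.

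For the harmonic piece, the standing hypothesis applied to the ball $B$ (since $u$ is harmonic on $4B$) gives
\begin{equation*}
\osc_B(u)\lesssim r\left(\aver{B}|\nabla u|^2\,d\mu\right)^{1/2}\lesssim r\left(\aver{4B}|\nabla u|^2\,d\mu\right)^{1/2},
\end{equation*}
where the second inequality is just the doubling property. For the remainder, since $\aver{B}v\,d\mu$ minimises the $L^2$ distance from $v$ to constants on $B$,
\begin{equation*}
\osc_B(v)\le \left(\aver{B}|v|^2\,d\mu\right)^{1/2}\lesssim \left(\aver{4B}|v|^2\,d\mu\right)^{1/2}.
\end{equation*}
Since $v$ is supported in $4B$, the Faber--Krahn inequality \eqref{FKR} yields $\|v\|_{L^2(4B)}\lesssim r\,\| |\nabla v| \|_{L^2(4B)}$, hence combining with the energy bound from Lemma \ref{lem:LM}:
\begin{equation*}
\osc_B(v)\lesssim r\left(\aver{4B}|\nabla v|^2\,d\mu\right)^{1/2}\lesssim r\left(\aver{4B}|\nabla f|^2\,d\mu\right)^{1/2}.
\end{equation*}
Adding the two bounds gives the desired Poincaré inequality.

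No step is really an obstacle in this argument; the only mild technical point is that Lemma \ref{lem:LM} produces $u\in\mathcal{F}$, whereas the hypothesis is phrased for $u\in\mathcal{D}$. This can be handled either by a density/regularisation argument (e.g.\ applying $e^{-t\mathcal{L}}$ and letting $t\to 0$, using that $u$ being weakly harmonic on $4B$ provides the needed local regularity) or by reading the hypothesis as applying to the broader class of weakly harmonic $u\in\mathcal{F}$, which is the meaning used in practice. The heart of the proof is simply the Lax--Milgram-type splitting together with \eqref{FKR} playing the role of a scale-invariant zero-boundary Poincaré inequality.
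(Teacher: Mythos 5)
Your proof is correct and follows essentially the same route as the paper: decompose $f=u+v$ via Lemma \ref{lem:LM} applied on $4B$, use the harmonic Poincar\'e hypothesis for $u$, the Faber--Krahn inequality \eqref{FKR} for the compactly supported part $v$, and the energy comparison \eqref{eq:grad} to close the estimate; the only cosmetic difference is that you pass from $\aver{B}|\nabla u|^2$ to $\aver{4B}|\nabla u|^2$ directly by doubling and then invoke \eqref{eq:grad} on $u$, whereas the paper first splits $|\nabla u|\le |\nabla(f-u)|+|\nabla f|$ and invokes \eqref{eq:grad} on $f-u$. You also rightly flag the mismatch between $u\in\mathcal F$ produced by Lemma \ref{lem:LM} and the hypothesis being stated for $u\in\mathcal D$ --- a point the paper's own proof silently glosses over --- and your suggested resolutions (density/regularisation, or reading the hypothesis for weakly harmonic $u\in\mathcal F$) are the right ones.
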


\begin{proof} Let $f\in {\mathcal D}$ and $B$ be a ball of radius $r>0$. Recall that we denote the $L^2$-oscillation of $f$ over $B$ by
$$ \osc_B(f)= \left( \aver{B} | f- \aver{B} f \, d\mu|^2 \, d\mu \right)^{1/2}.$$
Since \eqref{FKR} holds, one can invoke  Lemma \ref{lem:LM}: there exists $u\in {\mathcal F}$ harmonic on $4B$ such that $f-u\in {\mathcal F}$ is supported in the ball $4B$. 
One may write, using the triangle inequality,
\begin{align*} 
\osc_{B}(f) & \leq \osc_{B}(u) +  \osc_{B}(f-u). 
\end{align*}
Using the assumption for $u$, we have
$$  \osc_{B}(u) \lesssim r \left( \aver{B} |\nabla u|^2\, d\mu \right)^{1/2} \lesssim r \left( \aver{B} |\nabla (f-u)|^2\, d\mu \right)^{1/2} + r \left( \aver{B} |\nabla f|^2\, d\mu \right)^{1/2}.$$
Moreover, since $f-u$ is supported on $4B$, then by \eqref{FKR}, 
$$\osc_{B}(f-u)\lesssim \left( \aver{4B} |f-u|^2 \, d\mu \right)^{1/2} \lesssim r \left( \aver{4B} |\nabla (f-u)|^2 \, d\mu \right)^{1/2}.$$
As a consequence, it follows that
\begin{equation} \osc_B(f) \lesssim r \left( \aver{4B} |\nabla (f-u)|^2\, d\mu \right)^{1/2} + r \left( \aver{B} |\nabla f|^2\, d\mu \right)^{1/2}. \label{eq:1} \end{equation}
Finally, by combining \eqref{eq:grad} and \eqref{eq:1}, we deduce that
$$ \osc_B(f) \lesssim  r \left( \aver{4B} |\nabla f|^2\, d\mu \right)^{1/2},$$
which is the claim.
\end{proof}

Let us recall the following equivalence between weak and strong Poincar\'e inequalities (combining \cite[Theorem 3.1]{HaKo} and \cite{KZ}).
\begin{theorem} \label{thm:wpoincare} Let $(M,d,\mu, {\mathcal E})$  be  a doubling metric  measure   Dirichlet space with a ``carr\'e du champ''. Let $p\in(1,\infty)$, and let $f\in {\mathcal D}$. The (strong) $L^p$ Poincar\'e inequality $(P_p)$ for $f$ is equivalent to the weak version: there exists $\lambda>1$ such that  
\begin{equation}\tag{$w$-$P_p$}
 \left( \aver{B} \left| f - \aver{B} f d\mu \right|^p \, d\mu \right)^{1/p} \lesssim r \left(\aver{\lambda B} |\nabla f|^p d\mu \right)^{1/p}, 
 \label{w-Pp}
\end{equation}
where $B$ ranges over balls in $M$ of radius $r$.
\end{theorem}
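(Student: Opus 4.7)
The direction $(P_p) \Rightarrow (w\text{-}P_p)$ is immediate with $\lambda = 1$. For the converse the plan is to run a Haj{\l}asz--Koskela style chaining argument. Fix a ball $B$ of radius $r$. Using the doubling property, construct a Whitney-type covering of $B$ by a bounded-overlap family $\{B_i\}$ whose radii are comparable to their distance to the complement of a suitable enlargement $cB$, and in particular such that $\lambda B_i \subset cB$ for every $i$. Any two Lebesgue points $x,y\in B$ can be joined by a chain $B_{i_0}, B_{i_1}, \ldots, B_{i_N}$ of consecutively overlapping balls whose length $N$ is controlled uniformly in $x,y$ by doubling. This yields the telescoping estimate
\[
|f(x) - f(y)| \lesssim \sum_{k=0}^{N-1} \bigl|\aver{B_{i_k}} f\, d\mu - \aver{B_{i_{k+1}}} f\, d\mu\bigr|,
\]
each term of which is controlled via $(w\text{-}P_p)$ applied on $B_{i_k}$ (legitimate since $\lambda B_{i_k} \subset cB$). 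Raising to the $p$th power, integrating over $B \times B$, and using the bounded overlap of $\{\lambda B_i\}$ collapses the sum into a single integral of $|\nabla f|^p$ over $cB$; the doubling property then absorbs the factor $|cB|/|B|$ into the implicit constant and produces $(P_p)$.

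The role of the Keith--Zhong theorem \cite{KZ} is to supply a margin of integrability in this argument: $(w\text{-}P_p)$ self-improves to $(w\text{-}P_{p-\eps})$ for some $\eps > 0$ as soon as $p > 1$, since the proof in \cite{KZ} uses only the weak form on an enlarged ball. Running the chain argument starting from $(w\text{-}P_{p-\eps})$ rather than $(w\text{-}P_p)$ ensures that the summation along the chain is unconditionally convergent and yields $(P_{p-\eps})$; a concluding H\"older inequality then recovers $(P_p)$ at the original exponent. Equivalently, KZ guarantees that the chain argument operates within an open set of exponents, avoiding any borderline divergence intrinsic to the $L^p$ telescoping.

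The principal obstacle I expect is not any individual estimate but the Whitney/Boman combinatorial bookkeeping: organising the cover so that the dilation factor $\lambda$, the overlap multiplicity, the chain length, and the doubling dimension compound into a single finite constant independent of $B$ and $f$. This is classical in doubling metric measure spaces, but needs to be tracked carefully to ensure the constants behave well under the interplay with Keith--Zhong. Once the chain is in place, the analytic steps are routine: the triangle inequality on the telescoping sum, link-by-link application of the weak Poincar\'e hypothesis, and Fubini combined with bounded overlap to reduce a double sum to a single gradient integral.
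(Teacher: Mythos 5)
The paper does not prove this statement; it is recorded as a citation to Haj{\l}asz--Koskela \cite[Theorem 3.1]{HaKo} together with Keith--Zhong \cite{KZ}, so your attempt is to be judged on its own merits rather than against a proof in the text.

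Your easy direction is fine, and chaining via Whitney coverings is indeed the right circle of ideas, but the setup you describe contains a genuine gap. You take a Whitney cover $\{B_i\}$ of $B$ with radii comparable to $\dist(B_i, M\setminus cB)$ and ask both that $\lambda B_i\subset cB$ for some \emph{fixed} $c>1$ and that chain lengths be uniformly bounded. These two features are incompatible with the conclusion you want. If $c>1$, then every point of $B$ lies at distance $\gtrsim(c-1)r$ from $\partial(cB)$, so all Whitney balls meeting $B$ have radius comparable to $r$; there are boundedly many and chains are short, exactly as you say. But then all the gradient averages produced by $(w\text{-}P_p)$ live on balls $\lambda B_i\subset cB$, and after summation you recover $\osc_{p,B}(f)\lesssim r\bigl(\aver{cB}|\nabla f|^p\bigr)^{1/p}$ --- which is merely $(w\text{-}P_p)$ again with dilation $c$, not $(P_p)$. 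One cannot shrink $c\to1$ either: the Whitney constants blow up like $(c-1)^{-1}$. To actually remove the dilation, the Whitney balls must shrink towards $\partial B$ so that $\lambda B_i\subset B$; this forces chains to $B_i$ near $\partial B$ to have length growing like $\log\bigl(r/\dist(B_i,\partial B)\bigr)$, which is unbounded. Your "uniformly bounded chain length" claim is therefore the crux of the problem, not a bookkeeping detail.

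Once the chains are allowed to be long, the argument has to be reorganised. The telescoped sum $\sum_k |f_{R_k}-f_{R_{k+1}}|$ converges anyway by geometric decay of the radii, so the convergence is not where the difficulty lies (contrary to the role you assign to Keith--Zhong). The real issue is that a direct estimate of $\aver{B}|f-f_{Q_0}|^p$ from the chain estimate leads to $r^p\aver{B}M(|\nabla f|^p\Eins_B)$ with a Hardy--Littlewood maximal operator $M$; boundedness of $M$ on $L^1$ fails, so one needs some slack. Two standard ways out exist. One is the Keith--Zhong self-improvement $(w\text{-}P_p)\Rightarrow(w\text{-}P_{p-\eps})$, after which $M$ acts on $L^{p/(p-\eps)}$ where it is bounded, giving $(P_p)$; this is the margin that is genuinely needed, but it concerns the maximal function, not chain convergence. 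The other avoids Keith--Zhong altogether: run the Boman chain with a Fubini/bounded-overlap argument and a single application of H\"older in the dyadic-sum variable to obtain the strong $(1,p)$-Poincar\'e $\aver{B}|f-f_B|\,d\mu\lesssim r\bigl(\aver{B}|\nabla f|^p\,d\mu\bigr)^{1/p}$, and then invoke the Sobolev--Poincar\'e improvement of \cite[Theorem 5.1]{HaKo} (using the truncation property as in Lemma~\ref{met}) to upgrade the left-hand exponent from $1$ to some $q\geq p$, from which $(P_p)$ follows by H\"older on the left. Either way, your penultimate step "$(P_{p-\eps})$, then H\"older gives $(P_p)$" is also garbled: H\"older is applied on the right-hand side, yielding $(P_p)$ directly from the gradient estimate in $L^{p-\eps}$ without passing through $(P_{p-\eps})$.

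So: the approach is the correct one, but as written it cannot produce the strong inequality because the geometry of the Whitney cover is mis-specified; repairing it forces unbounded chains and, with them, either a Keith--Zhong margin for the maximal function or the H\"older/Sobolev--Poincar\'e route from $(1,p)$.
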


We then state the following main result of this section:

\begin{theorem} \label{thm:gpppp2} Let $(M,d,\mu, {\mathcal E})$  be  a doubling metric  measure   Dirichlet space with a ``carr\'e du champ''. For every $p>2$, the combination $(G_p)$ with $(P_p)$ yields $(P_2)$, therefore the heat semigroup satisfies the two-sided Gaussian estimates \eqref{UE} and \eqref{LE}.
\end{theorem}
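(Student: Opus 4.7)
The strategy is to reduce $(P_2)$ to the corresponding inequality for harmonic functions and to derive the latter from a reverse H\"older estimate for the gradient. First, Proposition \ref{prop:ppgp} immediately gives \eqref{UE} from $(G_p)+(P_p)$, hence \eqref{FK} and \eqref{FKR}, so Theorem \ref{thm:poincare} is at our disposal: once $(P_2)$ is proved for functions harmonic on a sufficiently large dilate of the ball under consideration, the weak $L^2$ Poincar\'e inequality for all functions follows, and Theorem \ref{thm:wpoincare} upgrades it to $(P_2)$. Finally, Theorem \ref{rem:ep} turns $(P_2)$ into the matching lower Gaussian bound \eqref{LE}.

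Fix a ball $B=B(x,r)$ and let $u$ be harmonic on a sufficiently large dilate of $B$. I would first apply the $L^p$ Caccioppoli inequality of Proposition \ref{prop:Caccioppoli} to $u-\aver{2B}u\,d\mu$: since $\mathcal{L}u=0$, the $\mathcal{L}f$ term vanishes and one gets
$$ r\left(\aver{B}|\nabla u|^p\,d\mu\right)^{1/p} \lesssim \left(\aver{2B}\bigl|u-\aver{2B}u\,d\mu\bigr|^p\,d\mu\right)^{1/p}. $$
By the Keith--Zhong self-improvement, $(P_p)$ implies $(P_{p-\eta})$ for some small $\eta>0$; taking $\eta$ small enough, the standard Sobolev--Poincar\'e self-improvement in doubling spaces (see \cite{HaKo,FPW}) promotes $(P_{p-\eta})$ into a Sobolev--Poincar\'e inequality with left-hand exponent $p$ and right-hand exponent some $q\in(1,p)$, so that
$$ \left(\aver{2B}\bigl|u-\aver{2B}u\,d\mu\bigr|^p\,d\mu\right)^{1/p} \lesssim r\left(\aver{\lambda_1 B}|\nabla u|^q\,d\mu\right)^{1/q} $$
for some dilation factor $\lambda_1$. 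Combining these two inequalities yields the reverse H\"older estimate for $|\nabla u|$:
$$ \left(\aver{B}|\nabla u|^p\,d\mu\right)^{1/p} \lesssim \left(\aver{\lambda_1 B}|\nabla u|^q\,d\mu\right)^{1/q}. $$

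The next step is the Gehring--Stredulinsky-type self-improvement of this reverse H\"older inequality, as recorded in Appendix \ref{App:ste}: it lowers the right-hand exponent, eventually yielding
$$ \left(\aver{B}|\nabla u|^p\,d\mu\right)^{1/p} \lesssim \left(\aver{\lambda_2 B}|\nabla u|^2\,d\mu\right)^{1/2} $$
for some $\lambda_2$. Applying $(P_p)$ to $u$ on $B$ and using Jensen on the left (since $p\geq 2$) gives
$$ \left(\aver{B}\bigl|u-\aver{B}u\,d\mu\bigr|^2\,d\mu\right)^{1/2} \leq \left(\aver{B}\bigl|u-\aver{B}u\,d\mu\bigr|^p\,d\mu\right)^{1/p} \lesssim r\left(\aver{\lambda_2 B}|\nabla u|^2\,d\mu\right)^{1/2}, $$
which is the desired weak $(P_2)$ for harmonic $u$. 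Plugged into a $\lambda_2$-dilated variant of Theorem \ref{thm:poincare}, this yields weak $(P_2)$ for all functions in $\mathcal{D}$; Theorem \ref{thm:wpoincare} then delivers $(P_2)$, and Theorem \ref{rem:ep} closes the argument.

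The hard part is the self-improvement step that lowers the right-hand exponent of the reverse H\"older inequality down to $2$. Gehring's classical lemma naturally improves the left-hand exponent upward; pushing the right-hand exponent below a prescribed value --- in particular to $2$ --- in the abstract doubling metric measure Dirichlet setting requires a more delicate argument of good-$\lambda$ or Stredulinsky type, which is the purpose of Appendix \ref{App:ste}. Once that self-improvement is available, the remaining pieces --- the $L^p$ Caccioppoli inequality of Proposition \ref{prop:Caccioppoli}, the Keith--Zhong and Sobolev upgrades of $(P_p)$, and the elliptic-to-general reduction of Theorem \ref{thm:poincare} --- fit together cleanly.
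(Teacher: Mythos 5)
Your proposal is correct and follows essentially the same route as the paper: Caccioppoli in $L^p$ (Proposition \ref{prop:Caccioppoli}), Keith--Zhong plus the Sobolev--Poincar\'e upgrade (Lemma \ref{met}) to get a reverse H\"older estimate for $|\nabla u|$, the self-improvement of Theorem \ref{thm:imp} to lower the right-hand exponent to $2$, and then Theorems \ref{thm:poincare}, \ref{thm:wpoincare}, and \ref{rem:ep} (via Proposition \ref{prop:ppgp}). The only difference is cosmetic: the paper first proves the intermediate estimate of Proposition \ref{prop} for general $f\in\mathcal{D}$, carrying the $\mathcal{L}f$ term as the regular functional $a(B)$ in Theorem \ref{thm:imp}, and then specializes to harmonic $u$ so that this term drops out, whereas you specialize to harmonic $u$ at the outset, taking $a\equiv 0$ from the start.
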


Note that the  last statement  relies on the theorem of Saloff-Coste of which we have given a simplified version in Section \ref{section:2}.

%

Before proving the theorem, we need the following proposition, which relies on the  self-improving property of  reverse H\"older inequalities from Theorem \ref{thm:imp}. 

\begin{proposition}\label{prop} 
Let $(M,d,\mu, {\mathcal E})$ be a doubling metric  measure   Dirichlet space with a ``carr\'e du champ''.
Assume that for some $p_0\in(2,\infty)$, $(P_{p_0})$ and $(G_{p_0})$  hold. Then for every function $f\in {\mathcal D}$ and every ball $B$ of radius $r>0$, we have
\begin{align*}
\left(\aver{B_r} | f - \aver{B_r} f \, d\mu |^2 \, d\mu \right)^{1/2} \lesssim r \left(\aver{2B_r} | \nabla f |^2 \, d\mu \right)^{1/2}  + r^2 \| \mathcal{L}f \|_{L^\infty(4B_r)}.
\end{align*}
\end{proposition}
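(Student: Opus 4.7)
The plan is to combine the $L^{p_0}$ Caccioppoli inequality from Proposition \ref{prop:Caccioppoli} with $(P_{p_0})$, Keith-Zhong, and the self-improvement of reverse Hölder estimates from Appendix \ref{App:ste}. I would first reduce the statement to a reverse Hölder inequality for $|\nabla f|$. Indeed, since $(G_{p_0})+(P_{p_0})$ implies \eqref{UE} by Proposition \ref{prop:ppgp}, the Caccioppoli inequality \eqref{cacp} holds for every $q\in(1,p_0]$. Moreover, applying $(P_{p_0})$ followed by Jensen's inequality gives
\begin{equation*}
\left(\aver{B_r} |f-\aver{B_r}f|^2 \, d\mu\right)^{1/2} \leq \left(\aver{B_r} |f-\aver{B_r}f|^{p_0} \, d\mu\right)^{1/p_0} \lesssim r\left(\aver{B_r} |\nabla f|^{p_0} \, d\mu\right)^{1/p_0},
\end{equation*}
so that the entire problem reduces to establishing a reverse Hölder estimate of the form
\begin{equation*}
\left(\aver{B_r} |\nabla f|^{p_0} \, d\mu\right)^{1/p_0} \lesssim \left(\aver{2B_r} |\nabla f|^{2} \, d\mu\right)^{1/2} + r\|\mathcal{L}f\|_{L^\infty(4B_r)}.
\end{equation*}

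The next step is to produce a first reverse Hölder with a \emph{gain} in exponent. By the Keith-Zhong theorem \cite{KZ} recalled in the introduction, $(P_{p_0})$ self-improves to $(P_{p_0-\delta})$ for some $\delta>0$; by the Sobolev-Poincaré self-improvement in doubling spaces (see e.g. \cite{FPW,HaKo}, already used in the proof of Proposition \ref{prop:ppgp}), this upgrades to a mixed inequality
\begin{equation*}
\left(\aver{2B_r} |f-\aver{2B_r}f|^{p_0} \, d\mu\right)^{1/p_0} \lesssim r\left(\aver{2B_r} |\nabla f|^{p_0-\delta} \, d\mu\right)^{1/(p_0-\delta)},
\end{equation*}
possibly after taking $\delta$ smaller, so that the Sobolev exponent $(p_0-\delta)^*$ is at least $p_0$ (automatic if $p_0-\delta \geq \nu$, otherwise easily arranged). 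Plugging this into Caccioppoli \eqref{cacp} with $q=p_0$ applied to $f-\aver{2B_r}f$ (which absorbs the mean) yields
\begin{equation*}
\left(\aver{B_r} |\nabla f|^{p_0} \, d\mu\right)^{1/p_0} \lesssim \left(\aver{2B_r} |\nabla f|^{p_0-\delta} \, d\mu\right)^{1/(p_0-\delta)} + r\|\mathcal{L}f\|_{L^\infty(2B_r)},
\end{equation*}
which is a genuine reverse Hölder estimate with strict gain of exponent and a controlled inhomogeneous remainder.

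The final step is to iterate this gain to bring the exponent on the right-hand side all the way down to $2$. This is exactly what the abstract self-improvement of reverse Hölder inequalities in Appendix \ref{App:ste} is tailored for: starting from a reverse Hölder $(L^{p_0},L^{p_0-\delta})$ estimate over nested balls with an inhomogeneous remainder, one may iteratively shrink the inner exponent (at the cost of enlarging the ball by a controlled bounded factor, which we keep within $4B_r$) while preserving the form of the remainder. The main subtlety, and where I expect the real work to hide, is to propagate the error term $r\|\mathcal{L}f\|_{L^\infty(\cdot)}$ faithfully through the iteration — this is precisely the technical content of Appendix \ref{App:ste}, which presumably states a Stredulinsky/Iwaniec-style lemma with inhomogeneous data. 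Combining the resulting reverse Hölder estimate with the opening Jensen + $(P_{p_0})$ bound yields exactly the inequality claimed in the proposition.
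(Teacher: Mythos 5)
Your proof follows essentially the same route as the paper: you reduce to a reverse H\"older estimate for $|\nabla f|$ via Jensen's inequality and $(P_{p_0})$, obtain an initial reverse H\"older inequality by combining the $L^{p_0}$ Caccioppoli inequality from Proposition~\ref{prop:Caccioppoli} with the Keith-Zhong improvement $(P_{p_0})\Rightarrow(P_{p_0-\eps})$ and the Sobolev--Poincar\'e inequality $(P_{p_0-\eps,p_0})$ from Lemma~\ref{met}, and then apply the reverse H\"older self-improvement from Theorem~\ref{thm:imp} with the regular functional carrying the $\mathcal{L}f$ remainder. The only cosmetic difference is that the paper keeps the remainder as $r(\aver{2B}|\mathcal{L}f|^{p_0}d\mu)^{1/p_0}$ through the self-improvement and only passes to the $L^\infty$ norm at the very end, but this does not change the argument.
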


Let us first recall the following folklore result (see for instance \cite[Theorem 5.1, 1.]{HaKo}, \cite[Theorem 2.7]{FPW} for similar statements).

\begin{lemma}\label{met} Let $(M,d,\mu, {\mathcal E})$  be  a  doubling metric  measure   Dirichlet space with a ``carr\'e du champ''.  Assume  that
$(P_p)$ holds for some $1\le p<+\infty$. Then if $q\in (p,+\infty)$ is such that  $\nu\left(\frac{1}{p}-\frac{1}{q}\right)\le 1$,
the following Sobolev-Poincar\'e inequality holds:
\begin{equation}\label{Ppq}\tag{$P_{p,q}$}
 \left(\aver{B_r} \left| f-\aver{B_r}f\,d\mu\right|^{q} d\mu \right)^{1/q}\lesssim r \left(\aver{B_r} |\nabla f|^{p} d\mu \right)^{1/p},
  \end{equation}
for all $f\in\mathcal{F}$, $r>0$, and all balls $B_r$ with radius $r$.
\end{lemma}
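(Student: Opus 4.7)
The plan is to combine a telescoping chain argument on dyadic balls with a Hedberg-type fractional embedding, which is the standard route to Sobolev--Poincar\'e inequalities on doubling metric measure spaces.

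First, for a.e.\ Lebesgue point $x\in B_r$, I would telescope along the dyadic chain $B_i(x) := B(x,2^{-i}r)\subset B_r$, $i\ge 0$. As in the proof of Lemma \ref{lemma:meyers}, doubling gives
$$
\left|f(x)-\aver{B_r}f\,d\mu\right|\;\le\;\sum_{i\ge 0}\left|\aver{B_{i+1}(x)}f\,d\mu-\aver{B_i(x)}f\,d\mu\right|\;\lesssim\;\sum_{i\ge 0}\left(\aver{B_i(x)}\bigl|f-\aver{B_i(x)}f\,d\mu\bigr|^{p}d\mu\right)^{1/p}.
$$
Applying $(P_p)$ on each $B_i(x)$ converts oscillations into gradient averages, producing
$$
\left|f(x)-\aver{B_r}f\,d\mu\right|\;\lesssim\;r\,J(x),\qquad J(x):=\sum_{i\ge 0}2^{-i}\left(\aver{B_i(x)}|\nabla f|^{p}d\mu\right)^{1/p}.
$$
Thus the task reduces to estimating $\|J\|_{L^q(B_r)}$ in terms of $\||\nabla f|\|_{L^p(B_r)}$; here $J$ plays the role of a discretized Riesz potential of the function $g:=|\nabla f|^{p}\mathbf{1}_{B_r}$.

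Second, I would establish a pointwise Hedberg-type bound for $J$. Given $x\in B_r$ and an integer $k\ge 0$, split $J(x)=S_k^{\,\text{low}}(x)+S_k^{\,\text{high}}(x)$ at index $k$. The low-frequency sum is estimated by the Hardy--Littlewood maximal function $Mg$ on $(M,d,\mu)$:
$$
S_k^{\,\text{low}}(x)\;=\;\sum_{i\ge k}2^{-i}\bigl(\aver{B_i(x)}g\,d\mu\bigr)^{1/p}\;\lesssim\;2^{-k}\bigl(Mg(x)\bigr)^{1/p}.
$$
For the high-frequency sum, $\eqref{dnu}$ gives $\aver{B_i(x)}g\,d\mu\lesssim 2^{i\nu}\aver{B_r}g\,d\mu$, whence
$$
S_k^{\,\text{high}}(x)\;=\;\sum_{0\le i<k}2^{-i}\bigl(\aver{B_i(x)}g\,d\mu\bigr)^{1/p}\;\lesssim\;2^{k(\nu/p-1)}\bigl(\aver{B_r}g\,d\mu\bigr)^{1/p}
$$
whenever $\nu/p>1$ (the case $\nu/p\le 1$ being already direct). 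Optimizing in $k$ balances the two terms at the level $2^{-k}\simeq\bigl[(Mg)^{-1}\aver{B_r}g\bigr]^{1/(\nu/p)}$ to the power required, yielding a pointwise inequality of the form
$$
J(x)\;\lesssim\;\bigl(Mg(x)\bigr)^{\theta/p}\left(\aver{B_r}g\,d\mu\right)^{(1-\theta)/p},
$$
with $\theta\in(0,1]$ chosen so that the exponent of $Mg$ matches the Sobolev scaling $\theta=p/q$ when $\nu(1/p-1/q)\le 1$.

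Third, having this pointwise estimate, I would integrate over $B_r$ and use the standard $L^{q/p}$-boundedness of the Hardy--Littlewood maximal operator on doubling spaces (which requires $q/p>1$): this yields
$$
\left(\aver{B_r}J^{q}\,d\mu\right)^{1/q}\;\lesssim\;\left(\aver{B_r}g\,d\mu\right)^{1/p}\;=\;\left(\aver{B_r}|\nabla f|^{p}\,d\mu\right)^{1/p},
$$
which combined with the first step delivers exactly $(P_{p,q})$.

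The main obstacle is the borderline case $\nu(1/p-1/q)=1$, where the optimization of $k$ produces a logarithmic loss; the strict subcritical case $\nu(1/p-1/q)<1$ is immediate from the Hedberg balancing above. At the endpoint one recovers the inequality either by applying the subcritical result with some $q'<q$ arbitrarily close to $q$ and using doubling and H\"older to pass to $q$, or (in the spirit of the references \cite{HaKo,FPW} cited in the lemma) by a refined Trudinger-type estimate. In all cases, once the pointwise bound by the discrete Riesz potential is established, the argument is elementary.
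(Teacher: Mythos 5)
Your chaining step and the Hedberg splitting are fine (modulo the harmless point that the chain balls $B_i(x)$, $x\in B_r$, are only contained in $2B_r$, so you first get the weak form with $\aver{2B_r}|\nabla f|^p\,d\mu$ on the right and must then invoke the weak-to-strong improvement of Theorem \ref{thm:wpoincare}), but the integration step that closes the argument fails. Put $g:=|\nabla f|^p\Eins_{2B_r}$ and $\bar g:=\aver{2B_r}g\,d\mu$; you only know $g\in L^1$. With your choice $\theta=p/q$ the pointwise bound reads $J\lesssim (Mg)^{1/q}\,\bar g^{\,1/p-1/q}$, so $J^q\lesssim Mg\,\bar g^{\,q/p-1}$: integrating over $B_r$ would require the $L^1$-boundedness of the Hardy--Littlewood maximal operator, which is false, while the $L^{q/p}$-boundedness you invoke is of no use here, since applying it to $g$ would presuppose $|\nabla f|\in L^q$, exactly what cannot be assumed. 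The argument can be repaired when $\nu\left(\frac1p-\frac1q\right)<1$: the balancing actually yields the better exponent $1/q^*$, where $1/q^*:=1/p-1/\nu$, namely $J\lesssim (Mg)^{1/q^*}\bar g^{\,1/\nu}$, and since then $q/q^*<1$ one concludes not with $L^{q/p}$-boundedness but with Kolmogorov's inequality (the weak $(1,1)$ bound for $M$ combined with an exponent $<1$), which gives $\aver{B_r}(Mg)^{q/q^*}\,d\mu\lesssim\bar g^{\,q/q^*}$ and hence $\left(\aver{B_r}J^q\,d\mu\right)^{1/q}\lesssim\bar g^{1/p}$. (The case $p=\nu$ also needs extra care, since your high-frequency sum is then of size $k$ rather than a convergent geometric series.)

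The borderline case $\nu\left(\frac1p-\frac1q\right)=1$, which the lemma explicitly includes, is therefore a genuine gap, and neither of your proposed fixes works: H\"older runs in the wrong direction (an $L^{q'}$-oscillation bound with $q'<q$ does not control the $L^q$-oscillation), lowering $p$ via \cite{KZ} only makes $q$ supercritical for the new exponent, and a Trudinger-type estimate concerns the borderline $p=\nu$, not the critical exponent $q=\nu p/(\nu-p)$. What the endpoint actually requires is the truncation (Maz'ya) method: your Hedberg bound does give, via the weak $(1,1)$ maximal estimate, a weak-type $(p,q^*)$ Sobolev--Poincar\'e inequality, and this is upgraded to strong type by applying it to truncations $\min(\max(f-a,0),b)$ of $f$, which uses the truncation property of the carr\'e du champ. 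This is precisely how the paper proves Lemma \ref{met}: it quotes \cite[Theorem 5.1]{HaKo} (parts 1, 2, 3 according to $p<\nu$, $p>\nu$, $p=\nu$) and observes that the truncation property holds in the Dirichlet-form setting by \cite[Section 3,(o)]{mosco}. So your self-contained outline is viable only for $\nu\left(\frac1p-\frac1q\right)<1$ (which, incidentally, is all that is needed in Proposition \ref{prop}), and to give the lemma as stated it must be completed by the weak-type plus truncation argument.
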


\begin{proof} This result is well-known if $1<p<\nu$ (see \cite[Theorem 5.1, 1.]{HaKo}). Note that the truncation property holds in our setting (see \cite[Section 3,(o)]{mosco}). If $p>\nu$, a stronger  $L^\infty$ inequality is true (\cite[Theorem 5.1, 2.]{HaKo}), and one gets the above by integration. If $p=\nu$, one deduces the claim from \cite[Theorem 5.1, 3.]{HaKo} by bounding from above the function $t\to t^q$  by an exponential. 
\end{proof}

\begin{proof}[Proof of Proposition $\ref{prop}$]
Proposition \ref{prop:Caccioppoli}   yields
\begin{equation*}
 \left(\aver{B_r} | \nabla f|^{p_0} \, d\mu \right)^{1/p_0}  \lesssim \frac{p_0\text{-}\osc_{2B_r}(f)}{r}+ r\left(\aver{2B_r} | \mathcal{L}f|^{p_0} \, d\mu \right)^{1/p_0}
\end{equation*}
for every ball $B_r$ of radius $r$ and every $f\in\mathcal{D}$.
From \cite{KZ} we know that $(P_{p_0})$ self-improves into $(P_{p_0-\epsilon})$ for some $\epsilon>0$. Then , according to Lemma \ref{met},  if $\epsilon$ is small enough, $(P_{p_0-\epsilon})$ self-improves again into the Sobolev-Poincar\'e inequality $(P_{p_0,p_0-\epsilon})$
which yields
$$  p_0\text{-}\osc_{2B_r}(f) \lesssim r \left(\aver{2B_r} |\nabla f|^{p_0-\epsilon} d\mu \right)^{1/(p_0-\epsilon)} .$$
We therefore have 
\begin{equation}\label{psl}
 \left(\aver{B_r} | \nabla f|^{p_0} \, d\mu \right)^{1/p_0}  \lesssim \left(\aver{2B_r} |\nabla f|^{p_0-\epsilon}\, d\mu \right)^{1/(p_0-\epsilon)}+ r\left(\aver{2B_r} | \mathcal{L}f|^{p_0} \, d\mu \right)^{1/p_0}.
\end{equation}
Let us apply Theorem \ref{thm:imp} to the functional 
$$ a(B) := r \left(\aver{2B} | \mathcal{L}f|^{p_0} \, d\mu \right)^{1/p_0},$$
which is regular, uniformly with respect to $f$. This gives
\begin{align*}
 \left(\aver{B_r} | \nabla f|^{p_0} d\mu \right)^{1/p_0} \lesssim \left(\aver{2B_r} |\nabla f|^2\, d\mu \right)^{1/2}+ r\left(\aver{4B_r} | \mathcal{L}f|^{p_0} \, d\mu \right)^{1/p_0}.
\end{align*}
Using H\"older's inequality and $(P_{p_0})$, we deduce that
\begin{align*}
\left(\aver{B_r} | f - \aver{B_r} f \,d\mu |^2 \, d\mu \right)^{1/2} & \leq \left(\aver{B_r} | f - \aver{B_r} f \,d\mu |^{p_0} \, d\mu \right)^{1/p_0} \\
& \lesssim  r\left(\aver{B_r} | \nabla f|^{p_0} d\mu \right)^{1/p_0} \\
& \lesssim  r\left(\aver{2B_r} |\nabla f|^{2}\, d\mu \right)^{1/2}+ r^2\left(\aver{4B_r} | \mathcal{L}f|^{p_0} \, d\mu \right)^{1/p_0}.
\end{align*}
Bounding the last term by $r^2 \| \mathcal{L}f \|_{L^\infty(4B_r)}$ then gives the desired estimate.
\end{proof}

We may now prove Theorem $\ref{thm:gpppp2}$:

\begin{proof}[Proof of Theorem \ref{thm:gpppp2}] First we already have seen that the combination $(G_p)$ with $(P_p)$ implies \eqref{due} (Proposition \ref{prop:ppgp}) and it is known that it yields \eqref{FKR}. \\
By Proposition \ref{prop}, the combination $(G_p)$ with $(P_p)$ (for some $p>2$) implies a weak Poincar\'e inequality $(w-P_2)$ for all harmonic functions. Then from Theorem \ref{thm:poincare}, we deduce a weak Poincar\'e inequality for every function of ${\mathcal D}$.
We conclude to the strong Poincar\'e inequality by Theorem \ref{thm:wpoincare}.
\end{proof}

An obvious by-product of Theorem  \ref{thm:gpppp2} is the following monotonicity property for $(G_p)+(P_p)$.

\begin{coro}\label{mono}  Let $(M,d,\mu, {\mathcal E})$ be a metric  measure   Dirichlet space with a ``carr\'e du champ'' satisfying \eqref{d}. Then $(G_p)+(P_p)$ for some $p>2$ implies  $(G_q)+(P_q)$ for all $q\in [2,p)$. 
\end{coro}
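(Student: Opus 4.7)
The plan is to derive the corollary as a quick consequence of Theorem \ref{thm:gpppp2}, combined with two well-known facts: the monotonicity of the scale-invariant Poincar\'e inequalities in $p$, and the automatic validity of $(G_2)$ in our setting.

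First, I would apply Theorem \ref{thm:gpppp2} to the hypothesis $(G_p)+(P_p)$ and extract $(P_2)$. Once $(P_2)$ is at hand, the standard monotonicity recalled in the introduction (namely $(P_p)\Rightarrow(P_q)$ for $q\ge p$, cf.\ \cite{HaKo}) immediately yields $(P_q)$ for every $q\in[2,p)$.

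Next, I would observe that $(G_2)$ comes for free: by the very definition of the Dirichlet form and by the analyticity of $(e^{-t\mathcal{L}})_{t>0}$ on $L^2(M,\mu)$, one has
\begin{equation*}
\||\nabla e^{-t\mathcal{L}}f|\|_2^2 = \mathcal{E}(e^{-t\mathcal{L}}f,e^{-t\mathcal{L}}f) = \langle \mathcal{L}e^{-2t\mathcal{L}}f,f\rangle \lesssim t^{-1}\|f\|_2^2,
\end{equation*}
the last bound following from the spectral theorem. Combined with the assumed $(G_p)$, and since the map $f\mapsto |\nabla e^{-t\mathcal{L}}f|$ is sublinear by \eqref{eq:carre}, Marcinkiewicz interpolation supplies $(G_q)$ with the uniform bound $\sup_{t>0}\sqrt{t}\,\||\nabla e^{-t\mathcal{L}}|\|_{q\to q}<+\infty$ for every $q\in[2,p]$.

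Combining the two ingredients yields $(G_q)+(P_q)$ for every $q\in[2,p)$ (in fact also for $q=p$, trivially). There is no genuine obstacle: the corollary packages Theorem \ref{thm:gpppp2} together with two standard facts, and the only point deserving a line of care is the interpolation of the sublinear operator $f\mapsto |\nabla e^{-t\mathcal{L}}f|$ between $L^2$ and $L^p$, which is routine.
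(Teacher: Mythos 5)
Your proof is correct and unpacks precisely what the paper treats as "an obvious by-product" of Theorem~\ref{thm:gpppp2}: that theorem yields $(P_2)$, hence $(P_q)$ for all $q\ge 2$ by the standard monotonicity of Poincar\'e inequalities, while $(G_q)$ for $2\le q\le p$ follows by interpolating the sublinear operator $f\mapsto |\nabla e^{-t\mathcal{L}}f|$ between the automatic $(G_2)$ and the assumed $(G_p)$ (an interpolation the paper itself invokes, e.g.\ in the proof of Proposition~\ref{prop:Caccioppoli}). No gap; this is the same reasoning the paper leaves implicit.
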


\appendix

\section{From Poincar\'e to De Giorgi} \label{AppDG}

In this appendix, we give a self-contained proof for the fact that, under \eqref{d}, \eqref{P2} implies the De Giorgi property ($DG_2$), as stated in Proposition \ref{propDG}.  
 One method is to use the  elliptic Moser iteration process  from \cite{Moser}, see for instance \cite[Sections 5 and 6]{AC2}. The proof is given there in a discrete time and space setting,
 but adapts to our current setting. Another proof in \cite{AC2} by-passes the difficult part of the Moser iteration process, namely the John-Nirenberg lemma,
 and uses instead  the self-improvement property of Poincar\'e inequalities. This is the one we will present here.

In this appendix, we will assume for simplicity that $\diam(M)=+\infty$. If on the contrary  $\diam (M)<+\infty$, it is enough to assume that $R \leq \delta\, \diam(M)$,
where $\delta$ is the parameter that has to appear in \eqref{FK} in that case, and to use doubling at the end of the argument.

We first need to state  a  version of the  Caccioppoli inequality \eqref{cacp} in $L^2$, but for subharmonic functions. 

\begin{lemma} \label{prop:Cacc-harmonic}
Let $(M,d,\mu, {\mathcal E})$  be  a doubling metric  measure   Dirichlet space with a ``carr\'e du champ''.
For every $x \in M$, $0<r<R$ and every 
$u\in \mathcal {D}$ with $u\mathcal{L}u \leq 0$ on $B(x,R)$, one has
\begin{equation}\label{cac2}
\int_{B(x,r)} |\nabla u|^2 \,d\mu \lesssim \frac{1}{(R-r)^2} \int_{B(x,R)} \left|u\right|^2 \,d\mu.
\end{equation}
\end{lemma}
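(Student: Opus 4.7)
The plan is the classical Caccioppoli argument, adapted to the Dirichlet form setting. First, I would pick a Lipschitz cutoff function $\eta\colon M\to[0,1]$ with $\eta\equiv 1$ on $B(x,r)$, $\supp\eta\subset B(x,R)$, and Lipschitz constant at most $2/(R-r)$. Such $\eta$ is built explicitly from the distance function, e.g.
$$\eta(y) := \min\!\left(1,\max\!\left(0,\tfrac{R-d(y,x)}{R-r}\right)\right).$$
The definition \eqref{defd} of $d$ guarantees that any Lipschitz function with Lipschitz constant $\le L$ satisfies $|\nabla\eta|\le L$ $\mu$-a.e.; in particular $|\nabla\eta|\lesssim (R-r)^{-1}$ on $M$ and $\eta\in\mathcal F\cap L^\infty\cap \mathcal C_0(M)$.

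Next, I would test $\mathcal L u$ against $\eta^2 u$. The function $\eta^2 u$ lies in $\mathcal F$ (after the standard truncation $u_n=\max(-n,\min(u,n))$ if $u$ is not a priori bounded, then passing to the limit), and by the Leibniz rule \eqref{eq:leibniz} together with \eqref{eq:carre},
$$\int_M \eta^2 u\, \mathcal Lu\,d\mu = \mathcal E(u,\eta^2 u) = \int_M\eta^2|\nabla u|^2\,d\mu + 2\int_M u\eta\,\Upsilon(u,\eta)\,d\mu.$$
Since $\eta$ is supported in $B(x,R)$ and $u\mathcal Lu\le 0$ there, the left-hand side is $\le 0$. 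Applying $|\Upsilon(u,\eta)|\le|\nabla u||\nabla\eta|$ and Young's inequality $2|u|\eta|\nabla u||\nabla\eta|\le\tfrac12\eta^2|\nabla u|^2+2u^2|\nabla\eta|^2$ to the cross term yields
$$\int_M\eta^2|\nabla u|^2\,d\mu\ \le\ \tfrac12\int_M\eta^2|\nabla u|^2\,d\mu + 2\int_M u^2|\nabla\eta|^2\,d\mu.$$
Absorbing the first term on the right, using $\eta\equiv 1$ on $B(x,r)$ and $|\nabla\eta|\lesssim(R-r)^{-1}$ on $B(x,R)$, gives \eqref{cac2}.

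The only delicate point is justifying the Leibniz expansion of $\Upsilon(u,\eta^2 u)$ without assuming $u\in L^\infty$. This is handled in the standard fashion: apply the identity first to the bounded truncation $u_n$ (which remains in $\mathcal D$ with $u_n\mathcal Lu_n\le 0$ on $B(x,R)$ by truncation properties of Dirichlet forms), and then pass to the limit $n\to\infty$ using dominated convergence, the contractivity of truncations on $\mathcal F$, and monotone convergence of $|\nabla u_n|^2$ to $|\nabla u|^2$ on the level set $\{|u|\le n\}$.
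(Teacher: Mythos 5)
Your proof is correct and takes essentially the same approach as the paper: test $\mathcal{L}u$ against $\chi^2 u$ with a Lipschitz cutoff $\chi$, expand the resulting energy via the Leibniz rule, and absorb the cross term. The only cosmetic differences are that the paper uses the defining identity \eqref{def} to split $\mathcal{E}(\chi^2 u,u)$ (rather than expanding $d\Gamma(\chi^2u,u)$ directly) and then absorbs via Cauchy--Schwarz and division by $\sqrt{I}$ instead of Young's inequality, both leading to the same estimate; be aware that your claim that truncation preserves $u_n\in\mathcal{D}$ and $u_n\mathcal{L}u_n\le 0$ is not literally a standard Dirichlet-form fact and would require more care, though the same integrability issue is glossed over in the paper as well.
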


\begin{proof}  Consider a  function $\chi$ belonging to $\mathcal{F}$, supported on $B(x,R)$, with values in $[0,1]$, and such that $\chi\equiv1$ on $B(x,r)$ and $\| |\nabla \chi| \|_\infty \lesssim (R-r)^{-1}$. Such a function can easily be built in our setting from the metric $d$ (see for instance \cite[Section 2.2.6]{GSC} for details).



Since  $u\mathcal{L}u \leq 0$ on $B(x,R)$ one may write
\begin{align*} 
0 & \leq - \int_M \chi^2 u \mathcal{L}u \, d\mu = -\mathcal{E}(\chi^2 u, u ) \\
& =-\frac{1}{2} \mathcal{E}(\chi^2  , u^2)- \int_M \chi^ 2 |\nabla u|^2 d\mu,
\end{align*}
where one uses \eqref{def}.
Consequently, by   \eqref{eq:leibniz} and \eqref{eq:carre},
$$ I:=\int_M \chi^ 2 |\nabla u|^2 d\mu \leq \frac{1}{2}\left|\mathcal{E}(\chi^2  , u^2)\right|\leq  2\int_M \chi |u| |\nabla \chi| |\nabla u| \, d\mu$$
and we deduce by Cauchy-Schwarz that
$$ I \lesssim \frac{1}{R-r} \int_M |u| |\nabla u| \chi \, d\mu \lesssim \frac{1}{R-r} \sqrt{I} \left[ \int_{B(x,R)} |u|^2  \, d\mu \right]^ {1/2},$$
which yields \eqref{cac2}.  
\end{proof}

 First, the relative Faber-Krahn  inequality \eqref{FK} (see Section \ref{section:2} for a definition) implies an $L^2$ mean value property for harmonic functions. 

\begin{proposition} \label{propMax}
Let $(M,d,\mu, {\mathcal E})$  be  a doubling metric  measure   Dirichlet space with a ``carr\'e du champ'' satisfying \eqref{UE}. Assume \eqref{FK}. Then one has, for all $R>0$, $x_0 \in M$, and $u \in \mathcal{F}$ harmonic in $B(x_0,R)$,
\[
	\max_{x \in B(x_0,R/2)} u(x) 
		\lesssim \left(\aver{B(x_0,R)} u_{+}^2 \,d\mu\right)^{1/2}.
\]
\end{proposition}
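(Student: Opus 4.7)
The plan is to run a Moser-type iteration built on two ingredients: a localized Sobolev inequality coming from \eqref{FK}, and the Caccioppoli inequality of Lemma \ref{prop:Cacc-harmonic} applied to powers of $u_+$. Since $u \leq u_+$, it suffices to prove the estimate with $u_+$ on the left-hand side.

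\textbf{Step 1 (Sobolev inequality from Faber--Krahn).} From \eqref{FK}, together with \eqref{d}, one derives a localized Sobolev inequality of the form
\[
	\left(\aver{B} |g|^{2\kappa}\,d\mu\right)^{1/(2\kappa)}
		\lesssim r\left(\aver{B} |\nabla g|^2\,d\mu\right)^{1/2},
\]
valid for some $\kappa>1$, every ball $B = B(x,r)$, and every $g\in\mathcal{F}$ supported in $B$. This is the standard equivalence of Faber--Krahn with a Sobolev inequality on subsets (see \cite[Section 3.3]{BCS} or \cite[Theorem 2.5]{HS}); the proof goes through the level-set formulation of \eqref{FK}.

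\textbf{Step 2 (Caccioppoli for $u_+^p$).} Since $u$ is harmonic on $B(x_0,R)$, one can show, using the Leibniz rule \eqref{eq:leibniz}, the chain rule for the carr\'e du champ and the normal contraction property, that $v := u_+^p$ satisfies $v\,\mathcal{L}v \leq 0$ on $B(x_0,R)$ in the weak sense, for every $p\geq 1$. Consequently Lemma \ref{prop:Cacc-harmonic} applies to $v$ and gives, for $0 < r' < r \leq R$,
\[
	\int_{B(x_0,r')} |\nabla(u_+^p)|^2\,d\mu
		\lesssim \frac{1}{(r-r')^2}\int_{B(x_0,r)} u_+^{2p}\,d\mu.
\]

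\textbf{Step 3 (One step of iteration).} Pick a cutoff $\chi \in \mathcal{F}$ supported in $B(x_0,r)$ with $\chi \equiv 1$ on $B(x_0,r')$ and $\||\nabla\chi|\|_\infty \lesssim (r-r')^{-1}$. Applying the Sobolev inequality of Step 1 to $\chi\, u_+^p$ on $B(x_0,r)$ and using the Caccioppoli bound of Step 2 together with doubling, one obtains, after rewriting in averages,
\[
	\left(\aver{B(x_0,r')} u_+^{2p\kappa}\,d\mu\right)^{1/(2p\kappa)}
		\leq \left(\frac{Cr}{r-r'}\right)^{1/p}
		\left(\aver{B(x_0,r)} u_+^{2p}\,d\mu\right)^{1/(2p)},
\]
with $C$ independent of $p$, $r$, $r'$, $u$.

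\textbf{Step 4 (Iteration).} Set $p_k := \kappa^k$ and $r_k := \frac{R}{2}(1+2^{-k})$, so that $r_0 = R$ and $r_k \downarrow R/2$. Iterating the inequality of Step 3 with $(r,r',p) = (r_k, r_{k+1}, p_k)$ yields
\[
	\left(\aver{B(x_0,r_{k+1})} u_+^{2p_{k+1}}\,d\mu\right)^{1/(2p_{k+1})}
		\leq \prod_{j=0}^{k} \bigl(C\, 2^{j+2}\bigr)^{1/p_j}
		\left(\aver{B(x_0,R)} u_+^2\,d\mu\right)^{1/2}.
\]
Since $\sum_{j\geq 0} (j+2)/\kappa^j < +\infty$, the product converges to a finite constant, and letting $k\to +\infty$ gives
\[
	\esssup_{B(x_0,R/2)} u_+
		\lesssim \left(\aver{B(x_0,R)} u_+^2\,d\mu\right)^{1/2},
\]
which is the claim.

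\textbf{Main obstacle.} The delicate point is Step 2: establishing that powers of $u_+$ are subharmonic in the form sense $v\,\mathcal{L}v \leq 0$, so that Lemma \ref{prop:Cacc-harmonic} is applicable. In the abstract Dirichlet form setting this relies on the strong locality, the Leibniz rule \eqref{eq:leibniz}, the chain rule for the carr\'e du champ, and the truncation/normal contraction property (used for instance in the same way as in \cite[Section 3]{FOT}). Once this is granted, Steps 1, 3, 4 are the classical Moser machinery and essentially just bookkeeping.
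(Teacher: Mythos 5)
Your proof takes the Moser iteration route (iterating powers $u_+^{p_k}$ via Caccioppoli and a Sobolev inequality derived from \eqref{FK}), whereas the paper runs a De Giorgi level-set iteration: it applies \eqref{FK} to $\chi(u-h)_+$ and Lemma \ref{prop:Cacc-harmonic} to $(u-h)_+$, obtaining a superlinear recursion on the quantities $u(h,\rho)=\int_{A(h,\rho)}(u-h)^2\,d\mu$ that is closed by a Chebyshev-type inequality and then iterated over levels $k_n\uparrow d$ and radii $\rho_n\downarrow R/2$ to force $u(d,R/2)=0$. The level-set approach needs only the single fact that $(u-h)_+$ is subharmonic, supplied directly by \cite[Lemma 2]{ST1} because $t\mapsto (t-h)_+$ is a normal contraction; it never manipulates powers of $u$. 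Your Step~2 is where the two routes diverge, and you rightly flag it as the delicate point, but the justification you offer is not adequate as stated: $t\mapsto t^p$ for $p>1$ is neither a normal contraction nor globally Lipschitz, so the normal contraction property does not give $u_+^p\in\mathcal{F}$, and for unbounded $u\in\mathcal{F}$ the function $u_+^p$ need not even belong to $\mathcal{F}$. To make the plan rigorous one must work with truncated powers $\phi_N(u_+)$ (with $\phi_N$ convex, increasing, globally Lipschitz, equal to $t^p$ on $[0,N]$), establish subharmonicity via the chain rule for the energy measure, run the iteration with $\phi_N$ in place of the bare power, and pass to the limit $N\to\infty$ at the end. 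This is standard but genuinely non-trivial Moser bookkeeping in the abstract Dirichlet form setting, and it is precisely the extra layer of technicality that the paper's De Giorgi route is designed to avoid.
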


\begin{proof}
Let $x_0\in M$, $R>0$ and $u\in  \mathcal{F}$ such that  $\mathcal{L}u=0$ in $B(x_0,R)$.
For $h,r>0$ define
\begin{align*}
	M(r)&=\max_{x \in B(x_0,r)} u(x), \qquad 
	m(r)=\min_{x \in B(x_0,r)} u(x),\\
	A(h,r) &= \{x \in B(x_0,r); \ u(x) \geq h\}, \qquad
	a(h,r)= \mu(A(h,r)).
\end{align*}

Consider  $\rho<r\leq 2 \rho$ with $r\leq R$. Let $\chi$ be a Lipschitz function supported on $B(x_0,r)$, that equals $1$ on $B(x_0,\rho)$ and such that $\|\nabla \chi\|_\infty \lesssim (r-\rho)^{-1}$.
Applying \eqref{FK}  to $\chi (u-h)_+$ (which is supported on $B(x_0,r)$) for $h \in \R$ gives 
\begin{align*}
\lefteqn{\int_{B(x_0,\rho)} (u-h)^2_+ \,d\mu  \leq \int \chi^ 2 (u-h)^2_+ \,d\mu } & & \\
 & & \lesssim r^2 \left(\frac{a(h,r)}{V(x_0,r)}\right)^{2\beta} \left[\int_{B(x_0,r)} |\nabla(u-h)_+|^2 \,d\mu + \frac{1}{(r-\rho)^2} \int_{B(x_0,r)} (u-h)_+ ^2 \,d\mu\right],
\end{align*}
where we use the Leibniz rule for the gradient.
 Then, since $u-h$ is harmonic, that is $\mathcal{L}(u-h)=0$ in $B(x_0,R)$, by \cite[Lemma 2]{ST1} we deduce that $(u-h)_+$ is nonnegative and subharmonic, that is  $\mathcal{L}(u-h)_+ \leq 0$, in $B(x_0,R)$. So $(u-h)_+\mathcal{L}(u-h)_+ \leq 0$ in $B(x_0,R)$.
We then deduce from \eqref{cac2} that 
\begin{align*}
	\int_{B(x_0,\rho)} (u-h)^2_+ \,d\mu  
	& \lesssim \frac{r^2}{(r-\rho)^2} V(x_0,r)^{-2\beta} a(h,r)^{2\beta} \int_{B(x_0,r)} (u-h)_+^2 \,d\mu \\
	& \lesssim \frac{\rho^2}{(r-\rho)^2} V(x_0,\rho)^{-2\beta} a(h,r)^{2\beta} \int_{B(x_0,r)} (u-h)_+^2 \,d\mu,
\end{align*}
where we used the doubling property and $\rho\simeq r$.
Set
\begin{align*}
	u(h,\rho)= \int_{B(x_0,\rho)} (u-h)^2_+ \,d\mu  = \int_{A(h,\rho)} (u-h)^2 \,d\mu.
\end{align*}
One has 
\begin{align} \label{eq:maxpr1}
	u(h,\rho) \lesssim \frac{\rho^2}{(r-\rho)^2} V(x_0,\rho)^{-2\beta} a(h,r)^{2\beta}  
	u(h,r).
\end{align}
Moreover, for $h>k$,
\begin{align*}
	(h-k)^2 a(h,r) \leq \int_{A(h,r)} (u-k)^2 \,d\mu \leq \int_{A(k,r)} (u-k)^2 \,d\mu,
\end{align*}
that is
\begin{align} \label{eq:maxpr2}
	a(h,r) \leq \frac{1}{(h-k)^2} u(k,r). 
\end{align}
Then \eqref{eq:maxpr1} and \eqref{eq:maxpr2} yield for $\rho<r\leq 2\rho$
\begin{align} \label{eq:maxpr3}
	u(h,\rho) \leq C \frac{\rho^2}{(r-\rho)^2} V(x_0,\rho)^{-2\beta} \frac{1}{(h-k)^{4\beta}} u(k,r)^{1+2\beta}.
\end{align}
Set $k_n= (1-\frac{1}{2^n})d$ and $\rho_n=\frac{R}{2}(1+\frac{1}{2^n})$, $n \in \N$, where 
\begin{align*}
	d=C^{\frac{1}{4\beta}} 2^{\frac{|\nu\beta -1|}{2\beta}} 2^{\frac{1}{4\beta^2}+\frac{2}{\beta}+1} V(x_0,R)^{-{1/2}} u(0,R)^{\frac12}.
\end{align*}
Inequality \eqref{eq:maxpr3} (which can be applied since $\rho_{n+1}<\rho_n\leq 2 \rho_{n+1}$) yields
\begin{align*}
	u(k_{n+1},\rho_{n+1})
		\leq C \frac{2^{2(n+2)+4\beta (n+1)} \rho_{n+1}^2}{R^2 d^{4\beta}} V(x_0,\rho_{n+1})^{-2\beta} u(k_n,\rho_n)^{1+2\beta},
\end{align*}
therefore by \eqref{dnu}, 
\begin{align*} 
	u(k_{n+1},\rho_{n+1})
		\leq C 2^{2|\nu\beta -1|} \frac{2^{2(n+2)+4 \beta(n+1)}}{d^{4\beta}} V(x_0,R)^{-2\beta} u(k_n,\rho_n)^{1+2\beta}.
\end{align*}
Due to the definition of $d$, this yields
\begin{align} \label{eq:maxpr4}
	u(k_{n+1},\rho_{n+1})
		\leq 2^{2(n-2)+4 \beta n -\frac{1}{\beta}} u(0,R)^{-2\beta} u(k_n,\rho_n)^{1+2\beta}.
\end{align}
From \eqref{eq:maxpr4} one proves by induction that
\begin{align*}
	u(k_n,\rho_n) \leq \frac{u(0,R)}{2^{(2+\beta^{-1}) n}}, \qquad \forall n \in \N.
\end{align*}
By letting $n$ go to infinity, one concludes that $u(d,R/2) =0$.
This means that for all $x \in B(x_0,R/2)$,
\begin{align*}
	u(x) \leq d=C\left(\frac{1}{V(x_0,R)} \int_{A(0,R)} u^2 d\mu \right)^{1/2}.
\end{align*}
\end{proof}

\bigskip

To go further, we will need to use scaled Poincar\'e inequalities and their consequences. Assuming \eqref{P2}, we know that there exists $\eps>0$ such that $(P_{2-\eps})$ holds \cite{KZ}. We will be working with the modified version 
\begin{align}
	\tag{$\widetilde{P}_{2-\eps}$}
 \left( \aver{B} \left| f \right|^{2-\eps} d\mu \right)^{\frac{1}{2-\eps}} \lesssim r \frac{|B|}{|\{y \in B; \ f(y)=0\}|}\left(\aver{B} |\nabla f|^{2-\eps} d\mu \right)^{\frac{1}{2-\eps}}, \label{modPp}
\end{align}
where $f$ ranges in ${\mathcal D}$ and $B$  in balls in $M$ of radius $r$. \\
The inequality $(\widetilde{P}_{2-\eps})$ is a consequence of $(P_{2-\eps})$, as can be seen by checking the inequality 
\begin{align*}
	\left(\int_B |f|^{2-\eps} \,d\mu \right)^{\frac{1}{2-\eps}} \leq C_\eps \frac{\abs{B}}{|\{y \in B; \ f(y)=0\}|} \left(\int_B |f-\aver{B} f \,d\mu|^{2-\eps} \,d\mu \right)^{\frac{1}{2-\eps}}. 
\end{align*}
To prove the latter, abbreviate $B_{N(f)}:=\{y \in B; \ f(y)=0\}$ and write
\begin{align*}
&\left(\int_B |f|^{2-\eps} \,d\mu \right)^{\frac{1}{2-\eps}}  = \left(\int_{B\setminus B_{N(f)}} |f|^{2-\eps} \,d\mu \right)^{\frac{1}{2-\eps}}\\
\leq & \left(\int_{B\setminus B_{N(f)}}  |f-\aver{B} f \,d\mu|^{2-\eps} \,d\mu \right)^{\frac{1}{2-\eps}} +   \left(\int_{B\setminus B_{N(f)}}  \left|\aver{B} f \,d\mu\right|^{2-\eps} \,d\mu \right)^{\frac{1}{2-\eps}}  \\
=  &  \left(\int_{B\setminus B_{N(f)}}  |f-\aver{B} f\, d\mu|^{2-\eps} \,d\mu \right)^{\frac{1}{2-\eps}}+ |B\setminus B_{N(f)}|^{\frac{1}{2-\eps}} \;  \left|\aver{B} f \,d\mu\right|   \\      
\le &   \left(\int_{B}  |f-\aver{B} f \,d\mu|^{2-\eps} \,d\mu \right)^{\frac{1}{2-\eps}}+ \left(\frac{|B\setminus B_{N(f)}|}{|B|}\right)^{\frac{1}{2-\eps}}    \left(\int_B |f|^{2-\eps} \,d\mu \right)^{\frac{1}{2-\eps}}. 
\end{align*}
From this, we deduce
\begin{align*}
	 \left(\int_B |f|^{2-\eps} \,d\mu \right)^{\frac{1}{2-\eps}} 
	&\leq \left(1- \left(\frac{|B\setminus B_{N(f)}|}{|B|}\right)^{\frac{1}{2-\eps}}    \right)^{-1} 
	\left(\int_{B}  |f-\aver{B} f \,d\mu|^{2-\eps} \,d\mu \right)^{\frac{1}{2-\eps}}\\
	& \lesssim \frac{|B|}{|B_{N(f)}|}
	\left(\int_{B}  |f-\aver{B} f \,d\mu|^{2-\eps} \,d\mu \right)^{\frac{1}{2-\eps}},
\end{align*}
where in the last step we have used the elementary inequality $1-(1-x)^{1/p} \geq \frac{x}{p}$ for $x \in [0,1]$ and $p \in [1,+\infty)$.

\begin{lemma} \label{lemmaDG}
Let $(M,d,\mu, {\mathcal E})$  be  a doubling metric  measure   Dirichlet space with a ``carr\'e du champ'' satisfying  \eqref{UE} and \eqref{FK}. Let $\eps \in (0,1]$ and assume $(\widetilde{P}_{2-\eps})$. Let $x_0 \in M$, $R>0$ and $u \in \mathcal{F}$ harmonic in $B(x_0,R)$. Set 
\[
	k_i = M(R)-\left(\frac{M(R)-m(R)}{2^{i+1}}\right), \quad i \in \N. 
\]
Assume that $a(k_0,R/2) \leq \frac12 V(x_0,R/2)$. Then for all integer $i \in \N^*$
\[
	\frac{a(k_i,R/2)}{V(x_0,R/2)} \leq C i^{-\eps/2},
\]
where $C$ does not depend on $x_0$, $R$, or $u$. 
\end{lemma}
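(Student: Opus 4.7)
The strategy is the classical De Giorgi level-set argument, using $(\widetilde{P}_{2-\eps})$ as the Poincaré-type tool and Lemma~\ref{prop:Cacc-harmonic} (Caccioppoli for subharmonic functions) as the energy estimate. First, for each $i\in\N$ I would consider the truncation
\[
 w_i := (u-k_i)_+ \wedge (k_{i+1}-k_i),
\]
which lies in $\mathcal{F}$ by normal contraction. The set $\{y\in B(x_0,R/2): w_i(y)=0\}$ is exactly $B(x_0,R/2)\setminus A(k_i,R/2)$; since $k_i\ge k_0$ we have $a(k_i,R/2)\le a(k_0,R/2)\le \frac12 V(x_0,R/2)$, so this zero set has measure at least $\frac12 V(x_0,R/2)$, making the quotient $|B|/|\{w_i=0\}\cap B|$ bounded by $2$ in $(\widetilde{P}_{2-\eps})$. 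Applying it on $B(x_0,R/2)$ yields
\[
 \Bigl(\aver{B(x_0,R/2)} w_i^{2-\eps}\,d\mu\Bigr)^{\frac{1}{2-\eps}} \lesssim R \Bigl(\aver{B(x_0,R/2)} |\nabla w_i|^{2-\eps}\,d\mu\Bigr)^{\frac{1}{2-\eps}}.
\]

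Next I would bound the two sides. From below: on $A(k_{i+1},R/2)$ we have $w_i \equiv k_{i+1}-k_i = (M(R)-m(R))/2^{i+2}$, so
\[
 \text{LHS} \ \ge\ (k_{i+1}-k_i)\,\bigl(a(k_{i+1},R/2)/V(x_0,R/2)\bigr)^{\frac{1}{2-\eps}}.
\]
From above: $\nabla w_i$ is supported in $\{k_i\le u\le k_{i+1}\}$ with $|\nabla w_i|\le|\nabla(u-k_i)_+|$, and by Hölder with exponents $2/(2-\eps)$ and $2/\eps$, setting $\alpha_i:=a(k_i,R/2)-a(k_{i+1},R/2)$,
\[
 \int_{B(x_0,R/2)} |\nabla w_i|^{2-\eps}\,d\mu \ \le\ \alpha_i^{\eps/2}\Bigl(\int_{B(x_0,R/2)} |\nabla(u-k_i)_+|^2\,d\mu\Bigr)^{\frac{2-\eps}{2}}.
\]
Since $u$ is harmonic in $B(x_0,R)$, the function $(u-k_i)_+$ is nonnegative and subharmonic there, so it satisfies $f\mathcal{L}f\le 0$; Lemma~\ref{prop:Cacc-harmonic} applied on $B(x_0,R/2)\subset B(x_0,R)$ gives
\[
 \int_{B(x_0,R/2)} |\nabla(u-k_i)_+|^2\,d\mu \ \lesssim\ \frac{(M(R)-k_i)^2}{R^2}\, V(x_0,R).
\]

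Combining these bounds, using $M(R)-k_i = 2(k_{i+1}-k_i)$, cancelling the common factor $(k_{i+1}-k_i)$, and invoking \eqref{d} to replace $V(x_0,R)$ by $V(x_0,R/2)$, I obtain (writing $b_i := a(k_i,R/2)/V(x_0,R/2)$)
\[
 b_{i+1} \ \lesssim\ (b_i-b_{i+1})^{\eps/2}, \qquad \text{equivalently} \qquad b_i - b_{i+1}\ \gtrsim\ b_{i+1}^{2/\eps}.
\]
A standard iteration lemma — any non-increasing non-negative sequence satisfying $b_i-b_{i+1}\ge c\, b_{i+1}^{1+\theta}$ obeys $b_i\lesssim i^{-1/\theta}$ — applied with $\theta=(2-\eps)/\eps$ gives $b_i\lesssim i^{-\eps/(2-\eps)}$, which implies $b_i\lesssim i^{-\eps/2}$ because $\eps/(2-\eps)\ge \eps/2$ for $\eps\in(0,1]$.

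The main obstacle is mostly organizational: making sure the Caccioppoli inequality applies to the truncation $(u-k_i)_+$, which is not literally covered by the pointwise hypothesis $u\mathcal{L}u\le 0$ but follows because $(u-k_i)_+$ is subharmonic in the weak sense (convex post-composition of a harmonic function) and the proof of Lemma~\ref{prop:Cacc-harmonic} only uses the integral inequality $-\int \chi^2 f\,\mathcal{L}f\,d\mu \ge 0$. The rest is bookkeeping with doubling and a one-variable recursion.
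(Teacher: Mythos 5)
Your proof is correct and follows essentially the same route as the paper: truncate at consecutive levels $k_i < k_{i+1}$, apply $(\widetilde{P}_{2-\eps})$ on $B(x_0,R/2)$ using the zero-set lower bound, use H\"older to peel off the measure of the level-set annulus, control $\int |\nabla (u-k_i)_+|^2$ via the $L^2$ Caccioppoli inequality for the subharmonic truncation, and arrive at the recursion $b_{i+1}^{2/\eps}\lesssim b_i-b_{i+1}$. The only (cosmetic) divergence is the last step: the paper simply telescopes the recursion and uses monotonicity ($i\,b_i^{2/\eps}\le\sum_{j\le i}b_j^{2/\eps}\lesssim b_0\le\tfrac12$) to get $b_i\lesssim i^{-\eps/2}$ directly, whereas you invoke a sharper iteration lemma giving $b_i\lesssim i^{-\eps/(2-\eps)}$ and then weaken; both are valid, though the telescoping route is cleaner and avoids having to verify the sharper lemma.
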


\begin{proof}
For $h>k>k_0$, set $v=(u-k)_+ \wedge (h-k)$. By assumption,
\begin{align*}
	|\{x \in B(x_0,R/2); \ v(x)=0\}|
	&= |B(x_0,R/2) \setminus A(k,R/2)| \\
	&\geq |B(x_0,R/2) \setminus A(k_0,R/2)|
	\geq {1/2} V(x_0,R/2).
\end{align*}
The Poincar\'e inequality $(\widetilde{P}_{2-\eps})$  therefore yields
\begin{align*}
	\int_{B(x_0,R/2)} \abs{v}^{2-\eps} \,d\mu 
	\lesssim  R^{2-\eps} \int_{B(x_0,R/2)} |\nabla v|^{2-\eps} \,d\mu .
\end{align*}
Hence
\begin{align*}
	(h-k)^{2-\eps} a(h,R/2) \lesssim R^{2-\eps} \int_{B(x_0,R/2)} |\nabla v|^{2-\eps} \,d\mu.
\end{align*}
Now
\begin{align*}
	(h-k)^{2-\eps} a(h,R/2) \lesssim R^{2-\eps} \int_{A(k,R/2) \setminus A(h,R/2)} |\nabla u|^{2-\eps} \,d\mu.
\end{align*}
By H\"older,
\begin{align*}
	(h-k)^{2-\eps} a(h,R/2)
	\lesssim R^{2-\eps} (a(k,R/2) - a(h,R/2))^{\eps/2} \left(\int_{A(k,R/2) \setminus A(h,R/2)} |\nabla u|^2\,d\mu \right)^{\frac{2-\eps}{2}}.
\end{align*}
Now
\begin{align*}
	\int_{A(k,R/2) \setminus A(h,R/2)} |\nabla u|^2 \,d\mu 
	\leq \int_{A(k,R/2)} |\nabla u|^2 \,d\mu 
	= \int_{B(x_0,R/2)} |\nabla (u-k)_+|^2 \,d\mu.
\end{align*}
As we already observed at the beginning of the proof of Proposition \ref{propMax} in a similar situation, $(u-k)_+$ is subharmonic,  therefore \eqref{cac2} yields
\begin{align*}
	\int_{B(x_0,R/2)} |\nabla (u-k)_+|^2 \,d\mu
	\lesssim R^{-2} \int_{B(x_0,R)} (u-k)_+^2 \,d\mu 
	\lesssim R^{-2} V(x_0,R) (M(R)-k)^2.
\end{align*}
Thus
\begin{align*}
	(h-k)^{2-\eps} a(h,R/2)
	\lesssim V(x_0,R)^{\frac{2-\eps}{2}}(M(R)-k)^{2-\eps} (a(k,R/2)-a(h,R/2))^{\frac{\eps}{2}}.
\end{align*}
Since $k_i-k_{i-1} = \frac{M(R)-k_0}{2^i}$ and $M(R) - k_{i-1} = \frac{M(R)-k_0}{2^{i-1}}$, the above inequality yields
\begin{align*}
	a(k_i,R/2)^{\frac{2}{\eps}}
	\lesssim V(x_0,R)^{\frac{2-\eps}{\eps}} (a(k_{i-1},R/2)-a(k_i,R/2)).
\end{align*}
Using the fact that $a(k_i,R/2)$ is non-increasing in $i$, one obtains
\begin{align*}
	ia(k_i,R/2)^{\frac{2}{\eps}}
	&\leq \sum_{j=1}^i a(k_j,R/2)^{\frac{2}{\eps}}
	\lesssim V(x_0,R)^{\frac{2-\eps}{\eps}} (a(k_0,R/2) - a(k_i,R/2))\\
	& \leq V(x_0,R)^{\frac{2-\eps}{\eps}} a(k_0,R/2).
\end{align*}
Hence
\begin{align*}
	\frac{a(k_i,R/2)}{V(x_0,R)} 
	\lesssim \left(i^{-1} \frac{a(k_0,R/2)}{V(x_0,R)}\right)^{\frac{\eps}{2}}
	\leq \left(\frac{i^{-1}}{2}\right)^{\frac{\eps}{2}}.
\end{align*}
\end{proof}

We are now in a position to deduce the elliptic regularity estimate \eqref{eq:ER}  introduced in Theorem \ref{thm:2} from $(P_2)$.

\begin{proposition} \label{propER}
 Let $(M,d,\mu, {\mathcal E})$  be  a doubling metric  measure   Dirichlet space with a ``carr\'e du champ'' satisfying    $(P_2)$.  Then 
 \eqref{eq:ER}  holds.
 \end{proposition}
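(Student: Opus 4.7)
The plan is to prove $(ER)$ via the classical oscillation-decay argument, combining the two main tools already established in this appendix: the $L^2$ mean value property of Proposition \ref{propMax} and the quantitative level-set decay of Lemma \ref{lemmaDG}. All the structural hypotheses they require are at hand: $(P_2)$ together with doubling gives \eqref{UE} (Proposition \ref{prop:ppgp} at $p=2$) and the relative Faber-Krahn inequality \eqref{FK}, while by Keith-Zhong \cite{KZ} one has $(P_{2-\eps})$ and hence $(\widetilde{P}_{2-\eps})$ for some $\eps\in(0,1)$.

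The heart of the argument is to establish a sup-norm oscillation decay: there exists $\theta\in(0,1)$, depending only on the ambient constants, such that for every $u$ harmonic in $B(x_0,R)$,
$$M(R/4) - m(R/4) \leq \theta\bigl(M(R) - m(R)\bigr),$$
where $M(r)=\sup_{B(x_0,r)}u$, $m(r)=\inf_{B(x_0,r)}u$. Replacing $u$ by $-u$ if needed, one may assume $a(k_0,R/2)\leq \tfrac{1}{2}V(x_0,R/2)$, so Lemma \ref{lemmaDG} yields $a(k_i,R/2)/V(x_0,R/2)\lesssim i^{-\eps/2}$ for each integer $i\geq 1$. Applying Proposition \ref{propMax} at scale $R/2$ to the harmonic function $u-k_i$, I will estimate
$$M(R/4)-k_i \lesssim \Bigl(\aver{B(x_0,R/2)}(u-k_i)_+^2\,d\mu\Bigr)^{1/2} \leq (M(R)-k_i)\Bigl(\tfrac{a(k_i,R/2)}{V(x_0,R/2)}\Bigr)^{1/2} \lesssim i^{-\eps/4}(M(R)-k_i).$$
Fixing $i$ large enough to absorb the implicit constant, and recalling $M(R)-k_i=2^{-i-1}(M(R)-m(R))$, the desired decay $M(R/4)-m(R/4)\leq \theta(M(R)-m(R))$ with $\theta<1$ follows.

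Once the oscillation decay is available, I iterate it at a point $x\in B(x_0,R/2)$: since $u$ is harmonic in $B(x,R/2)\subset B(x_0,R)$, I obtain $\sup_{B(x,\rho)}u-\inf_{B(x,\rho)}u \lesssim (\rho/R)^{\alpha}(\sup_{B(x,R/4)}u-\inf_{B(x,R/4)}u)$ for every $\rho\leq R/4$, with $\alpha=-\log_4\theta>0$. Taking $\rho$ of order $d(x,y)$ for $y\in B(x_0,R/2)$ with $d(x,y)\leq R/16$ (the opposite case is trivial) yields $|u(x)-u(y)|\lesssim (d(x,y)/R)^\alpha(\sup_{B(x,R/4)}u-\inf_{B(x,R/4)}u)$. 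To convert the sup-oscillation into the $L^2$-oscillation appearing in $(ER)$, I apply Proposition \ref{propMax} to the harmonic functions $\pm(u-c)$ with $c:=\aver{B(x_0,R)}u\,d\mu$, on the ball $B(x,R/2)$; combined with $V(x,R/2)\simeq V(x_0,R)$ (by doubling) and $B(x,R/2)\subset B(x_0,R)$, this gives $\sup_{B(x,R/4)}|u-c|\lesssim \osc_{B(x_0,R)}(u)$, which finishes the proof of $(ER)$.

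The main obstacle lies in the oscillation-decay step: it relies on Lemma \ref{lemmaDG}, which itself depends on the nontrivial self-improved Poincar\'e inequality $(\widetilde{P}_{2-\eps})$, and securing $\theta<1$ requires careful tracking of how the prefactor $i^{-\eps/4}$ coming from the measure estimate competes with the dyadic factor $2^{-i-1}$ produced by the choice of $k_i$. Once this balance is achieved, the remaining ingredients (iteration at a centre $x$, and conversion of sup- to $L^2$-oscillation via a second application of the mean value property) are routine.
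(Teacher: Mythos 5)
Your proof is correct and follows essentially the same route as the paper's: mean value property (Proposition~\ref{propMax}) combined with the level-set decay (Lemma~\ref{lemmaDG}) to obtain a sup-norm oscillation decay, then dyadic iteration to extract the H\"older exponent, and a final application of the mean value property to $\pm(u-c)$ (with $c$ the mean over $B(x_0,R)$) to convert the sup-oscillation into the $L^2$-oscillation on the right-hand side of \eqref{eq:ER}. The only differences are cosmetic: you run the decay directly at the scales $R$, $R/2$, $R/4$ and then explicitly recentre at an arbitrary $x\in B(x_0,R/2)$ before iterating, whereas the paper carries out the decay for a general $r\le R/2$ (so working at scales $2r$, $r$, $r/2$) and is terser in passing from oscillation decay at the centre $x_0$ to the pointwise H\"older bound at arbitrary $x,y$; your recentring step and the explicit treatment of the $\max_{B}|u-c|\lesssim\osc_{B(x_0,R)}(u)$ conversion make that last passage slightly cleaner, but the mathematical content is the same.
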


\begin{proof} Recall first that  \eqref{FK} holds according to Remark \ref{PFK}.
Fix $x_0 \in M$, $R >0$, and let $u \in \mathcal{F}$ be harmonic in $B(x_0,R)$. Let $r \in (0,R/2]$. 
By applying Proposition \ref{propMax} to $u-K$ for any $K\leq M(2R)$, one obtains
\begin{equation} M(R/2)-K  \lesssim (M(2R)-K) \left( \frac{a(K,R)}{V(x_0,R)}\right)^{1/2}. \label{eq:hi} \end{equation}
According to \eqref{eq:hi} applied in $B(x_0,r)$ with $K=k_i:=k_i(2r)=M(2r)-\frac{M(2r)-m(2r)}{2^{i+1}}$, there exists a constant $C$, independent of the main parameters, such that 
\[
	M(r/2) \leq k_i(2r) +C(M(2r)-k_i(2r)) \left(\frac{a(k_i,r)}{V(x_0,r)}\right)^{1/2}.
\]
Assume that $a(k_0,r) \leq \frac{1}{2}V(x_0,r)$, otherwise work with $-u$. 
Since $(P_2)$  implies $(\widetilde{P}_{2-\eps})$  as we already pointed out, we can apply Lemma \ref{lemmaDG}. This yields 
\[
	\frac{a(k_i,r)}{V(x_0,r)} \leq C i^{-\eps/2},
\]
therefore one can choose $i$ large enough so that
\[
	C \left(\frac{a(k_i,r)}{V(x_0,r)}\right)^{1/2} \leq \frac{1}{2}. 
\]
One obtains 
\[
	M(r/2) \leq M(2r)- \frac{1}{2^{i+2}}(M(2r)-m(2r)),
\]
hence
\[
	M(r/2) -m(r/2) \leq (M(2r)-m(2r)) \left(1-\frac{1}{2^{i+2}}\right).
\]
Set $\omega(r)=M(r)-m(r)$. One has 
\[
	\omega(r/2) \leq \eta \omega(2r), \qquad \forall r \in (0,R/2],
\]
where $\eta=1-\frac{1}{2^{i+2}} \in (0,1)$. It follows that there exist $C,\alpha >0$ such that
\[
	\omega(\rho) \leq C \left(\frac{\rho}{R}\right)^\alpha \omega(R/2), \qquad \forall \rho \in (0,R/2].
\]
In particular,
\[
	\abs{u(x)-u(y)} \leq C' \left(\frac{d(x,y)}{R}\right)^\alpha \max_{B(x_0,R/2)} \abs{u}, \qquad \forall x,y \in B(x_0,R/2).
\]
Now, it follows easily from Proposition \ref{propMax} that
\[
	\max_{B(x_0,R/2)} \abs{u} \lesssim 2\text{-}\osc_{B(x_0,R)}(u),
\]
hence the claim.
\end{proof}

We can now prove Proposition \ref{propDG}. This will be done in two steps: first \eqref{eq:ER} yields  a De Giorgi property for harmonic functions similar to  \eqref{eq:dg}. Second one derives the full $(DG_2)$ by classical $L^2$ techniques.

\begin{proof}[Proof of Proposition $\ref{propDG}$]
Consider a function $u \in \mathcal{F}$ harmonic in $B_R=B(x_0,R)$. By Proposition \ref{propER}, we have \eqref{eq:ER}, and we have seen at the beginning of the proof of Theorem $\ref{thm:2}$ that this implies 
$$ \osc_{B_r}(u) \lesssim \left(\frac{r}{R}\right)^{\alpha} \osc_{B_R}(u),$$
for $B_r=B(x_0,r)$ and $0<r\le R$.
Using  the Caccioppoli inequality \eqref{cac2} and \eqref{P2}, we  obtain
\begin{align} \label{eq:DG-harm}
	\left(\aver{B(x_0,r)} |\nabla u|^2\,d\mu\right)^{1/2}
	\lesssim \left(\frac{R}{r}\right)^{1-\alpha} \left(\aver{B(x_0,R)} |\nabla u|^2\,d\mu\right)^{1/2}
\end{align}
for $0<r\le R/2$.
If $R/2\leq r\leq R$ then the inequality still holds by \eqref{d}. 

We can now deduce ($DG_2$) as follows (cf. \cite[Theorem 1.1]{Gia0} or \cite[Theorem 3.6]{Aus}). Let $f\in {\mathcal D}$,  $x_0 \in M$, and $R>0$. As in the proof of Theorem \ref{thm:2},  from Lemma \ref{lem:LM} (since \eqref{FK} follows from ($P_2$) let consider $u\in {\mathcal F}$ be harmonic on $B(x_0,R)$ such that $f-u\in {\mathcal F}$ is supported in the ball $B(x_0,R)$.  From \eqref{def}, \eqref{eq:leibniz}, and \eqref{eq:carre}, we deduce
\begin{equation}\label{fillon}
\||\nabla u|\|_{L^2(B(x_0,R))} \lesssim \||\nabla f|\|_{L^2(B(x_0,R))} + R \|\mathcal{L} f\|_{L^2(B(x_0,R))}.
\end{equation}
Using triangle inequality, then \eqref{d} and \eqref{eq:DG-harm}, write, for $0<r\le R$,
\begin{eqnarray*}
 \left(\aver{B(x_0,r)} |\nabla f|^2 d\mu \right)^{1/2}&\le&  \left(\aver{B(x_0,r)} |\nabla u|^2 d\mu \right)^{1/2}+ \left(\aver{B(x_0,r)} |\nabla (f-u)|^2 d\mu \right)^{1/2}\\
 &\le&  \left(\aver{B(x_0,r)} |\nabla u|^2 d\mu \right)^{1/2}+ \left(\frac{R}{r}\right)^{\frac{\nu}{2}}\left(\aver{B(x_0,R)} |\nabla (f-u)|^2 d\mu \right)^{1/2}\\
 &\le& \left(\frac{R}{r}\right)^{1-\alpha} \left(\aver{B(x_0,R)} |\nabla u|^2\,d\mu\right)^{1/2}
+ \left(\frac{R}{r}\right)^{\frac{\nu}{2}}\left(\aver{B(x_0,R)} |\nabla (f-u)|^2 d\mu \right)^{1/2}
.\end{eqnarray*}
Since \eqref{FK} follows from our assumptions, we can use \eqref{sarko} and together with    \eqref{fillon} it follows   that 
\begin{eqnarray*}
\left(\aver{B(x_0,r)} |\nabla f|^2 d\mu \right)^{1/2} 
& \lesssim& \left(\frac{R}{r}\right)^{1-\alpha} \left[\left(\aver{B(x_0,R)} |\nabla f|^2 d\mu\right)^{1/2}+R \left(\aver{B(x_0,R)} |\mathcal{L}f|^2d\mu \right)^{1/2}\right]\\
&&+ \left(\frac{R}{r}\right)^{\nu/2}R \left(\aver{B(x_0,R)} |\mathcal{L}f|^2d\mu \right)^{1/2},
\end{eqnarray*}
hence
\begin{equation*}
\left(\aver{B(x_0,r)} |\nabla f|^2 d\mu \right)^{1/2} 
 \lesssim \left(\frac{R}{r}\right)^{1-\alpha} \left(\aver{B(x_0,R)} |\nabla f|^2 d\mu\right)^{1/2}+\left(\frac{R}{r}\right)^{\gamma}R \left(\aver{B(x_0,R)} |\mathcal{L}f|^2d\mu \right)^{1/2},
\end{equation*}
where $\gamma=\max\{1-\alpha,\frac{\nu}{2}\}$.
Applying  Lemma \ref{lemma-end} and since
$$ \sup_{r\leq R} \left(\aver{B(x_0,r)} |\mathcal{L}f|^2d\mu \right)^{1/2} \lesssim \|\mathcal{L}f \|_{L^\infty(B_R)},$$
 one obtains ($DG_{2,\epsilon}$) for every $\epsilon\in(1-\alpha,1)$.
\end{proof}


 \section{A  self-improving property for reverse H\"older inequalities}
  \label{App:ste}

In this appendix, we shall describe a general self-improving property of reverse H\"older inequalities which is used in Proposition \ref{prop}. These results are not new and already appeared in the literature (see  e.g. \cite[Theorem 2]{IN} and \cite[Subsection 3.38]{HKM}). We give a proof for the sake of completeness.
 
 Consider $(M,d,\mu)$ a doubling metric measure space.
 Let ${\mathcal Q}$ be the collection of all balls of the ambient space $M$, and consider a functional $a: {\mathcal Q} \to [0,\infty)$. We say that $a$ is regular  if there exists a constant $c>0$ such that for every pair of balls $B,\tilde B$ with $\tilde B \subset B \subset 2 \tilde B$  
 $$  c a(\tilde B) \leq  a(B)  \leq c^{-1} a(2\tilde B).$$

 \begin{theorem} \label{thm:imp} 
Let $1<p< q\leq + \infty$.  Consider a regular functional $a$.
 Let $\omega\in L^1_{\loc}(M,\mu)$ be a non-negative function such that for every ball $B\subset M$
\begin{equation} \left( \aver{B} \omega^ q \, d\mu \right)^{1/q} \lesssim \left( \aver{2B} \omega^ p \, d\mu \right)^{1/p} + a(B). \label{eq:a} 
\end{equation}
Then, for every $\eta\in(0,1)$ and every ball $B\subset M$,
 $$ \left( \aver{B} \omega^ q \, d\mu \right)^{1/q} \lesssim \left( \aver{2B} \omega^ {\eta p} \, d\mu \right)^ {1/(\eta p)} + a(2B). $$
In other words, the right-hand side exponent of a reverse H\"older inequality always self-improves.
 \end{theorem}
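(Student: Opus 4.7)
The plan is to interpolate by Hölder, use Young's inequality to separate, and close by a self-improvement iteration on nested concentric balls.

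\textbf{Step 1 (Hölder interpolation and Young).} Since $\eta p < p < q$, the identity $1/p = \theta/(\eta p) + (1-\theta)/q$ defines $\theta = \eta(q-p)/(q-\eta p) \in (0,1)$, and Hölder's inequality gives, on any ball $B'$,
$$\left(\aver{B'}\omega^p\, d\mu\right)^{1/p} \leq \left(\aver{B'}\omega^{\eta p}\, d\mu\right)^{\theta/(\eta p)}\left(\aver{B'}\omega^q\, d\mu\right)^{(1-\theta)/q}.$$
Plugging this into \eqref{eq:a} with $B' = 2B$ and abbreviating $M(B) := (\aver{B}\omega^q)^{1/q}$, $N(B) := (\aver{B}\omega^{\eta p})^{1/(\eta p)}$, one obtains
$$M(B) \leq C_0\bigl[N(2B)^\theta\, M(2B)^{1-\theta} + a(B)\bigr].$$
Young's inequality with conjugate exponents $1/(1-\theta)$ and $1/\theta$, applied to the product $M(2B)^{1-\theta}\cdot N(2B)^\theta$, then yields for every $\varepsilon \in (0,1)$
$$M(B) \leq C_0\varepsilon\, M(2B) + C_\varepsilon\bigl(N(2B) + a(B)\bigr),\qquad (\dagger)$$
and the regularity of $a$ allows us to replace $a(B)$ by $a(2B)$ at the cost of a constant.

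\textbf{Step 2 (absorption via nested-ball iteration).} Inequality $(\dagger)$ does not close on itself, since it couples $M$ at two different scales. To absorb the $M(2B)$ term, I would apply $(\dagger)$ to concentric balls of continuously varying radii $s \in [r_0, 2r_0]$ centred at $x_0$, where $B_0 = B(x_0, r_0)$ is the ball of interest. For $r_0 \leq s < t \leq 2r_0$, cover $B_s := B(x_0, s)$ by a Vitali family of balls $\{B(y_i, (t-s)/2)\}$ with centres in $B_s$ and bounded overlap, so that the doubles $B(y_i, t-s)$ all lie in $B_t := B(x_0, t) \subset 2B_0$. Applying $(\dagger)$ to each such ball, raising to the $q$-th power, multiplying by the ball's volume and summing via doubling and bounded overlap, one derives an inequality of the form
$$\Phi(s) \leq C\varepsilon\, \Phi(t) + C_\varepsilon\, K,\qquad r_0 \leq s < t \leq 2r_0,$$
where $\Phi(t) := \int_{B_t}\omega^q \, d\mu$ and $K := |B_0|\bigl(N(2B_0)^q + a(2B_0)^q\bigr)$, the regularity of $a$ being used to dominate the values $a(B(y_i, (t-s)/2))$ (for balls sitting inside $2B_0$) by $a(2B_0)$. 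Choosing $\varepsilon$ small enough that $C\varepsilon < 1$, a standard iteration lemma in the spirit of Lemma \ref{lemma-end} absorbs the $\Phi(t)$-term into the left-hand side and gives $\Phi(r_0) \lesssim K$, hence $M(B_0) \lesssim N(2B_0) + a(2B_0)$.

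\textbf{Main obstacle.} The principal difficulty is the absorption in Step 2. Since $M(B)$ and $M(2B)$ sit on different balls, Young's inequality alone cannot close the argument, and a naive dyadic iteration of $(\dagger)$ would involve $M(2^nB_0)$ as $n \to \infty$, for which no good a priori control is available. The covering-iteration at continuously varying scales bypasses this by producing enough geometric decay in $\varepsilon$ once the scales $s$ and $t$ are close, but the argument requires careful bookkeeping of the overlap constants produced by doubling, of the exponents produced by Hölder, and of the regularity of the functional $a$ on the covering balls.
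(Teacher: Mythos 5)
Your Step 1 is sound and in fact opens with the same Hölder interpolation as the paper: the exponent $\theta=\eta(q-p)/(q-\eta p)$ is precisely the $\delta$ that appears there. The divergence begins when you apply Young's inequality and then try to close the estimate by an absorption iteration over nested balls; the paper instead defines a normalized supremum
\[
K(\epsilon,\eta) := \sup_{B} \frac{\left(\aver{B}\omega^p\right)^{1/p}}{\left(\aver{2B}\omega^{\eta p}\right)^{1/(\eta p)}+a(2B)+\epsilon\left(\aver{2B}\omega^p\right)^{1/p}},
\]
notes that the artificial $\epsilon$-term makes $K(\epsilon,\eta)\leq\epsilon^{-1}<\infty$ a priori, covers $B$ by \emph{finitely many balls of comparable radius} $\ell(B_i)\simeq\ell(B)$ with $4B_i\subset 2B$, and obtains $K\lesssim K^{1-\delta}$. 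Since $K$ is finite and $\delta>0$, this self-improvement yields $K\lesssim 1$ uniformly in $\epsilon$, and one concludes by letting $\epsilon\to 0$. That route avoids entirely the absorption problem you struggle with.

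The absorption in your Step 2 has a genuine gap, and for two reasons both traceable to the fact that your covering balls $B_i=B(y_i,(t-s)/2)$ become \emph{arbitrarily small} as $t-s\to 0$. First, the hypothesis that $a$ is regular only relates $a$ on two balls $\tilde B\subset B\subset 2\tilde B$ of \emph{comparable} radius; it says nothing about comparing $a$ on a tiny ball inside $2B_0$ with $a(2B_0)$. For a general regular functional (for instance one with $a(B)\to\infty$ as $r(B)\to 0$, which is not excluded), the sum $\sum_i a(B_i)^q|B_i|$ has no reason to be controlled by $|B_0|\,a(2B_0)^q$. Second, and independently of $a$, the claim that $N(2B_i)=\left(\aver{2B_i}\omega^{\eta p}\right)^{1/(\eta p)}$ can be replaced by $N(2B_0)$ is false: local $L^{\eta p}$-averages on small balls are not dominated by the global $L^{\eta p}$-average, and after raising to the power $q$ and summing, the term $\sum_i N(2B_i)^q|B_i|$ is a genuinely higher moment of $\omega$ at scale $(t-s)$ which cannot be bounded by $|B_0|\,N(2B_0)^q$. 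Thus the quantity $K$ in your inequality $\Phi(s)\leq C\varepsilon\,\Phi(t)+C_\varepsilon K$ is in fact \emph{not} independent of $t-s$, and the iteration cannot close. The paper's argument is structured precisely so that all covering balls stay at a scale comparable to $\ell(B)$, which keeps both the regularity of $a$ and the doubling comparisons of the averages usable, and it replaces the absorption by a multiplicative self-improvement $K\lesssim K^{1-\delta}$ whose validity rests on the a priori qualitative finiteness of $K(\epsilon,\eta)$ provided by the $\epsilon$-regularization — an ingredient your proposal lacks.
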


\begin{proof}
Fix $\eta\in(0,1)$.
For every $\epsilon\in(0,1)$, consider
$$ K(\epsilon,\eta) := \sup_{B\in {\mathcal Q}} \frac{\left( \aver{B} \omega^p \, d\mu \right)^{1/p} }{ \left( \aver{2B} \omega^ {\eta p} \, d\mu \right)^ {1/(\eta p)}  + a(2B) +  \epsilon \left( \aver{2B} \omega^p \, d\mu \right)^{1/p}}.$$
It is  easy to  observe that $K(\epsilon,\eta)$ is finite and bounded by $\epsilon^{-1}$.
We claim that $K(\epsilon,\eta)$ is uniformly bounded, with respect to $\epsilon$.

Indeed, assume that $K(\epsilon,\eta)\geq 1$ (else there is nothing to prove) and take a ball $B\in {\mathcal Q}$. Consider $(B_i)_i$ a finite collection of balls which covers $B$ with $\ell(B_i)\simeq \ell(B)$ and $4B_i \subset 2B$.
Then
\begin{align*}
 \left( \aver{B} \omega^p \, d\mu \right)^{1/p}  \lesssim \sum_i \left( \aver{B_i} \omega^p \, d\mu \right)^{1/p} = \sum_i \left( \aver{B_i} (\omega^ \delta \omega ^{1-\delta})^p \, d\mu \right)^{1/p},
 \end{align*}
 for any $\delta\in[0,1]$.
Using H\"older's inequality with the particular choice $\delta =\frac{\eta(q-p)}{q-\eta p}$ gives us
\begin{align*}
\left( \aver{B_i} (\omega^ \delta \omega ^{1-\delta})^p \, d\mu \right)^{1/p} & \leq \left( \aver{B_i} \omega^ {\eta p} \, d\mu \right)^{\frac{\delta}{\eta p}} \left( \aver{B_i} \omega^q \, d\mu \right)^{(1-\delta)/q} ,
\end{align*}
since then
$$ \frac{1}{p} = \frac{\delta}{\eta p} + \frac{1-\delta}{q}.$$
Using \eqref{eq:a}, the assumption $K(\epsilon,\eta)\geq 1$ and the fact that $4B_i \subset 2B$, one obtains
\begin{align*}
& \left( \aver{B_i} (\omega^ \delta \omega ^{1-\delta})^p \, d\mu \right)^{1/p} 
 \lesssim \left( \aver{B_i} \omega^ {\eta p} \, d\mu \right)^{\frac{\delta}{\eta p}} \left[\left( \aver{2B_i} \omega^p \, d\mu \right)^{1/p}  + a(B_i) \right]^ {1-\delta}\\
&  \lesssim  \left( \aver{B_i} \omega^ {\eta p} \, d\mu \right)^{\frac{\delta}{\eta p}}  
K(\epsilon,\eta)^{1-\delta}\left[ \left( \aver{4B_i} \omega^ {\eta p} \, d\mu \right)^ {1/(\eta p)}  + \epsilon \left( \aver{4B_i} \omega^p \, d\mu \right)^{1/p}  + a(2B) \right]^{1-\delta}.
\end{align*}
By summing over the finite collection of balls $(B_i)$, we deduce that
\begin{align*}
&\left( \aver{B} \omega^p \, d\mu \right)^{1/p}\\ 
& \lesssim  \left(\aver{B} \omega^ {\eta p} \, d\mu \right)^{\frac{\delta}{\eta p}} 
K(\epsilon,\eta)^{1-\delta}  \left[ \left( \aver{2B} \omega^ {\eta p} \, d\mu \right)^ {1/(\eta p)}  + \epsilon \left( \aver{2B} \omega^p \, d\mu \right)^{1/p}  + a(2B) \right]^ {1-\delta} \\
& \lesssim  K(\epsilon,\eta)^{1-\delta}  \left[ \left( \aver{2B} \omega^ {\eta p} \, d\mu \right)^ {1/(\eta p)}  + \epsilon \left( \aver{2B} \omega^p \, d\mu \right)^{1/p} + a(2B)\right].
\end{align*}
Taking the supremum over all balls $B$ then yields
\begin{align*}
K(\epsilon,\eta)\lesssim K(\epsilon,\eta)^{1-\delta},
\end{align*}
which in turn yields, since $K(\epsilon,\eta)$ is finite  and $\delta>0$ due to $p<q$,
$$ K(\epsilon,\eta) \lesssim 1.$$
This last estimate is uniform with respect to $\epsilon$.
Hence, by letting $\epsilon \to 0$, we obtain the desired conclusion.
\end{proof}

 \DeactivateToc
\section*{Acknowledgements}
\ActivateToc

The first and the third author acknowledge financial support from the Hausdorff  Research Institute for Mathematics in Bonn where part of this work was done. All authors are very grateful to Steve Hofmann and Jos\'e Mar\'\i a Martell for a fruitful discussion where they have pointed out the results of Appendix \ref{App:ste}.  The second author would like to thank Estibalitz Durand-Cartagena for interesting discussions about Poincar\'e inequalities on metric spaces.


\begin{thebibliography}{AAA}

\bibitem{Aus}
P. Auscher,
\newblock Regularity theorems and heat kernel for elliptic operators, 
\newblock {\it J. London Math. Soc.},  \textbf{54} (1996), no. 2, 284--296.

\bibitem{AC2} P. Auscher and T. Coulhon, Gaussian lower bounds for random walks from elliptic regularity,
\newblock {\it Annales de l'I.H.P., proba. stat.,} \textbf{35} (1999), no. 5, 605--630.

\bibitem{AC} P. Auscher and T. Coulhon, Riesz transform 
on manifolds and Poincar\'{e} inequalities, \textit{Ann. Scuola Norm. Sup. 
Pisa}, \textbf{4} (2005), 531--555.

\bibitem{ACDH} P. Auscher, T. Coulhon, X. T. Duong, and S. Hofmann, 
 Riesz transform on manifolds and heat kernel regularity,
 \textit{Ann. Sci. Ecole Norm. Sup.}, \textbf{37} (2004), no. 4, 911--957.

\bibitem{AMcM}
P.~Auscher, A.~McIntosh, and A. Morris,
\newblock{Calder\'on reproducing formulas and applications to Hardy spaces},
\newblock http://arxiv.org/abs/1304.0168.



\bibitem{BGK} M. Barlow, A. Grigor'yan, and T. Kumagai,  
On the equivalence of parabolic Harnack inequalities and heat kernel estimates, 
{\it J. Math. Soc. Japan}, {\bf 64} (2012), no. 4, 1091--1146.

 \bibitem{BCF2} 
I. Benjamini, I. Chavel, and E. Feldman, Heat kernel lower bounds on Riemannian manifolds using the old ideas of Nash,
\newblock {\it Proc. London Math. Soc.}, (3) \textbf{72} (1996), no. 1, 215--240. 

%

\bibitem{Bou0}
S. Boutayeb,
From lower to upper estimates of heat kernels in doubling spaces, {\em Tbil. Math. J.}, \textbf{2} (2009), 61--76.

\bibitem{Boutayeb}
S. Boutayeb,
Heat kernel lower Gaussian estimates in the doubling setting without Poincar\'e inequality,
{\em Publ. Mat.}, \textbf{53} (2009), 457--479.

\bibitem{BCS} S. Boutayeb, T. Coulhon, and A. Sikora, 
\newblock{A new approach to pointwise  heat kernel upper bounds on doubling metric measure spaces},  preprint 2013, http://arxiv.org/abs/1311.0367.


\bibitem{CCO} G. Carron, T. Coulhon, and E. M. Ouhabaz,
Gaussian estimates and $L^p$-boundedness of Riesz means,
\newblock {\it J. of Evol. Equ.}, \textbf{2} (2002) 299--317.


\bibitem{Chen2}
L. Chen, 
\newblock Hardy spaces on metric measure spaces with generalized heat kernel estimates, preprint.


\bibitem{CLi} T. Coulhon, Espaces de Lipschitz et in\'egalit\'es de Poincar\'e, {\it J. Funct. Anal.}, 136 (1996),
81--113.

\bibitem{coupre}
T.~Coulhon,
\newblock Off-diagonal heat kernel lower bounds without Poincar\'e,
\newblock{\em J. London Math. Soc.}, \textbf{68} (2003), no. 3, 795--816.



\bibitem{CHQL} T. Coulhon and H. Q. Li,
Estimations inf\'erieures du noyau de la chaleur sur les vari\'et\'es coniques et transform\'ee de Riesz,
\newblock {\it Arch. Math. (Basel)}, \textbf{83} (2004), no. 3, 229--242.


 \bibitem{CS} T. ~Coulhon and A. ~Sikora,
\newblock{Gaussian heat kernel bounds via Phragm\'en-Lindel\"of theorem,}  
\newblock{\it Proc. London Math. Soc.}, \textbf{96} (2008),  507--544.
 
 \bibitem{CS2} T. Coulhon and A. Sikora, 
Riesz meets Sobolev, \textit{Colloq. Math.}, \textbf{118} (2010), 685--704. 

 \bibitem{CS3} T. Coulhon and A. Sikora, 
Geometric aspects of Nash and Gagliardo-Nirenberg inequalities, preprint. 

\bibitem{DeG}
E.  De Giorgi, 
Sulla differenziabilita de analiticita delle estremali degli integrali multipli regolari, 
\newblock {\it Mem. Accad. Sci. Torino Cl. Sci. Fis. Mat. Nat.}, \textbf{3}  (1957), no. 3, 25--43.

\bibitem{Del}  T. Delmotte, Parabolic Harnack inequality and estimates of Markov chains on graphs, {\it Rev. Mat. Iberoamericana}, \textbf{15} (1999), no. 1, 181--232.


\bibitem{DOS}
X.T. Duong, E.M. Ouhabaz, and A. Sikora,
\newblock Plancherel-type estimates and sharp spectral multipliers,
\newblock {\it J. Funct. Anal.},  \textbf{196} (2002), 443--485.

\bibitem{Du0} N. Dungey,  Heat kernel estimates and Riesz transforms on some Riemannian covering
manifolds,  {\em Math. Z.},   \textbf{247} (2004), no. 4,  765--794.

\bibitem{Du}
N. Dungey, Some remarks on gradient estimates for heat kernels,
\newblock {\it Abstr. Appl. Anal.}, (2006), Art. ID 73020.

\bibitem{DCJS}
E. Durand-Cartagena, J. Jaramillo, and N. Shanmugalingam, 
\newblock The $\infty$-Poincar\'e inequality in metric measure spaces, 
\newblock {\it Michigan Math. J.}, \textbf{61} (2012), no. 1, 63--85.

\bibitem{DCJS1}
E. Durand-Cartagena, J. Jaramillo, and N. Shanmugalingam, 
\newblock Geometric characterisations of $p$-Poincar\'e inequalities in the metric setting, 
preprint 2014, http://cvgmt.sns.it/paper/2345/.

\bibitem{FS} E. Fabes and D. Stroock, A new proof  of Moser's parabolic Harnack inequality
via the old ideas of Nash, {\em Arch. Rat. Mech. Anal.},  \textbf{96} (1986), 327-338.

\bibitem{FPW} B. Franchi, C. P\'{e}rez, and R.L. Wheeden,  Self-improving properties of John-Nirenberg and
Poincar\'{e} inequalities on space of homogeneous type,
\newblock {\it J. Funct. Anal.}, \textbf{153} (1998), no. 1, 108--146.

\bibitem{FOT} 
M. ~Fukushima, Y. ~Oshima, and M. ~Takeda, \newblock{\it Dirichlet forms and symmetric Markov processes}, de Gruyter Studies in Mathematics 19, 
 \newblock {Walter de Gruyter}, \newblock Berlin, 1994.


\bibitem{Gia0}
M. Giaquinta, {\it Multiple integrals in the calculus of variations and nonlinear elliptic systems}, Annals of Mathematics Studies, 105. Princeton University Press, Princeton, NJ, 1983.

\bibitem{Gia}
M. Giaquinta, {\it Introduction to regularity theory for nonlinear elliptic systems},
Birkh\"auser, 1993.


\bibitem{Grigo}  A.  Grigor'yan, On stochastically complete manifolds, {\it Soviet Math. Dokl.}, \textbf{34} (1987), no. 2, 310--313.

\bibitem{Gr0}  A.  Grigor'yan,  
The heat equation on non-compact Riemannian manifolds, in Russian: {\em Matem. Sbornik},
 182 (1991), no. 1, 55--87 ; english translation: {\em Math. USSR Sb.}, 72 (1992), no. 1,  47--77.

\bibitem{Gr1} 
A.~Grigor'yan, 
\newblock Gaussian upper bounds for the heat kernel on arbitrary manifolds,
\newblock {\it J. Diff. Geom.}, \textbf{45} (1997), 33--52.

\bibitem{GH} \textrm{A.~Grigor'yan and J.~Hu}, \newblock{Upper bounds of heat kernels on doubling spaces}, \newblock http://www.math.uni-bielefeld.de/~grigor/pubs.htm.

\bibitem{GSC}
P. Gyrya and L. Saloff-Coste,
{\it Neumann and Dirichlet heat kernels in inner uniform domains},
Ast{\'e}risque, \textbf{33} (2011), Soc. Math. France.

\bibitem{HaKo}
P. Hajlasz and P. Koskela, 
{\it Sobolev met Poincar\'e},  Memoirs of the A.M.S., \textbf{145} (2000), no. 688.

\bibitem{HS}
W. Hebisch and L. Saloff-Coste,
On the relation between elliptic and parabolic Harnack inequalities,
\newblock {\it Ann. Inst. Fourier}, \textbf{51} (2001), no. 5, 1437--1481.

\bibitem{HK} J. Heinonen and P. Koskela,  Quasiconformal maps in metric spaces with controlled geometry, {\it Acta Math.}, \textbf{181} (1998), 1--61.

\bibitem{HKM}
J. Heinonen, T. Kilpel\"ainen, and O. Martio
\newblock {\it Nonlinear Potential Theory of Degenerate Elliptic Equations,}
\newblock Oxford University Press, 1993.

\bibitem{HKim} S. Hofmann and S. Kim, 
\newblock The Green function estimates for strongly elliptic systems of second order,
\newblock {\it Manuscripta Mathematica}, \textbf{124} (2007), 139--172.

\bibitem{IN}
T. Iwaniec and C.A. Nolder, 
\newblock Hardy-Littlewood inequality for quasiregular mappings in certain domains in ${\mathbb R}^n$,
\newblock {\it Ann. Acad. Sci. Fenn. Ser. A I Math.} \textbf{10} (1985), 267--282.

\bibitem{Iw} T. Iwaniec, The Gehring lemma, in {\em Quasiconformal mappings and 
analysis (Ann Arbor, MI, 1995)}, 181--204, Springer, New York, 1998. 

\bibitem{KZ}
S. Keith and X. Zhong,
\newblock The Poincar\'e inequality is an open ended condition,
\newblock {\it Annals of Math.}, \textbf{167} (2008), 575--599.

\bibitem{LY} P. Li and S. T. Yau, On the parabolic kernel of the
Schr\"odinger operator, {\it Acta Math.}, 156, 153-201, 1986.

\bibitem{HQL1}  H. Q. Li, 
\newblock La transformation de Riesz sur les vari\'et\'es coniques,
\newblock {\it J. Func. Anal.}, \textbf{168} (1999), 145--238.

\bibitem{HQL2}  H. Q. Li, Estimations du noyau de la chaleur sur les vari\'et\'es coniques et ses applications, 
\newblock {\it Bull. Sci. Math.}, \textbf{124} (2000), 365--384.


\bibitem{meyers}
N. G. Meyers, Mean oscillation over cubes and H\"older continuity,
\newblock {\it Proc. Amer. Math. Soc.} \textbf{15} (1964), 717--721.

\bibitem{mosco} U. Mosco, Composite media and asymptotic Dirichlet forms, {\it J. Funct. Anal.}, 123 (1994),
368--421.


\bibitem{Moser}
 J. Moser, On Harnack theorem for elliptic differential equations,
{\it Comm. Pure Appl. Math.}, \textbf{14} (1961) 577--591.

\bibitem{saloffpoly}  L. Saloff-Coste, Analyse sur les groupes de Lie \`a
croissance polynomiale, {\it Ark. Mat.}, \textbf{28} (1990), 315--331.

\bibitem{S} L. ~Saloff-Coste, A note on Poincar\'e, Sobolev, and Harnack inequalities, {\it Int. Math. Res. Not.}, (1992), no. 2,  27--38.

\bibitem{SU} L. ~Saloff-Coste, Uniformly elliptic operators on Riemannian manifolds, {\it J. Diff. Geom.}, \textbf{36} (1992), 417--450.

\bibitem{parma} L. Saloff-Coste, Parabolic Harnack inequality
for divergence form second order differential operators,
{\em Pot. Anal.}, \textbf{4} (1995), no. 4, 429--467.

\bibitem{SA} L. ~Saloff-Coste,  {\it Aspects of Sobolev-type inequalities},
London Math. Soc. Lecture Note Series 289,
Cambridge University Press, 2002.

\bibitem{Sikora}
A. Sikora,
\newblock  Riesz transform, Gaussian bounds and the method of wave equation.
\newblock {\it Math. Z.} \textbf{247} (2004), no. 3, 643--662.

\bibitem{topics} E. M. Stein, {\it Topics in harmonic analysis related to
the
Littlewood-Paley theory}, Princeton University Press, 1970.

\bibitem{ST1} K.T. Sturm, Analysis on local Dirichlet spaces I. Recurrence, conservativeness  and $L^p$-Liouville property, {\it  J. Reine Angew. Math.}, \textbf{456} (1994), 173--196.

\bibitem{ST2} K.T. Sturm, Analysis on local Dirichlet spaces II. Upper Gaussian estimates for the fundamental solutions of parabolic equations, {\it Osaka J. Math.}, \textbf{32} (1995), no. 2, 275--312.

\bibitem{ST3} K.T. Sturm, Analysis on local Dirichlet spaces III. The parabolic Harnack inequality, {\it J. Math. Pures Appl.}, \textbf{75} (1996), 273--297.




\end{thebibliography}
\end{document}